\definecolor{darkgoldenrod}{rgb}{0.12, 0.28, 0.59}
\DeclareFontFamily{U}{wncy}{}
\DeclareFontShape{U}{wncy}{m}{n}{<->wncyr10}{}
\DeclareSymbolFont{mcy}{U}{wncy}{m}{n}
\DeclareMathSymbol{\Sh}{\mathord}{mcy}{"58}
\newcommand{\gq}{{\mathfrak{q}}}
\newcommand{\gp}{{\mathfrak{p}}}
\newcommand{\Gal}{{\mathrm{Gal}}}
\newcommand{\Sel}{{\mathrm{Sel}}}
\newcommand{\Hom}{\mathrm{Hom}}
\newcommand{\ord}{\mathrm{ord}}
\newcommand{\Q}{{\mathbf Q}}
\newcommand{\Z}{{\mathbf Z}}
\newcommand{\R}{{\mathbf R}}
\newcommand{\QQ}{\mathbf{Q}}
\newcommand{\ZZ}{\mathbf{Z}}
\newcommand{\Qp}{\mathbf{Q}_p}
\newcommand{\Zp}{\mathbf{Z}_p}
\definecolor{Green}{rgb}{0.0, 0.5, 0.0}
\theoremstyle{definition}
\newtheorem{lemma}{Lemma}[section]
\newtheorem{definition}[lemma]{Definition}
\newtheorem{remark}[lemma]{Remark}
\newtheorem{corollary}[lemma]{Corollary}
\newtheorem{proposition}[lemma]{Proposition}
\newtheorem{conjecture}[lemma]{Conjecture}
\newtheorem{theorem}[lemma]{Theorem}
\title[]{Refined conjectures on Fitting ideals of Selmer groups over $\mathbf{Z}_p^2$-extensions}
\author[Cédric Dion]{Cédric Dion\,\orcidlink{https://orcid.org/0000-0002-5593-7824}}
\address[C.~Dion]{D\'epartement de Math\'ematiques et de Statistique, Universit\'e Laval, Pavillion Alexandre-Vachon, 1045 Avenue de la M\'edecine, Qu\'ebec, QC, Canada G1V 0A6
}
\email{cedric.dion.1@ulaval.ca}
\date{}
\begin{document}

\subjclass[2020]{Primary: 11R23; Secondary: 11G05, 11R20}
\keywords{Elliptic curves, Iwasawa theory, Mazur--Tate elements}

\begin{abstract}
Let $p>3$ be a prime number and $K$ be an imaginary quadratic field where $p$ splits. Let $K_\infty$ be the $\Zp^2$-extension of $K$ and let $K_n$ be a finite subextension of $K_\infty/K$. Let $E$ be an elliptic curve with good ordinary reduction at $p$. Under some hypotheses, we show that the Mazur--Tate element attached to $E$ over $K_n$ by S. Haran generates the Fitting ideal of the dual Selmer group of $E$ over $K_n$.
\end{abstract}

\maketitle

\section{Introduction}

Let $E$ be elliptic curve with good ordinary reduction at an odd prime $p$ and consider $\QQ_\infty$, the cyclotomic $\Zp$-extension of $\QQ$. Let $\Sigma$ be a finite set of places of $\QQ$ including $p$, $\infty$ and the bad reduction primes of $E$. If $\QQ^{(n)}/\QQ$ is a finite subextension inside $\QQ_\infty$, we attach to $E$ its Selmer group over $\QQ^{(n)}$ defined by the kernel
$$
\Sel(\QQ^{(n)},E[p^\infty]) \colonequals \ker \left( H^1(\QQ_\Sigma/\QQ^{(n)},E[p^\infty]) \to \prod_{v}\frac{H^1(\QQ_v^{(n)},E[p^\infty])}{E(\QQ_v^{(n)})\otimes \Qp/\Zp} \right)
$$
where $\QQ_\Sigma$ is the maximal extension of $\QQ$ unramified outside $\Sigma$, $v$ runs over all the finite places of $\QQ^{(n)}$ above the places in $\Sigma$ and $\QQ_v^{(n)}$ is the completion of $\QQ^{(n)}$ at $v$. On the analytic side, let $f$ be the modular form attached to $E$ by modularity. Denote by $\mu_{p^n}$ the set of $p^n$-th roots of unity and consider the Galois group $G^\prime_{n+1} \colonequals \Gal(\QQ(\mu_{p^{n+1}})/\QQ)/\{\pm 1 \} \cong (\ZZ/p^{n+1}\ZZ)^\times  /\{\pm 1 \}$. Let $\sigma_a$ denote the element corresponding to $a \in (\ZZ/p^{n+1}\ZZ)^\times  /\{\pm 1 \}$. Then, one can define an element in the group ring $\Zp[G^\prime_{n+1}]$ by the formula
$$
\theta^\prime_{n+1}(f) \colonequals \sum_{a \in (\ZZ/p^{n+1}\ZZ)^\times  /\{\pm 1 \}} \left[ \frac{a}{p^{n+1}} \right]^+ \cdot \sigma_a 
$$
where $\left[ \frac{a}{b} \right]^+$ is a modular symbol scaled by an appropriate choice of period. The Mazur--Tate element $\theta_n(f)$ of $f$ at $\QQ^{(n)}$ is defined to be the image of $\theta^\prime_{n+1}(f)$ inside $\Lambda_n \colonequals \Zp[\Gal(\QQ^{(n)}/\QQ)]$. While the Iwasawa main conjecture relates the Pontryagin dual of the Selmer group of $E$ over $\QQ_\infty$ with the $p$-adic $L$-function of $E$, refined Iwasawa theory aims to relate algebraic and analytic objects over $\QQ^{(n)}$. If $M$ is a finitely presented $\Lambda_n$-module, one studies its Fitting ideal $\mathrm{Fitt}_{\Lambda_n}(M)$ (see section \ref{sec:Fitting} for the definition). Mazur and Tate \cite{MT87} gave the following conjecture:

\begin{conjecture}[Weak Mazur--Tate conjecture]\label{conj:weak}
Assume that $a_p \not\equiv 1 \bmod p$, then
$$
\theta_n(f) \in \mathrm{Fitt}_{\Lambda_n}\left( \Hom_{\Zp}(\Sel(\QQ^{(n)},E[p^\infty]), \Qp/\Zp) \right).
$$
\end{conjecture}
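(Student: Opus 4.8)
The plan is to derive the statement from two deep results over the cyclotomic line --- Kato's divisibility in the Iwasawa main conjecture, and Greenberg's structure theory for dual Selmer groups --- together with the fact that Fitting ideals commute with base change. Write $\Lambda=\Zp[[\Gal(\QQ_\infty/\QQ)]]$ and set $\mathfrak{X}_\infty=\Hom_{\Zp}(\Sel(\QQ_\infty,E[p^\infty]),\Qp/\Zp)$, with $\mathfrak{X}_n$ the analogous module over $\QQ^{(n)}$, so that $\mathrm{Fitt}_{\Lambda_n}(\mathfrak{X}_n)$ is the ideal in the conjecture. The first input is the (essentially formal) interpolation property linking $\theta_n(f)$ to the Mazur--Swinnerton-Dyer $p$-adic $L$-function: since $f$ is ordinary, $\mathcal{L}_p(f)$ lies in $\Lambda$, and because the modular-symbol sum defining $\theta^\prime_{n+1}(f)$ ranges exactly over $(\ZZ/p^{n+1}\ZZ)^\times/\{\pm1\}$, the image of $\mathcal{L}_p(f)$ under $\Lambda\twoheadrightarrow\Lambda_n$ equals $\theta_n(f)$ up to a unit of $\Zp$ (a power of the unit root $\alpha$ of $X^2-a_pX+p$). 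The second input is Kato's Euler-system bound: under the running hypothesis together with a standard auxiliary one, such as irreducibility of $E[p]$ as a $G_\QQ$-module, $\mathfrak{X}_\infty$ is $\Lambda$-torsion and $\mathcal{L}_p(f)\in\mathrm{char}_\Lambda(\mathfrak{X}_\infty)$.

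Granting these, here is the descent. First I would invoke Greenberg's theorem, by which $\mathfrak{X}_\infty$ has no nonzero finite $\Lambda$-submodule; since $\Lambda$ is a two-dimensional regular local ring this forces $\mathrm{pd}_\Lambda\mathfrak{X}_\infty\le1$, and for a torsion module of projective dimension at most one the Fitting ideal coincides with the characteristic ideal, so $\mathcal{L}_p(f)\in\mathrm{Fitt}_\Lambda(\mathfrak{X}_\infty)$. Next, because Fitting ideals commute with base change, $\mathrm{Fitt}_{\Lambda_n}(\mathfrak{X}_\infty\otimes_\Lambda\Lambda_n)=\mathrm{Fitt}_\Lambda(\mathfrak{X}_\infty)\cdot\Lambda_n$ contains the image of $\mathcal{L}_p(f)$, hence $\theta_n(f)$. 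Finally, since $a_p\not\equiv1\bmod p$, Mazur's control map is an isomorphism $\Sel(\QQ^{(n)},E[p^\infty])\xrightarrow{\sim}\Sel(\QQ_\infty,E[p^\infty])^{\Gamma_n}$ --- its potential finite kernel, of the shape $H^1(\Gamma_n,E(\QQ_\infty)[p^\infty])$, vanishing under the mild extra hypothesis $E(\QQ_\infty)[p]=0$ --- and dualizing gives $\mathfrak{X}_n\cong(\mathfrak{X}_\infty)_{\Gamma_n}=\mathfrak{X}_\infty\otimes_\Lambda\Lambda_n$; therefore $\theta_n(f)\in\mathrm{Fitt}_{\Lambda_n}(\mathfrak{X}_n)$. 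If I wanted to avoid relying on the structure theory for the primitive Selmer group, I would instead run this argument with the $\Sigma$-imprimitive Selmer group, whose dual $\mathfrak{X}_\infty^\Sigma$ is unconditionally free of nonzero finite $\Lambda$-submodules, obtain $\theta_n^\Sigma(f)\in\mathrm{Fitt}_{\Lambda_n}(\mathfrak{X}_n^\Sigma)$, and transfer back along the exact sequence $0\to\bigoplus_{v\in\Sigma\setminus\{p,\infty\}}\mathcal{H}_{n,v}^\vee\to\mathfrak{X}_n^\Sigma\to\mathfrak{X}_n\to0$, using $\theta_n^\Sigma(f)=\bigl(\prod_v P_v(\sigma_v)\bigr)\theta_n(f)$ and $\mathrm{Fitt}_{\Lambda_n}\!\bigl(\bigoplus_v\mathcal{H}_{n,v}^\vee\bigr)=\bigl(\prod_v P_v(\sigma_v)\bigr)$, where $P_v$ and $\sigma_v$ are the Euler factor and the Frobenius at $v$.

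I expect the deep inputs of Kato and Greenberg to be invoked as black boxes; the real work, and the main obstacle, is the descent, where the gap between Fitting ideals and characteristic ideals becomes essential. Because Fitting ideals are unstable under passing to a quotient by a finite submodule, and under finite kernels or cokernels, the argument genuinely needs both the \emph{exact} control isomorphism --- which is precisely why $a_p\not\equiv1\bmod p$ is imposed, since it kills the local cohomology at the prime above $p$ that governs the cokernel of control, while $E(\QQ_\infty)[p]=0$ kills the kernel --- and the projective-dimension-one property, i.e.\ the absence of nonzero finite $\Lambda$-submodules, which is exactly the content of Greenberg's theorem. In the imprimitive-to-primitive transfer the delicate point is the multiplicativity $\mathrm{Fitt}_{\Lambda_n}(\mathfrak{X}_n^\Sigma)=\mathrm{Fitt}_{\Lambda_n}\!\bigl(\bigoplus_v\mathcal{H}_{n,v}^\vee\bigr)\cdot\mathrm{Fitt}_{\Lambda_n}(\mathfrak{X}_n)$: this fails for general short exact sequences and here rests on the local modules $\mathcal{H}_{n,v}^\vee$ having projective dimension at most one over $\Lambda_n$ --- automatic at primes of multiplicative reduction, and needing a short local computation otherwise --- after which one may cancel $\prod_v P_v(\sigma_v)$, legitimate since every $P_v(\sigma_v)$ is a non-zero-divisor in $\Lambda_n$: its image under any character of $\Gal(\QQ^{(n)}/\QQ)$ is a value of the local $L$-factor at a root of unity, which never vanishes.
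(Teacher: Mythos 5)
The statement you are proving is a \emph{conjecture} in this paper: the author gives no proof of it, and in this generality (only $a_p\not\equiv 1\bmod p$) none is known. The paper only records that the \emph{strong} form was proved by Kim--Kurihara under substantially stronger hypotheses (surjectivity of $\overline{\rho}$, $p\nmid \mathrm{Tam}(E)$, and the validity of the cyclotomic Iwasawa main conjecture). Your proposal is essentially a sketch of that Kim--Kurihara/Kurihara descent strategy, so it cannot establish the conjecture as stated: Kato's divisibility needs a large-image (or at least irreducibility-type) hypothesis, Greenberg's no-finite-submodule/projective-dimension-one input and your local Fitting computation $\mathrm{Fitt}_{\Lambda_n}(\bigoplus_v\mathcal{H}_{n,v}^\vee)=(\prod_v P_v(\sigma_v))$ at bad primes require Tamagawa-type conditions ($p\nmid c_v$, or triviality of local torsion at bad $v$), and the control theorem you invoke is an isomorphism only when the local terms at the \emph{bad} primes also vanish --- $a_p\not\equiv 1\bmod p$ only kills the contribution at $p$; compare the hypotheses ``$E(K_v)[p]$ trivial for all bad $v$'' in Proposition \ref{prop:Control} of this paper. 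None of these hypotheses appear in the conjecture, so at best you would obtain a conditional theorem of Kim--Kurihara type, not the conjecture.

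There is also a concrete mathematical gap independent of hypotheses: the claim that the image of the Mazur--Swinnerton-Dyer $p$-adic $L$-function under $\Lambda\twoheadrightarrow\Lambda_n$ equals $\theta_n(f)$ up to a power of $\alpha$ is false. That image is the \emph{$\alpha$-stabilized} Mazur--Tate element, which differs from $\theta_n(f)$ by a correction term involving the trace of the level-$(n-1)$ element (this is forced by the norm relation $a_p\theta_n-\nu(\theta_{n-1})$). Showing that the stabilized element and $\theta_n(f)$ generate the same ideal of $\Lambda_n$ is a genuine step, carried out by an induction in which $a_p\not\equiv 1\bmod p$ (equivalently, invertibility of the relevant correction factors) is used; the two-variable analogue occupies Lemma \ref{lem:theta1}, Proposition \ref{prop:B(n,m)} and Theorem \ref{thm:stabilized} of this paper, and the one-variable case is likewise a nontrivial lemma in Kim--Kurihara. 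As written, your descent step ``hence contains $\theta_n(f)$'' does not follow. If you add the missing hypotheses and this stabilization comparison, your outline does reproduce the known conditional proof of (in fact) the strong conjecture; but it is not a proof of Conjecture \ref{conj:weak} as stated, and the paper itself offers none.
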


A stronger form of this conjecture formulated by Kurihara \cite{Kur02} asserts that $\theta_n(f)$ in fact generates the whole Fitting ideal.

\begin{conjecture}[Strong Mazur--Tate conjecture]\label{conj:strong}
    Assume that $a_p \not\equiv 1 \bmod p$ and that $p$ does not divide the Tamagawa number of $E$, then
    $$
    (\theta_n(f)) = \mathrm{Fitt}_{\Lambda_n}\left( \Hom_{\Zp}(\Sel(\QQ^{(n)},E[p^\infty]), \Qp/\Zp) \right).
    $$
\end{conjecture}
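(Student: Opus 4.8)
The plan is to reduce the strong conjecture to the Iwasawa Main Conjecture for $E/\QQ$ together with a descent argument along the cyclotomic tower. Write $\Lambda \colonequals \Zp[[\Gal(\QQ_\infty/\QQ)]] \cong \Zp[[T]]$, let $\pi_n\colon \Lambda \twoheadrightarrow \Lambda_n$ be the natural projection and $\omega_n \colonequals (1+T)^{p^n}-1$ a generator of $\ker\pi_n$, and set $X_\infty \colonequals \Hom_{\Zp}(\Sel(\QQ_\infty,E[p^\infty]),\Qp/\Zp)$ and $X_n \colonequals \Hom_{\Zp}(\Sel(\QQ^{(n)},E[p^\infty]),\Qp/\Zp)$. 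The first input is the classical interpolation property of Mazur--Tate elements: in the ordinary setting (where the $U_p$-eigenvalue is a $p$-adic unit) the $\theta_n(f)$ are, up to units, the images under $\pi_n$ of the Mazur--Tate--Teitelbaum $p$-adic $L$-function $L_p(f) \in \Lambda$ attached to the ordinary $p$-stabilization of $f$, so $(\theta_n(f)) = (\pi_n(L_p(f)))$ as ideals of $\Lambda_n$. It therefore suffices to prove $\mathrm{Fitt}_\Lambda(X_\infty) = (L_p(f))$ and $\pi_n(\mathrm{Fitt}_\Lambda(X_\infty)) = \mathrm{Fitt}_{\Lambda_n}(X_n)$.

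For the first equality I would appeal to the Iwasawa Main Conjecture: Kato's Euler system gives the divisibility $\mathrm{char}_\Lambda(X_\infty) \mid (L_p(f))$, and under the Skinner--Urban hypotheses on the residual representation the opposite divisibility holds, so $\mathrm{char}_\Lambda(X_\infty) = (L_p(f))$; in particular $X_\infty$ is $\Lambda$-torsion. To promote $\mathrm{char}$ to $\mathrm{Fitt}$ I would use that $X_\infty$ has no nonzero finite $\Lambda$-submodule — a consequence of the weak Leopoldt conjecture for the cyclotomic $\Zp$-extension. Since $\Lambda$ is a two-dimensional regular local ring, a torsion module with no finite submodule has projective dimension at most $1$, hence admits a presentation by a square matrix; the determinant of that matrix generates both $\mathrm{char}_\Lambda(X_\infty)$ and $\mathrm{Fitt}_\Lambda(X_\infty)$, giving $\mathrm{Fitt}_\Lambda(X_\infty) = (L_p(f))$.

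For the second equality, Fitting ideals commute with base change, so $\pi_n(\mathrm{Fitt}_\Lambda(X_\infty)) = \mathrm{Fitt}_{\Lambda_n}(X_\infty \otimes_\Lambda \Lambda_n) = \mathrm{Fitt}_{\Lambda_n}(X_\infty/\omega_n X_\infty)$, and it remains to identify $X_\infty/\omega_n X_\infty$ with $X_n$. This is where Mazur's control theorem enters: the restriction map $\Sel(\QQ^{(n)},E[p^\infty]) \to \Sel(\QQ_\infty,E[p^\infty])^{\Gal(\QQ_\infty/\QQ^{(n)})}$ has kernel and cokernel expressible through the local Kummer quotients, and the hypotheses $a_p \not\equiv 1 \bmod p$ and $p \nmid \prod_\ell c_\ell(E)$ are exactly what is needed to kill all of them, so the map is an isomorphism; dualizing and using once more the absence of finite submodules in $X_\infty$ (to pass from the dual of the $\Gal(\QQ_\infty/\QQ^{(n)})$-invariants to the $\omega_n$-coinvariants) yields $X_\infty/\omega_n X_\infty \cong X_n$. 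Concatenating the three identities gives $(\theta_n(f)) = \mathrm{Fitt}_{\Lambda_n}(X_n)$.

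The hardest part will be the equality case of the Iwasawa Main Conjecture: Kato's divisibility alone yields only the weak conjecture (through a one-sided inclusion of Fitting ideals), whereas the reverse divisibility of Skinner--Urban is deep and comes with its own hypotheses, which is why the strong conjecture is, as stated, still conditional. A secondary, more technical difficulty is to make the control theorem an honest isomorphism rather than a pseudo-isomorphism — this is precisely where the Tamagawa and $a_p \not\equiv 1$ conditions are consumed, and where one needs to know a priori that $X_\infty$ has no nonzero finite submodule; absent these one obtains the equality of Fitting ideals only up to a bounded correction. One should also keep in mind that $\Lambda_n = \Zp[\Gal(\QQ^{(n)}/\QQ)]$ is not regular, so $\mathrm{Fitt}_{\Lambda_n}(X_n)$ genuinely has to be computed by descent from $\Lambda$ rather than intrinsically at finite level. (Alternatively, one can follow Kurihara's inductive method, propagating the equality up the tower via the norm relations satisfied by the $\theta_n(f)$ and an explicit analysis at each layer, but this still ultimately rests on comparably deep inputs.)
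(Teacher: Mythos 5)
First, a point of status: the paper does not prove this statement — it is recorded as a conjecture (due to Kurihara), and the paper only quotes the Kim--Kurihara theorem establishing it under \emph{additional} hypotheses (surjectivity of $\overline{\rho}$ in the non-CM case and the validity of the cyclotomic Iwasawa main conjecture). Your outline is exactly that strategy, and it parallels the paper's own two-variable argument (main conjecture, promotion of characteristic ideals to Fitting ideals, descent of Fitting ideals along the tower, control theorem, comparison of Mazur--Tate elements with their $p$-stabilizations). So the skeleton is the right one, but, as you concede, it can only deliver a theorem conditional on the main conjecture and on extra big-image-type hypotheses, not the conjecture under its stated hypotheses alone.

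The concrete gap is in your very first step, where you assert that $(\theta_n(f)) = (\pi_n(L_p(f)))$ ``up to units''. This is not a formal consequence of the interpolation property: the image of the one-variable $p$-adic $L$-function in $\Lambda_n$ is, up to the unit $\alpha^{-(n+1)}$, the \emph{$p$-stabilized} element $\theta_n(f) - \alpha^{-1}\nu_{n,n-1}(\theta_{n-1}(f))$, not $\theta_n(f)$ itself. Converting the resulting equality of ideals for the stabilized element into one for $\theta_n(f)$ is a genuine step: one proves by induction on $n$ that the two differ by a unit of $\Lambda_n$, using that $\nu_{n,n-1}$ is essentially multiplication by $\Phi_n(1+T)=\omega_n/\omega_{n-1}$, whose constant term is $p$, together with the unit-ness of $1-\alpha^{-1}$ at the bottom layer — which is precisely where $a_p \not\equiv 1 \bmod p$ is consumed on the analytic side. (In the two-variable setting this is the content of Lemma \ref{lem:theta1}, Proposition \ref{prop:B(n,m)} and Theorem \ref{thm:stabilized}, to which a whole section is devoted.) Without this step your chain of identities only computes $\mathrm{Fitt}_{\Lambda_n}$ of the dual Selmer group as generated by the stabilized element. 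Two smaller inaccuracies: the absence of nonzero finite $\Lambda$-submodules in $X_\infty$ is not a consequence of weak Leopoldt — it is Greenberg's theorem for cotorsion Selmer groups of good ordinary curves (or, as in the paper, falls out of Nekov\'a\v{r}'s Selmer-complex machinery, where the vanishing of local Tamagawa factors, i.e.\ the hypotheses of the type $p\nmid \mathrm{Tam}(E)$ and triviality of local $p$-torsion at bad primes, is exactly what is needed); and passing from the Pontryagin dual of the $\Gal(\QQ_\infty/\QQ^{(n)})$-invariants to the $\omega_n$-coinvariants of $X_\infty$ is automatic from duality and does not use the finite-submodule input.
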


The strong form of the Mazur--Tate conjecture was proven by Kim and Kurihara in \cite{KK21}.

\begin{theorem}[Kim-Kurihara, Strong Mazur--Tate conjecture]
    Let $E$ be an elliptic curve over $\QQ$ with good ordinary reduction at an odd prime $p$. Let $\overline{\rho}$ be the residual Galois representation attached to $E$. Assume that $\overline{\rho}$ is surjective if $E$ is non-CM. Assume that $a_p(E) \not\equiv 1 \bmod p$ and that $p$ does not divide the Tamagawa number of $E$. Further assume that the Iwasawa main conjecture over $\QQ_\infty$ holds. Then,
    $$
    (\theta_n(f)) = \mathrm{Fitt}_{\Lambda_n}\left( \Hom_{\Zp}(\Sel(\QQ^{(n)},E[p^\infty]), \Qp/\Zp) \right).
    $$
\end{theorem}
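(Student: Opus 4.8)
The plan is to compute $\mathrm{Fitt}_{\Lambda_n}\!\big(\Hom_{\Zp}(\Sel(\QQ^{(n)},E[p^\infty]),\Qp/\Zp)\big)$ exactly, by descending a computation from the cyclotomic $\Zp$-extension to finite level. Write $\Gamma=\Gal(\QQ_\infty/\QQ)$, $\Lambda=\Zp[[\Gamma]]\cong\Zp[[T]]$, $\Gamma_n=\Gal(\QQ_\infty/\QQ^{(n)})$, $\omega_n=(1+T)^{p^n}-1$, and let $X=\Hom_{\Zp}(\Sel(\QQ_\infty,E[p^\infty]),\Qp/\Zp)$ and $X_n=\Hom_{\Zp}(\Sel(\QQ^{(n)},E[p^\infty]),\Qp/\Zp)$ be the dual Selmer groups over $\QQ_\infty$ and $\QQ^{(n)}$. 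The argument assembles four ingredients: a control theorem, the $\Lambda$-module structure of $X$, the Iwasawa main conjecture over $\QQ_\infty$, and the relation between $L_p(f)$ and the family $\theta_n(f)$.

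\textbf{Step 1: control.} First I would invoke Mazur's control theorem and check that, under the running hypotheses, it is an \emph{exact} isomorphism $\Sel(\QQ^{(n)},E[p^\infty])\xrightarrow{\sim}\Sel(\QQ_\infty,E[p^\infty])^{\Gamma_n}$. Irreducibility of $\overline\rho$ (a consequence of surjectivity, and known in the CM case) gives $E[p^\infty]^{G_{\QQ_\infty}}=0$, so the global error term vanishes; the hypothesis $a_p\not\equiv 1\bmod p$ is equivalent to $p\nmid\#\widetilde{E}(\mathbb{F}_p)=p+1-a_p$, which together with good ordinary reduction kills the local error terms at the primes above $p$; and $p\nmid c_\ell(E)$ kills the error terms at the bad primes $\ell\ne p$. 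Dualizing yields an isomorphism of $\Lambda_n$-modules $X_n\cong X_{\Gamma_n}=X/\omega_n X$. Exactness matters here, since Fitting ideals are not pseudo-isomorphism invariants.

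\textbf{Step 2: structure of $X$ and the main conjecture.} By Kato's theorem, $X$ is a torsion $\Lambda$-module; by Greenberg's theorem (using irreducibility of $\overline\rho$, and a separate argument in the CM case) $X$ has no nonzero finite $\Lambda$-submodule, and over the two-dimensional regular local ring $\Lambda$ this forces $\mathrm{pd}_\Lambda X\le 1$. Hence $X$ has a free resolution $0\to\Lambda^r\xrightarrow{M}\Lambda^r\to X\to 0$ with a square presentation matrix (the ranks agree because $X$ is torsion), so $\mathrm{Fitt}_\Lambda(X)=(\det M)=\mathrm{char}_\Lambda(X)$ is principal. By the Iwasawa main conjecture over $\QQ_\infty$ (assumed), $\mathrm{Fitt}_\Lambda(X)=\mathrm{char}_\Lambda(X)=(L_p(f))$.

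\textbf{Step 3: descent and comparison with $\theta_n(f)$.} Fitting ideals commute with base change along $\Lambda\twoheadrightarrow\Lambda_n=\Lambda/\omega_n$, so
\[
\mathrm{Fitt}_{\Lambda_n}(X_n)=\mathrm{Fitt}_{\Lambda_n}(X/\omega_n X)=\mathrm{Fitt}_\Lambda(X)\cdot\Lambda_n=\big(\text{image of }L_p(f)\text{ in }\Lambda_n\big).
\]
To finish I would identify this image with $(\theta_n(f))$. By the construction of the $p$-adic $L$-function from modular symbols (Mazur--Tate--Teitelbaum), $L_p(f)$ is the limit of the $\alpha$-stabilized elements $\alpha^{-(n+1)}\theta'_{n+1}(f)$, where $\alpha$ is the unit root of $x^2-a_px+p$; since $\alpha\in\Zp^\times$ and the Gauss-sum twists that intervene are units, the image of $L_p(f)$ in $\Lambda_n$ and $\theta_n(f)$ generate the same ideal, and combining this with the displayed equality gives $(\theta_n(f))=\mathrm{Fitt}_{\Lambda_n}(X_n)$. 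I expect the main obstacle to lie in Step 1: proving that the control map is an honest isomorphism, not merely a pseudo-isomorphism with bounded finite kernel and cokernel, since this is precisely where the hypotheses $a_p\not\equiv 1$, $p\nmid c_\ell$ and surjectivity of $\overline\rho$ are used, and where any surviving error term would insert a correction factor (a power of an Euler factor, or a Tamagawa-type term) into $\mathrm{Fitt}_{\Lambda_n}(X_n)$ and spoil the clean equality. Pinning down the period and $\alpha$-power normalizations in Step 3 is a secondary technical point; alternatively, one could bypass the descent argument in Step 3 by appealing to Kurihara's structure theory for finitely generated torsion $\Lambda$-modules to relate the finite-level Fitting ideals directly.
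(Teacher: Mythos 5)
First, a point of orientation: the paper does not prove this statement at all — it is quoted from Kim--Kurihara \cite{KK21} as motivation, and the body of the paper proves only the two-variable analogue (Theorem \ref{thm:main}) by the same general scheme you propose. Your Steps 1--2 and the base-change step are sound and match the paper's toolkit (Proposition \ref{prop:Control} for control, torsionness plus absence of finite submodules giving $\mathrm{Fitt}_\Lambda=\mathrm{char}_\Lambda$, and Lemma \ref{lm:Fitt quotient} for descent). The genuine gap is in Step 3, and you have misdiagnosed where the difficulty sits. The image of $L_p(f)$ in $\Lambda_n$ is \emph{not} $\alpha^{-(n+1)}\theta^\prime_{n+1}(f)$, nor a unit multiple of $\theta_n(f)$ on the nose: by the Mazur--Tate--Teitelbaum construction it is the $p$-stabilized element, essentially $\alpha^{-(n+1)}\bigl(\theta_n(f)-\alpha^{-1}\nu_{n,n-1}(\theta_{n-1}(f))\bigr)$ up to normalization. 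So the equality $(\theta_n(f))=(\text{image of }L_p(f))$ in $\Lambda_n$ is a statement that must be proved, and "since $\alpha\in\Zp^\times$ and the Gauss sums are units" is not a proof: one needs an induction on $n$ using the norm relations, the fact that $\nu_{n,n-1}$ is multiplication by $\Phi_n(1+T)$, whose constant term is divisible by $p$, and, at the bottom layer, the invertibility of the Euler-type factor $(1-\alpha^{-1})$ — which is exactly where $a_p\not\equiv 1\bmod p$ is used a second time (if $a_p\equiv 1$ the two ideals genuinely differ). This stabilization comparison is what the paper spends Lemma \ref{lem:theta1}, Proposition \ref{prop:B(n,m)} and Theorem \ref{thm:stabilized} on in the two-variable setting; by contrast, your Step 1, which you flag as the main obstacle, is the standard part under the stated hypotheses.

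A second, smaller but hypothesis-consuming point: for an equality in $\Lambda_n$ (not just in $\Lambda_n\otimes\Qp$) you must know that the period normalizing the modular symbols in $\theta_n(f)$ agrees up to a $p$-adic unit with the period built into the $p$-adic $L$-function appearing in the assumed main conjecture. This is where the surjectivity of $\overline{\rho}$ does real work in \cite{KK21} beyond giving $E(\QQ_\infty)[p]=0$, and it is the analogue of the paper's extra assumption that $\Omega_{\mathrm{Loe}}/\langle f,f\rangle_{N_E}$ be a $p$-adic unit in Theorem \ref{thm:main} (and the reason its unconditional corollary is only stated after tensoring with $\Qp$). So what you dismiss as "pinning down normalizations" is, together with the stabilization step above, the actual content separating the weak statement up to units in $\Lambda_n\otimes\Qp$ from the integral equality claimed in the theorem.
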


In this note, we investigate Mazur--Tate type conjectures when $\QQ$ is replaced by an imaginary quadratic field $K$ and the cyclotomic $\Zp$-extension $\QQ_\infty$ is replaced by $K_\infty$, the unique $\Zp^2$-extension of $K$. The setup is the following: We fix a prime number $p>3$ and still consider an elliptic curve $E$ defined over $\QQ$ with good ordinary reduction at $p$ and conductor $N_E$. We let $K$ be an imaginary quadratic field where $(p)=\gp\gq$ splits and we require that both the class number of $K$ and the discriminant of $K$ are corpime to $p$. If $\mathfrak{f}$ is a modulus of $K$, let $K(\mathfrak{f})$ denote the ray class field of $K$ of conductor $\mathfrak{f}$. By using an adelic description of modular symbols, Haran \cite{Han87} constructed analogues of Mazur--Tate elements at $K(\gp^{n+1}\gq^{m+1})$ for integers $n,m \geq -1$. These elements will be denoted by $\Theta_{n,m}$ and live in the group ring $\Zp[\Gal(K(\gp^{n+1}\gq^{m+1})/K)/\Gal(K(p)/K)]$ after scaling by a period. Let $K^\prime(\gp^{n+1}\gq^{m+1})$ be the subfield of $K(\gp^{n+1}\gq^{m+1})$ fixed by $\Gal(K(p)/K)$. Put $\Lambda \colonequals \Zp \llbracket \Gal(K_\infty/K) \rrbracket$. To link the Mazur--Tate element $\Theta_{n,m}$ with the Selmer group of $E$ over $K^\prime(\gp^{n+1}\gq^{m+1})$, we will need a two-variable Iwasawa main conjecture.

\begin{conjecture}[Iwasawa main conjecture]\label{conj:IMC}
    Let $L_p^\mathrm{Hi}(E/K)$ be Hida's two-variable $p$-adic $L$-function attached to $E$ over $K_\infty$. Then, $\Sel(K^\prime(\gp^{n+1}\gq^{m+1}),E[p^\infty])$ is a cotorsion $\Lambda$-module and
    $$
    (L_p^\mathrm{Hi}(E/K)) = \mathrm{Char}_\Lambda \left( \Hom_{\Zp}(\Sel(K_\infty,E[p^\infty]),\Qp/\Zp)\right)
    $$
    as ideals in $\Lambda$.
\end{conjecture}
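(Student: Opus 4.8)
The plan is to establish the two divisibilities of characteristic ideals separately and then patch them, using that $\Lambda\cong\Zp\llbracket S,T\rrbracket$ is a regular local ring (in particular a unique factorisation domain), so that an inclusion of characteristic ideals in both directions forces equality; the ingredients required are, for the most part, available in the literature under hypotheses of the kind imposed on $E$ and $K$ here. Write $X\colonequals\Hom_{\Zp}(\Sel(K_\infty,E[p^\infty]),\Qp/\Zp)$ and fix the cyclotomic and anticyclotomic $\Zp$-lines inside $\Gal(K_\infty/K)$.

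\emph{Torsionness.} First I would record that the cyclotomic specialisation of $L_p^\mathrm{Hi}(E/K)$ is, up to a unit, the product of the Mazur--Swinnerton-Dyer $p$-adic $L$-functions of $E/\QQ$ and of the quadratic twist of $E$ by the character of $K$; by Rohrlich's nonvanishing theorem this product is nonzero, so $L_p^\mathrm{Hi}(E/K)\neq0$. Combined with a control theorem relating $\Sel(K_\infty,E[p^\infty])$ to the Selmer groups of the finite layers and with Kato's divisibility over $\QQ$, this gives that $X$ is a torsion $\Lambda$-module, so that $\mathrm{Char}_\Lambda(X)$ is defined and $\Sel(K_\infty,E[p^\infty])$ is $\Lambda$-cotorsion.

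\emph{The divisibility $\mathrm{Char}_\Lambda(X)\mid(L_p^\mathrm{Hi}(E/K))$.} Because $p$ splits in $K$, the Hecke characters of $K$ ramified only above $p$ organise into a two-parameter $p$-adic family of $p$-ordinary CM modular forms $\theta_\chi$, and the Beilinson--Flach Euler system attached to the pair $(f,\theta_\chi)$, with $f$ the newform of $E$, varies over this family and hence over the two variables of $\Lambda$; the explicit reciprocity law (Bertolini--Darmon--Rotger, Kings--Loeffler--Zerbes) identifies the bottom class with $L_p^\mathrm{Hi}(E/K)$ up to controllable Euler factors, and the Euler-system machinery then yields this divisibility. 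If $E$ has complex multiplication by $K$ one instead invokes Rubin's two-variable main conjecture for CM elliptic curves together with the Katz $p$-adic $L$-function.

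\emph{Reverse divisibility, conclusion, and main obstacle.} For $(L_p^\mathrm{Hi}(E/K))\mid\mathrm{Char}_\Lambda(X)$ I would use the Eisenstein-congruence method on $\mathrm{GU}(2,2)$ (Skinner--Urban, Wan): build a $p$-adic family of Klingen--Eisenstein series whose constant term carries $L_p^\mathrm{Hi}(E/K)$, show it is congruent to a cusp form, and extract classes in the Selmer group from the Galois representation of the latter. This step is where the running hypotheses enter --- $p>3$, $p$ coprime to $h_K$ and to $\disc(K)$, irreducibility and (for non-CM $E$) surjectivity of $\overline{\rho}$, and a non-vanishing-mod-$p$ condition ensuring the congruence is non-trivial. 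With both divisibilities in hand, and $\Lambda$ a unique factorisation domain, the characteristic ideals coincide, which is the asserted equality. The hard part is this last step: realising the Eisenstein congruence in the genuinely two-variable setting, and matching the resulting constant term against \emph{Hida's} normalisation of the $p$-adic $L$-function rather than the unitary-group one, requires a delicate comparison of periods and of interpolation ranges, and residual irreducibility cannot be dropped because the congruence degenerates at Eisenstein primes.
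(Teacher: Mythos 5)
The statement you were asked about is Conjecture \ref{conj:IMC}: the paper does not prove it, but assumes it as a hypothesis and only invokes, as Theorem \ref{thm:IMC}, the result of Castella--Wan \cite{CW22}, which requires substantially stronger hypotheses (the generalized Heegner hypothesis with $N^-\neq 1$, $N_E$ squarefree, $\overline{\rho}_f$ ramified at every prime dividing $N^-$, $\mathbb{I}$ regular) and yields the equality only as ideals in $\Lambda\otimes\Qp$, not in $\Lambda$. So there is no proof in the paper to match your argument against, and your proposal should be judged as an attempted proof of an open statement.

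Judged that way, it has a genuine gap: it is a programme, not a proof. The torsionness step needs more than nonvanishing of the cyclotomic specialisation --- you must descend torsionness from one line to the two-variable algebra, which requires a control/Nakayama argument in the anticyclotomic direction that you only gesture at. The divisibility $\mathrm{Char}_\Lambda(X)\mid(L_p^\mathrm{Hi})$ via Beilinson--Flach classes is indeed in the literature, but only under big-image/non-Eisenstein hypotheses that the conjecture as stated does not impose. The decisive step, the reverse divisibility, is exactly where your sketch stops: in the definite setting, Skinner--Urban \cite{SU14} controls the characteristic ideal of the $\Sigma$-primitive Selmer group, and (as the paper's closing remark points out) one still has to compare this with the Selmer group appearing in Conjecture \ref{conj:IMC}, which is an unresolved issue, not a routine one; in the indefinite setting one needs the Heegner-type hypotheses of \cite{CW22}, and even then the conclusion is only up to powers of $p$ (i.e.\ in $\Lambda\otimes\Qp$), because $\mu$-invariant and period-normalisation problems are not controlled. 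Consequently your final claim of an \emph{integral} equality of ideals in $\Lambda$, in the generality of the conjecture (no condition on $N^-$, no residual ramification hypothesis), does not follow from the ingredients you cite; acknowledging the Eisenstein-congruence step as ``the hard part'' is accurate, but it is precisely the content of the conjecture, which is why the paper leaves it as such.
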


Let $\Lambda_{n,m}\colonequals \Zp[\Gal(K^\prime(\gp^{n+1}\gq^{m+1})/K)]$. Our theorem is the following:

\begin{theorem}
    Assume that the Iwasawa main conjecture \ref{conj:IMC} holds. Also suppose that $a_p \not\equiv 1 \bmod p$, $E(K)[p]$ is trivial and $E(K_v)[p]$ is trivial for every prime $v$ where $E$ has bad reduction. Then, for $n+m \geq 1$, we have
    $$
    (\Theta_{n,m}) = \mathrm{Fitt}_{\Lambda_{n,m}\otimes \Qp} \left(  \Hom_{\Zp}(\Sel(K^\prime(\gp^{n+1}\gq^{m+1}),E[p^\infty]),\Qp/\Zp) \right)
    $$
    as ideals in $\Lambda_{n,m}\otimes \Qp$.
\end{theorem}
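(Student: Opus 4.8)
The plan is to descend the two-variable main conjecture \ref{conj:IMC} to finite level, in the spirit of Kim--Kurihara's treatment \cite{KK21} of the cyclotomic case. Write $X_\infty \colonequals \Hom_{\Zp}(\Sel(K_\infty,E[p^\infty]),\Qp/\Zp)$ and $X_{n,m} \colonequals \Hom_{\Zp}(\Sel(K^\prime(\gp^{n+1}\gq^{m+1}),E[p^\infty]),\Qp/\Zp)$, and let $\pi_{n,m}\colon \Lambda \twoheadrightarrow \Lambda_{n,m}$ be the natural surjection. Since $\Gal(K^\prime(\gp^{n+1}\gq^{m+1})/K)$ is a finite abelian $p$-group, $\Lambda_{n,m}\otimes\Qp$ is a finite product $\prod_\chi F_\chi$ of fields, indexed by the $\Gal(\overline{\Qp}/\Qp)$-orbits of characters $\chi$; for a finitely generated $\Lambda_{n,m}$-module $M$ the Fitting ideal $\mathrm{Fitt}_{\Lambda_{n,m}\otimes\Qp}(M\otimes\Qp)$ is then the product over $\chi$ of its $\chi$-components, each component being all of $F_\chi$ when $(M\otimes\Qp)_\chi = 0$ and $0$ otherwise. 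It therefore suffices to prove, for every such $\chi$, the equivalence
\[
\chi(\Theta_{n,m})\neq 0 \iff (X_{n,m}\otimes\Qp)_\chi = 0 .
\]

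\emph{Analytic side.} First I would relate $\Theta_{n,m}$ to Hida's $p$-adic $L$-function. Combining Haran's adelic modular-symbol construction of $\Theta_{n,m}$ \cite{Han87} with the interpolation property defining $L_p^{\mathrm{Hi}}(E/K)$, one should obtain that for each $\chi$ the values $\chi(\Theta_{n,m})$ and $\chi\big(\pi_{n,m}(L_p^{\mathrm{Hi}}(E/K))\big)$ differ by a nonzero $p$-adic number: up to Gauss sums, periods, and Euler-type factors at $\gp$ and $\gq$, both compute the twisted Rankin--Selberg value $L(E/K,\chi,1)$. The factors at a prime above $p$ have the form $(1-\alpha_\gp^{-1})$, resp.\ $(1-\alpha_\gp^{-1}\chi(\gp))$, when $\chi$ is unramified there, and Gauss-sum factors when $\chi$ is ramified; as $\alpha_\gp$ is the unit root with $\alpha_\gp\equiv a_p\bmod\gp$ and $a_p\not\equiv 1\bmod p$, all of them are $p$-adic units. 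Hence $(\Theta_{n,m}) = \big(\pi_{n,m}(L_p^{\mathrm{Hi}}(E/K))\big)$ in $\Lambda_{n,m}\otimes\Qp$, and by \ref{conj:IMC}, $\chi(\Theta_{n,m})\neq 0$ holds if and only if $\mathrm{Char}_\Lambda(X_\infty)$ is not contained in the height-two prime $\ker(\chi\circ\pi_{n,m})$ of $\Lambda$.

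\emph{Algebraic side.} Both $\gp$ and $\gq$ ramify in $K_\infty/K$ with residue field remaining $\mathbf{F}_p$ along the tower, so at the primes above $p$ the Kummer local condition defining $\Sel(K^\prime(\gp^{n+1}\gq^{m+1}),E[p^\infty])$ agrees with the ordinary (Greenberg) condition used for $\Sel(K_\infty,E[p^\infty])$: the two differ by a group of order controlled by $\widetilde{E}(\mathbf{F}_p)[p]$, which vanishes since $\#\widetilde{E}(\mathbf{F}_p)=p+1-a_p$ and $a_p\not\equiv 1\bmod p$. With $E(K)[p]=0$ and $E(K_v)[p]=0$ at the bad primes, Mazur's control theorem then shows that $X_{n,m}\to X_\infty\otimes_\Lambda\Lambda_{n,m}$ has finite kernel and cokernel, so that $(X_{n,m}\otimes\Qp)_\chi$ vanishes precisely when $(X_\infty)_{\ker(\chi\circ\pi_{n,m})}=0$, i.e.\ when $\ker(\chi\circ\pi_{n,m})\notin\mathrm{Supp}_\Lambda(X_\infty)$. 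Finally, under the same hypotheses $X_\infty$ should have no nonzero pseudo-null $\Lambda$-submodule; a torsion $\Lambda$-module with this property embeds, with pseudo-null cokernel, into a sum $\bigoplus_i\Lambda/\mathfrak{P}_i^{e_i}$, whence $\mathrm{Supp}_\Lambda(X_\infty)=V(\mathrm{Char}_\Lambda(X_\infty))$. Combining this with the analytic side yields $(X_{n,m}\otimes\Qp)_\chi=0 \iff \chi(\Theta_{n,m})\neq0$, as required.

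\emph{Main obstacle.} The hardest point is the last one: showing that the dual Selmer group over the $\Zp^2$-extension has no nonzero pseudo-null $\Lambda$-submodule. Over a $\Zp$-extension this is classical work of Greenberg, but here $\Lambda$ has Krull dimension three and ``pseudo-null'' no longer means ``finite'', so one must run the global-to-local Poitou--Tate computation layer by layer, using $E(K)[p]=0$ together with the triviality of the local $H^0$-terms at $\gp$, $\gq$ and at the bad primes to annihilate the error modules at each step; the presence of two ramified primes above $p$ makes this more delicate than in the cyclotomic setting. A secondary, essentially computational, task is to make the analytic comparison precise---matching Haran's adelic element with Hida's two-variable $p$-adic $L$-function and tracking the periods and the local factors at $\gp$ and $\gq$. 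The restriction $n+m\ge 1$ enters both here and in the control step, since it guarantees $K^\prime(\gp^{n+1}\gq^{m+1})\neq K$ and so excludes the exceptional behaviour at the trivial character.
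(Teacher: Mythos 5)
Your character-by-character reduction over $\Lambda_{n,m}\otimes\Qp$ is legitimate, and your control step and the reduction of the analytic side to a comparison with $L_p^{\mathrm{Hi}}$ are in the same spirit as the paper. But the step you yourself flag as the ``main obstacle'' --- proving that the dual Selmer group over the $\Zp^2$-extension has no nonzero pseudo-null $\Lambda$-submodule, so that $\mathrm{Supp}_\Lambda(X_\infty)=V(\mathrm{Char}_\Lambda(X_\infty))$ at the height-two primes $\ker(\chi\circ\pi_{n,m})$ --- is a genuine gap: it is a substantial theorem over $\Zp\llbracket T_1,T_2\rrbracket$ (where pseudo-null no longer means finite), you give no argument for it, and the hypotheses of the theorem do not obviously supply one. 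The paper shows this detour is unnecessary. Under $E(K_v)[p]=0$ at the bad primes the local Tamagawa factors in Nekov\'a\v{r}'s theory vanish, so the Iwasawa-theoretic Selmer complex is perfect with amplitude $[1,2]$ and Euler--Poincar\'e characteristic zero; this produces a two-term presentation $P\to P\to \widetilde H^2_{f,\mathrm{Iw}}\to 0$ and hence $\mathrm{Fitt}_\Lambda(\mathcal{X}_{\mathrm{Gr}}(K_\infty,A))=\mathrm{char}_\Lambda(\mathcal{X}_{\mathrm{Gr}}(K_\infty,A))$ (Propositions \ref{Prop:Nekovar Fitting} and \ref{prop:FittChar}). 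Since Fitting ideals commute exactly with the quotient $\Lambda\to\Lambda_{n,m}$ (Lemma \ref{lm:Fitt quotient}) and the control theorem \eqref{eqn:control2} is an isomorphism, the descent is purely ideal-theoretic and no support/localization or pseudo-nullity analysis is needed.

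A second, smaller gap is on the analytic side. The projection of the $p$-adic $L$-function to $\Lambda_{n,m}$ is the $S$-refined element $\Theta^S_{n,m}$, not $\Theta_{n,m}$, and for imprimitive characters the two differ not just by Euler factors but by trace terms coming from lower-conductor Mazur--Tate elements; making $(\Theta_{n,m})=(\Theta^S_{n,m})$ precise is the content of the inductive argument in Lemma \ref{lem:theta1} and Proposition \ref{prop:B(n,m)}, where the correction factor $B(n,m)$ is shown to be a unit because the relevant traces are multiplication by $\Phi_k(\gamma_v)$, whose constant term is divisible by $p$. This is also where the restriction $n+m\geq 1$ genuinely enters (the excluded cases involve the trace along $G_{\gp\gq}/G_\gp\cong\ZZ/(p-1)\ZZ$, which is not of this form); it is not about excluding the trivial character or about the control step, as you suggest. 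Your heuristic that $a_p\not\equiv 1\bmod p$ makes the local factors units is the right idea, but as written it does not yet yield the needed equality of ideals.
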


We end this section by reviewing previous works on Mazur--Tate type conjectures. In their work \cite{KK21}, Kim and Kurihara also showed that a modified version of the weak Mazur--Tate conjecture, namely that
$$(\theta_n(f),\nu_{n,n-1}(\theta_{n-1}(f)) \subseteq \mathrm{Fitt}_{\Lambda_n}\left( \Hom_{\Zp}(\Sel(\QQ^{(n)},E[p^\infty]), \Qp/\Zp) \right),$$
also holds for elliptic curves with supersingular reduction at $p$ under mild hypotheses. Here, $\nu_{n,n-1}$ is a trace map $\Lambda_{n-1}\to \Lambda_n$. Moreover, they give sufficient conditions so that the inclusion becomes an equality. Bley and Marcias Castillo \cite[Theorem 2.12]{BMC17} also showed a form of conjecture \ref{conj:strong} for abelian varieties over a cyclic extension of a base number field. Emerton, Pollack and Weston \cite{EPW22} proved a form of conjecture \ref{conj:weak} for Mazur--Tate elements coming from newforms using the $p$-adic Langland correspondance and Kato's zeta element. Kataoka also proved conjecture \ref{conj:weak} over more general number field by using an equivariant point of view in his PhD thesis. C.H. Kim proved an anticyclotomic analogue of conjecture \ref{conj:weak} for $p$-ordinary modular forms over a large class of cyclic ring class extensions of an imaginary quadratic field \cite{kim18} and for supersingular elliptic curves along finite layers of the anticyclotomic $\Zp$-extension of an imaginary quadratic field \cite{kim24}. See also \cite{BKS21} and the works of Ota \cite{Ota18}, \cite{Ota23}.

\textbf{Acknowledgements:} The author is deeply grateful to his PhD advisor Antonio Lei for suggesting the problem and for numerous discussions on the topic. The author also thanks Kâzım Büyükboduk, Bastien Desrochers, Sören Kleine, David Loeffler and Katharina Müller for helpful discussions. This research is supported by the Fond de Recherche du Québec Nature et Technologie's 3rd cycle scholarship.

\section{Notations}
Fix an odd prime number $p>3$. Let $K$ be an imaginary quadratic number field where $(p)=\gp \gq$ splits. Let $K_\infty$ be the compositum of all $\Zp$-extensions of $K$ and let $\Gamma \colonequals \Gal (K_\infty / K)\cong \Zp^2$. We will assume that the class number of $K$ is coprime to $p$. Given a modulus $\mathfrak{f}$ of $K$, we denote by $K(\mathfrak{f})$ its ray class field of conductor $\mathfrak{f}$. Let $S=\{\gp,\gq\}$ denote the set of primes above $p$ in $K$. If $\Sigma$ is any finite set of places of $K$, let $G_{K,\Sigma}$ be the Galois group of the maximal algebraic extension of $K$ unramified outside $\Sigma$. For $v \in S$, let $G_v$ denote the Galois group of the extension $K(v^\infty)\cap K_\infty /K$ and note that $\Gamma \cong G_\gp \times G_{\gq}$. We will use the symbol $\Lambda$ for the Iwasawa algebra $\Zp \llbracket \Gamma \rrbracket \colonequals \varprojlim_H \Zp [\Gamma / H]$ where $H$ runs through the normal open subgroups of $\Gamma$. We fix $\gamma_\gp$ (resp. $\gamma_{\gq}$) a topological generator of $G_\gp$ (resp. $G_{\gq}$). Then, the Iwasawa algebra $\Lambda$ may be identified with the power series ring $\Zp \llbracket \gamma_\gp -1, \gamma_{\gq}-1 \rrbracket$. For an integer $n \geq 0$, define $\omega_n(1+X)\colonequals (1+X)^{p^n}-1$. For an integer $n\geq 1$, define $\Phi_{n}(1+X) \colonequals \sum_{k=0}^{p-1}(1+X)^{kp^{n-1}}$ and $\Phi_0(1+X)\colonequals 1+X$. Let $\iota : \Lambda \to \Lambda$ be the involution induced by $\sigma \mapsto \sigma^{-1}$ for $\sigma \in \Gamma$. If $M$ is any $\Lambda$-module, we write $M^\iota$ for the same module $M$ but with $\Lambda$-action given by $\lambda \cdot m \colonequals \iota(\lambda)m$ for all $m \in M$ and $\lambda \in \Lambda$. Another equivalent definition can be given by setting $M^\iota \colonequals M \otimes_{\Lambda,\iota}\Lambda$. This is a $\Lambda$-module with operations given by $\delta(m \otimes \delta^\prime) = m\otimes \delta\delta^\prime$ and $(\delta m)\otimes \delta^\prime = m\otimes \iota(\delta)\delta^\prime$ for all $\delta,\delta^\prime \in \Lambda, m \in M$. Let $\Delta$ be a finite group and $\eta$ a character of $\Delta$. Suppose that $p \nmid |\Delta |$ and let $\mathcal{O}_\Delta$ be the $p$-adic completion of the extension of $\Z$ obtained be adding the $|\Delta|$-th roots of unity. We let $e_\eta \colonequals \frac{1}{|\Delta|}\sum_{\sigma \in \Delta}\eta(\sigma)^{-1}\cdot \sigma \in \mathcal{O}_\Delta [\Delta]$ be the idempotent element corresponding to $\eta$.

Let $E$ be an elliptic curve defined over $\Q$ with good ordinary reduction at $p$ and with conductor $N_E$ not divisible by $p$. As a representation of $G_{\Qp}$, the $p$-adic Tate module $T \colonequals T_p E = \varprojlim_n E[p^n]$ is reducible and we have an exact sequence of $\Zp[G_{\Qp}]$-modules
$$
0 \to T^+ \to T \to T^- \to 0
$$
where $T^+$ and $T^-$ are free of rank one over $\Zp$ and $T^-$ is unramified. Let $A\colonequals \Hom_\mathrm{cts}(T,\mu_{p^\infty})\cong E[p^\infty]$ where $\mu_{p^\infty}$ is the set of $p$-power roots of unity and let $A^- \colonequals \Hom_{\mathrm{cts}}(T^-,\mu_{p^\infty})$. We let $a_p$ be the quantity defined by $p+1-|E(\mathbb{F}_p)|$. If $v$ is a place of $K$, $c_v$ will denote the local Tamagawa number of $E$ at $v$ which is defined by the index $[E(K_v):E^0(K_v)]$ where $E^0(K_v)$ is the subgroup of points which have good reduction. We let $\mathrm{Tam}(E)$ denote the product of the local Tamagawa numbers $\prod_v c_v$.

If $L$ is a number field or a local field, we write $G_L$ for the Galois group $\Gal (\overline{L}/L)$ where $\overline{L}$ is an algebraic closure of $L$.

\section{Iwasawa main conjecture for $\mathrm{GL}_2$}

In this section, we review the formulation of the Iwasawa main conjecture that was proven by Castella and Wan in \cite{CW22}.

\subsection{Selmer groups}

We first review the construction of the Selmer groups appearing in the main conjecture.

Let $F$ be a finite extension of $K$ contained in $K_\infty$ and let $\Sigma$ be a finite set of places of $F$ containing the infinite places and the primes dividing $N_Ep$. Let $G_{F,\Sigma}$ be the Galois group of the maximal extension of $F$ unramified outside $\Sigma$. 

\begin{definition}
    Define the (strict) Greenberg Selmer group $\Sel_\mathrm{Gr}(F,A)$ over $F$ by
    $$
    \Sel_\mathrm{Gr}(F,A) \colonequals \ker \left( H^1(G_{F,\Sigma},A) \to \prod_{w \nmid p} H^1(I_w,A) \times \prod_{w|p}H^1(F_w,A^-) \right)
    $$
    where $w$ runs over the places of $F$ and $I_w$ is the inertia subgroup at $w$ inside $\Gal(\overline{F_w}/F_w)$.
    Define the (strict) Greenberg Selmer group $\Sel_\mathrm{Gr}(K_\infty,A)$ over $K_\infty$ by
    $$
    \Sel_\mathrm{Gr}(K_\infty,A) \colonequals \ker \left( H^1(K_\infty,A) \to \prod_{w \nmid p}H^1(I_w,A) \times \prod_{w | p}H^1(K_{\infty,w}, A^-) \right)
    $$
    where $w$ runs over the places of $K_\infty$ and $I_w$ is the inertia subgroup at $w$ inside $\Gal(\overline{K_{\infty,w}}/K_{\infty,w})$.
\end{definition}

We shall consider the Pontryagin dual of the previously defined Selmer group 
$$
\mathcal{X}_\mathrm{Gr}(K_\infty,A) \colonequals \Hom_{\Zp} (\Sel_\mathrm{Gr}(K_\infty,A), \Qp/\Zp).
$$

One may also consider the Kummer sequence
$$
0 \to E(F_w)\otimes \Qp/\Zp \xrightarrow{k_w} H^1(F_w,A) \to H^1(F_w, E(\overline{F_w}))_{\text{$p$-primary}} \to 0
$$
and define the classical Selmer group $\Sel_\mathrm{class}(F,A)$ over $F$ by
$$
\Sel_\mathrm{class}(F,A) \colonequals \ker \left( H^1(F,A) \to \prod_{w|p}\frac{H^1(F_w,A)}{\mathrm{Im}(k_w)} \right).
$$
The classical Selmer over $K_\infty$ is then given by the direct limit $$\Sel_\mathrm{class}(K_\infty,A) \colonequals \varinjlim_{K\subseteq F \subseteq K_\infty} \Sel_\mathrm{class}(F,A)$$ where $F$ runs over the finite subextensions of $K_\infty/K$.
The discussion of \cite[\S 2]{Gre89} shows that
\begin{equation}\label{eqn:Kummer=strict}
    \Sel_\mathrm{Gr}(F,A)=\Sel_\mathrm{class}(F,A).
\end{equation}

The Greenberg Selmer group satisfies the following control theorem:

\begin{proposition}\label{prop:Control}
Let $F$ be a finite extension contained in $K_\infty/K$. Assume that $E(K)[p]$ is trivial and that $a_p \not\equiv 1 \bmod p$. For all prime $v$ where $E$ has bad reduction, assume that $E(K_v)[p]$ is trivial. Then, 
$$
\Sel_{\mathrm{Gr}}(K_\infty,A)^{\Gal(K_\infty/F)} \cong \Sel_\mathrm{Gr}(F,A).
$$
\end{proposition}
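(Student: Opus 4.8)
The plan is to run the classical Mazur--Greenberg control argument. Set $G\colonequals\Gal(K_\infty/F)$ and write $\phi_F\colon H^1(G_{F,\Sigma},A)\to\mathcal P(F)$ for the map defining $\Sel_{\mathrm{Gr}}(F,A)$, where $\mathcal P(F)\colonequals\prod_{w\nmid p}H^1(I_w,A)\times\prod_{w\mid p}H^1(F_w,A^-)$, and similarly $\phi_{K_\infty}$ over $K_\infty$; since every $\Zp$-extension is unramified outside $p$, a Selmer class is unramified outside $\Sigma$, so one may freely replace full Galois cohomology by $H^1(G_{?,\Sigma},-)$. Taking $G$-invariants of the defining sequence over $K_\infty$ (using Shapiro's lemma, $\bigl(\prod_{w\mid v}(-)\bigr)^{G}\cong(-)^{D_v}$ for a chosen $w\mid v$, $D_v\colonequals\Gal(K_{\infty,w}/F_v)$) yields a commutative diagram whose vertical arrows are the restriction maps $\alpha\colon\Sel_{\mathrm{Gr}}(F,A)\to\Sel_{\mathrm{Gr}}(K_\infty,A)^{G}$, $\beta\colon H^1(G_{F,\Sigma},A)\to H^1(G_{K_\infty,\Sigma},A)^{G}$, and $\gamma=\prod_v\gamma_v\colon\mathcal P(F)\to\mathcal P(K_\infty)^{G}$. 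Replacing the right-hand modules by the images of $\phi_F$ and $\phi_{K_\infty}^{G}$ makes the two rows short exact; the snake lemma then shows that, as soon as $\beta$ is an isomorphism, $\alpha$ is injective and $\coker\alpha\cong\ker\bar\gamma\hookrightarrow\ker\gamma$, where $\bar\gamma$ is induced by $\gamma$ on images. So it suffices to prove (i) $\beta$ is an isomorphism and (ii) $\ker\gamma_v=0$ for every place $v$ of $F$.

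For (i), inflation--restriction identifies $\ker\beta$ with $H^1(G,A^{G_{K_\infty}})$ and embeds $\coker\beta$ into $H^2(G,A^{G_{K_\infty}})$, so it is enough that $A^{G_{K_\infty}}=E(K_\infty)[p^\infty]=0$. This follows from $E(K)[p]=0$: were $E(K_\infty)[p]$ nonzero, being a finite $\mathbb F_p$-vector space on which the pro-$p$ group $\Gamma$ acts, it would have nonzero $\Gamma$-invariants, contradicting $E(K_\infty)[p]^{\Gamma}=E(K)[p]=0$; hence $E(K_\infty)[p]=0$ and \emph{a fortiori} $E(K_\infty)[p^\infty]=0$. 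For (ii) at $v\nmid p$: since $K_\infty/F$ is unramified at $v$, for $w\mid v$ the inertia groups coincide, $I_w=I_v$, and the local restriction map $\gamma_v$ is injective --- automatically at primes of good reduction, and at primes of bad reduction by the usual argument using $E(K_v)[p]=0$, hence $E(K_{\infty,w})[p^\infty]=0$ (same pro-$p$ reasoning), where the identification $\Sel_{\mathrm{Gr}}(F,A)=\Sel_{\mathrm{class}}(F,A)$ of \eqref{eqn:Kummer=strict} is convenient. For (ii) at $v\mid p$: inflation--restriction for $K_{\infty,w}/F_v$ identifies $\ker\gamma_v$ with $H^1\bigl(D_v,(A^-)^{G_{K_{\infty,w}}}\bigr)$, so it suffices that $(A^-)^{G_{K_{\infty,w}}}=0$. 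Now $A^-$ is (up to isomorphism) the unramified line inside $E[p^\infty]$ on which arithmetic Frobenius acts by multiplication by the $p$-adic unit root $\alpha_p$ of $X^2-a_pX+p$, with $\alpha_p\equiv a_p\not\equiv1\pmod p$; since $K_{\infty,w}/\Qp$ is pro-$p$, its residue field contains no $\mathbb F_{p^d}$ with $1<d\mid p-1$, so the nontrivial unramified character $G_{\Qp}\to\mathbb F_p^\times$ sending Frobenius to $\alpha_p\bmod p$ stays nontrivial on $G_{K_{\infty,w}}$; therefore $A^-[p]^{G_{K_{\infty,w}}}=0$ and hence $(A^-)^{G_{K_{\infty,w}}}=0$.

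The only step carrying real content is the local computation at $p$: one must correctly identify the Galois module $A^-$ and the action of inertia and of Frobenius on it after restriction to the ramified $\Zp$- or $\Zp^2$-extension $K_{\infty,w}/F_v$, so that the cyclotomic/inertia part is absorbed and the invariants are controlled purely by the unit-root eigenvalue $a_p\bmod p$. This is exactly where $a_p\not\equiv1\bmod p$ enters, and it is indispensable: without it $(A^-)^{G_{K_{\infty,w}}}$ need not vanish and exact control can fail. Everything else --- the diagram chase, inflation--restriction, and the elementary pro-$p$ fixed-point arguments --- is routine.
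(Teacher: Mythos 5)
Your proof is correct, but it takes a genuinely different route from the paper: the paper's proof is essentially a citation --- it notes that $a_p\not\equiv 1\bmod p$ is equivalent to $E(\mathbf{F}_p)[p]=0$, invokes Greenberg's control theorem for the classical Selmer group (\cite[Proposition 5.6]{Gre03}), and transfers the conclusion to $\Sel_{\mathrm{Gr}}$ via the identification \eqref{eqn:Kummer=strict} --- whereas you rerun the underlying inflation--restriction/snake-lemma argument directly on the strict Greenberg conditions. Your route is self-contained and makes visible where each hypothesis enters: $E(K)[p]=0$ together with the pro-$p$ fixed-point argument kills $A^{G_{K_\infty}}$, hence $\beta$ is bijective, and $a_p\not\equiv 1\bmod p$ kills $(A^-)^{G_{K_{\infty,w}}}$ at $w\mid p$ because the residue degree of $K_{\infty,w}/\Qp$ is a $p$-power while the order of $a_p\bmod p$ divides $p-1$. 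Note that for this you must read $A^-$ as the \emph{unramified quotient} of $A$ (isomorphic to $\widetilde{E}[p^\infty]$, Frobenius acting by the unit root $\alpha\equiv a_p$), which is the intended meaning: as literally printed, $\Hom_{\mathrm{cts}}(T^-,\mu_{p^\infty})$ is a submodule of $A$, and only with the quotient reading does the defining map of the Selmer group exist, so your identification is the right one. Two harmless imprecisions: after taking $G$-invariants the bottom row of your diagram is only left exact (which is all the chase needs, since surjectivity of $\alpha$ only uses surjectivity of $\beta$ and injectivity of $\gamma$); and at $v\nmid p$ your appeal to $E(K_v)[p]=0$ is superfluous in your own formulation --- since $K_\infty/K$ is unramified outside $p$ you already have $I_w=I_v$, so $\gamma_v$ is injective at every such place, good or bad reduction alike (that hypothesis is what the paper's route needs, because it passes through the classical Selmer group where Tamagawa-type local discrepancies at bad primes intervene). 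In short, the paper's citation is shorter, while your direct argument avoids leaning on \eqref{eqn:Kummer=strict} at every finite layer and isolates the precise local computation at $p$ where $a_p\not\equiv 1\bmod p$ is indispensable.
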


\begin{proof}
Let $F$ be a finite extension contained in $K_\infty/K$. Note that the condition $a_p \not\equiv 1 \bmod p$ is equivalent to $E(\mathbf{F}_p)[p] = \{0\}$. Under the assumptions of the proposition, \cite[Proposition 5.6]{Gre03} gives
$$
\Sel_\mathrm{class}(K_\infty,A)^{\Gal (K_\infty/F)} \cong \Sel_\mathrm{class}(F,A).
$$
However, it follows from \eqref{eqn:Kummer=strict} that both definition of Selmer groups agree and so the result follows.
\end{proof}

\subsection{$p$-adic $L$-functions}

Let $f$ be the weight $2$ modular form of level $\Gamma_0(N_E)$ associated to $E$ by modularity. Remark that the Fourier coefficients of $f$ are defined over $\QQ$. By our ordinarity hypothesis, there is a unique unit root of the Hecke polynomial $X-a_p(E)+p$ which we denote by $\alpha$. Let $f_\alpha$ be the $p$-stabilization of $f$ associated to the root $\alpha$. Then, Hida defined a three-variable $p$-adic $L$-function $L_p^{Hi}(\mathbf{f}/K)$ where $\mathbf{f}$ is a Hida family over $\mathbb{I}$, some flat extension of a one-variable Iwasawa algebra (see \cite[Section 2.4]{CW22} for a detailed construction). By taking $\mathbf{f}$ to be a Hida family passing through $f_\alpha$, we may specialize $L_p^{Hi}(\mathbf{f}/K)$ to a two-variable $p$-adic $L$-function $L_p^{Hi}(f_\alpha/K)$. Since $f_\alpha$ is defined over $\Qp$, $L_p^{Hi}(f_\alpha/K)$ is an element of $\Lambda$.

\subsection{The main conjecture}

We now state the main result of \cite{CW22}. Let $\rho_{f_\alpha}:G_\QQ\to \mathrm{GL}_2(\Qp)$ be the Galois representation associated to $f_\alpha$ and $\overline{\rho}_{f_\alpha}:G_\QQ\to \mathrm{GL}_2(\ZZ/p\ZZ)$ be the residual representation. Suppose that the quadratic field $K$ has discriminant coprime to $N_Ep$. Write $N_E=N^+N^-$ where $N^+$ (resp. $N^-$) is divisible only by primes which are split (resp. inert) in $K$. We will assume that $K$ satisfies the generalized Heegner hypothesis relative to $N_E$:
$$
\text{$N^-$ is the squarefree product of an even number of primes.}
$$

\begin{theorem}\label{thm:IMC}
    Suppose that $N_E$ is squarefree, $N^- \neq 1$ and that $\overline{\rho}_f$ is ramified at every prime dividing $N^-$. Also assume that $\mathbb{I}$ is regular. Then, $X_\mathrm{Gr}(K_\infty,A)$ is $\Lambda$-torsion and 
    $$
    \mathrm{Char}_\Lambda (X_\mathrm{Gr}(K_\infty,A)) = (L_p^\mathrm{Hi}(f_\alpha/K))
    $$
    as ideals in $\Lambda \otimes \Qp$.
\end{theorem}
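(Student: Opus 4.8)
The statement is the $\mathrm{GL}_2$ Iwasawa--Greenberg main conjecture for $E$ over the $\Zp^2$-extension $K_\infty/K$, and my plan is to deduce it from the work of Castella and Wan \cite{CW22}, in which a \emph{three}-variable main conjecture is established. Concretely, over the Iwasawa algebra $\mathbb{I}\llbracket\Gamma\rrbracket$ one has the equality of ideals, after inverting $p$,
$$
\mathrm{Char}_{\mathbb{I}\llbracket\Gamma\rrbracket}\big(X_\mathrm{Gr}(\mathbf{f}/K)\big)=\big(L_p^\mathrm{Hi}(\mathbf{f}/K)\big),
$$
where $X_\mathrm{Gr}(\mathbf{f}/K)$ is the Pontryagin dual of the strict Greenberg Selmer group of the Hida family $\mathbf{f}$ over $K_\infty$. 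The stated theorem would then follow by specializing this identity along the arithmetic point $\phi\colon\mathbb{I}\to\Zp$ (or a finite extension) attached to $f_\alpha$, which induces a surjection $\phi\colon\mathbb{I}\llbracket\Gamma\rrbracket\twoheadrightarrow\Lambda$.

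The first step is the specialization itself. On the analytic side there is nothing to do: by construction of Hida's $L$-function, $\phi$ carries $L_p^\mathrm{Hi}(\mathbf{f}/K)$ to $L_p^\mathrm{Hi}(f_\alpha/K)$. On the algebraic side I would prove a control theorem in the Hida variable identifying $X_\mathrm{Gr}(\mathbf{f}/K)\otimes_{\mathbb{I},\phi}\Zp$ with $X_\mathrm{Gr}(K_\infty,A)$ up to a pseudo-null error: the hypotheses that $E(K)[p]$ is trivial, $a_p\not\equiv 1\bmod p$, and $E(K_v)[p]$ is trivial at bad $v$ --- exactly those of Proposition \ref{prop:Control} --- are what kill the error terms coming from the Euler characteristic computation and the local conditions, and the weight-direction liftability follows from the freeness of Hida-theoretic cohomology over $\mathbb{I}$. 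The regularity hypothesis on $\mathbb{I}$ enters here: it makes $\mathbb{I}\llbracket\Gamma\rrbracket$ a regular (hence factorial) ring in which $\ker\phi$ is generated by a regular sequence, so that for a finitely generated torsion module with no pseudo-null submodule along $\ker\phi$ one has $\phi\big(\mathrm{Char}(M)\big)=\mathrm{Char}\big(M\otimes_\phi\Lambda\big)$. Combining this with the three-variable identity gives $\mathrm{Char}_\Lambda\big(X_\mathrm{Gr}(K_\infty,A)\big)=\big(L_p^\mathrm{Hi}(f_\alpha/K)\big)$ in $\Lambda\otimes\Qp$, and the $\Lambda$-torsionness of $X_\mathrm{Gr}(K_\infty,A)$ follows since otherwise the interpolation formula for $L_p^\mathrm{Hi}(f_\alpha/K)$ would be violated.

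It remains to recall why the three-variable identity holds, which is the genuine content imported from \cite{CW22}. It is obtained from two opposite divisibilities. For $(L_p^\mathrm{Hi})\subseteq\mathrm{Char}$ one runs the Eisenstein congruence method on a unitary group in the style of Skinner--Urban and Wan: one produces a family of cusp forms congruent to a Klingen--Eisenstein family whose constant term involves $L_p^\mathrm{Hi}$, and a lattice construction forces $L_p^\mathrm{Hi}$ into the characteristic ideal. For the reverse divisibility one uses an Euler system: the generalized Heegner hypothesis ($N^-\neq 1$ squarefree with an even number of inert factors) provides Heegner points on the Shimura curve attached to the quaternion algebra ramified at $N^-\infty$, which are assembled --- via big Heegner classes over $\mathbb{I}$ and the anticyclotomic line together with Beilinson--Flach classes in the cyclotomic direction --- into a class over $K_\infty$ whose Kolyvagin derivatives bound $X_\mathrm{Gr}$; the assumptions that $N_E$ is squarefree, $N^-\neq 1$, and $\overline{\rho}_f$ is ramified at every $\ell\mid N^-$ supply the level-raising congruences on the Eisenstein side, the irreducibility of $\overline{\rho}_f|_{G_K}$, and the non-degeneracy of the local conditions at $\ell\mid N^-$ on the Euler system side. \textbf{The main obstacle} is the Euler-system divisibility: one must let the Heegner/Beilinson--Flach classes vary in all three variables at once while controlling the strict Greenberg local conditions \emph{separately} at the two primes $\gp$ and $\gq$ above $p$ --- distinct because $p$ splits in $K$ --- and verify that the derivative classes land in the strict Greenberg Selmer complex rather than a coarser one; within the present deduction, the remaining delicate point is the Hida-variable control theorem feeding the specialization.
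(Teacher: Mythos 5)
The first thing to say is that the paper contains no proof of this statement: Theorem \ref{thm:IMC} is introduced with the sentence ``We now state the main result of \cite{CW22}'' and is simply quoted from Castella--Wan, so the comparison here is really about whether your reduction to \cite{CW22} is accurate. Your overall plan --- invoke the three-variable main conjecture over $\mathbb{I}\llbracket\Gamma\rrbracket$ and specialize at the arithmetic point attached to $f_\alpha$, using regularity of $\mathbb{I}$ to descend characteristic ideals --- is consistent in spirit with how such statements for a single form are extracted from the family-level result, and to that extent it matches the paper's (purely citational) treatment.

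However, there are genuine problems in the deduction you sketch. First, your specialization step imports the hypotheses of Proposition \ref{prop:Control} ($E(K)[p]=0$, $a_p\not\equiv 1\bmod p$, triviality of $E(K_v)[p]$ at bad $v$), which are \emph{not} hypotheses of Theorem \ref{thm:IMC} and should not be needed: the descent from $\mathbb{I}\llbracket\Gamma\rrbracket$ to $\Lambda$ is an algebraic statement about the behaviour of characteristic ideals under specialization along the height-one prime $\ker\phi$ (this is where regularity of $\mathbb{I}$ enters, and it is carried out after inverting $p$, which is why the conclusion lives in $\Lambda\otimes\Qp$); it is not an arithmetic control theorem for Selmer groups in the weight direction of the kind Proposition \ref{prop:Control} provides in the $\Gamma$-direction. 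Second, the claim that $\Lambda$-torsionness of $X_\mathrm{Gr}(K_\infty,A)$ ``follows since otherwise the interpolation formula would be violated'' is not an argument; in \cite{CW22} torsionness is part of the output of the Euler-system divisibility (nontriviality of the Heegner class, equivalently nonvanishing of the $p$-adic $L$-function), and you would need to say this. Third, your description of the Euler-system input misrepresents \cite{CW22}: in the indefinite setting they use Howard's big Heegner points on the Shimura curve attached to the quaternion algebra of discriminant $N^-$, together with Wan's divisibility coming from Klingen--Eisenstein congruences on a unitary group; Beilinson--Flach classes play no role there. Since both you and the paper ultimately defer the substantive content to \cite{CW22}, the third point is a matter of accurate attribution, but the first two are gaps in the reduction as you propose it.
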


\section{Two-variable Mazur--Tate elements}

We review the main result of Haran \cite{Han87} on the construction of Mazur--Tate elements for $E$ over $K_\infty$ by following the exposition in \cite{Loe14}.

We write $G_\mathfrak{f} \colonequals \Gal (K(\mathfrak{f})/K)$ for the ray class group of $K$ modulo $\mathfrak{f}$ and $G_1 \colonequals \Gal(K(1)/K)$ for the Galois group of the Hilbert class field of $K$. If $\mathfrak{m}$ and $\mathfrak{f}$ are two moduli such that $\mathfrak{m}|\mathfrak{f}$, we have a natural surjection $G_\mathfrak{f}\twoheadrightarrow G_\mathfrak{m}$. We let $\mathrm{Norm}_\mathfrak{m}^\mathfrak{f}:\Zp[G_\mathfrak{f}]\to \Zp[G_\mathfrak{m}]$ be the induced map on the group rings. We define the trace map by \begin{align*}
\mathrm{Tr}_{\mathfrak{m}}^\mathfrak{f}:\Zp [G_\mathfrak{m}]&\to \Zp [G_\mathfrak{f}]\\
x &\mapsto \sum_{y \in \Zp [G_\mathfrak{f}]\atop{\mathrm{Norm}^{\mathfrak{f}}_\mathfrak{m}(y)=x}} y.
\end{align*}
Let $v \in \{\gp,\gq\}$. In the special case where $\mathfrak{m}=v^{n-1}$ and $\mathfrak{f}=v^n$ with $n\geq 2$, remark that $\mathrm{Tr}_{\mathfrak{m}}^\mathfrak{f}$ is given by multiplication by $\Phi_{n-1}(\gamma_v)$. Indeed, preimages of $x$ under the norm map take the form $y\gamma_v^{kp^{n-1}}$ where $k=0,\ldots p-1$ and $y$ is a fixed preimage of $x$. 

Let $\Pi/K$ be the base-change to $K$ of the automorphic representation coming from $f$. Since $p$ is split in $K$, the $v$-th Hecke eigenvalue $a_v(\Pi/K)$ is equal to $a_p$ for both choices $v=\gp,\gq$.

\begin{theorem}\label{thm:Haran}
There exists a rank one $\ZZ$-submodule $L_f$ of $\mathbf{C}$, and for each ideal $\mathfrak{f}$ of $K$ coprime to the conductor of $\Pi$ an element 
$$
\widetilde{\Theta}_\mathfrak{f}(\Pi) \in \Z[G_\mathfrak{f}] \otimes_\Z L_f,
$$
where $G_\mathfrak{f}$ is the ray class group modulo $\mathfrak{f}$, such that the following relations hold:
\begin{enumerate}
\item If $\omega$ is a primitive ray class character of conductor $\mathfrak{f}$, then
$$
\omega(\widetilde{\Theta}_\mathfrak{f}(\Pi)) = \frac{L_\mathfrak{f}(\Pi,\omega,1)}{\tau(\omega)\cdot |\mathfrak{f}|^{1/2}\cdot (4\pi)^2}
$$
where $L_\mathfrak{f}(\Pi,\omega,s)$ denotes the $L$-function of $\Pi$ twisted by $\omega$, without the Euler factors at $\infty$ or at primes dividing $\mathfrak{f}$, and $\tau(\omega)$ is the Gauss sum (normalized so that $|\tau(\omega)|=1$);
\item For a prime $v$ not dividing the conductor of $\Pi$, we have
$$
\mathrm{Norm}_{\mathfrak{f}}^{\mathfrak{f}v} \widetilde{\Theta}_{\mathfrak{f}v}(\Pi) = \begin{cases} \left( a_v(\Pi)-\sigma_v-\sigma_v^{-1}\right) \widetilde{\Theta}_{\mathfrak{f}}(\Pi) & \text{if $v\nmid \mathfrak{f}$}, \\ a_v(\Pi)\widetilde{\Theta}_{\mathfrak{f}}-\mathrm{Tr}_{\mathfrak{f}/v}^\mathfrak{f}\widetilde{\Theta}_{\mathfrak{f}/v}(\Pi) & \text{if $v | \mathfrak{f}$}, \end{cases}
$$
where $\sigma_v$ denotes the class of $v$ in $G_\mathfrak{f}$.
\end{enumerate}
\end{theorem}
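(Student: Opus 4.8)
The plan is to prove this by realizing $\Pi$ inside the cohomology of the locally symmetric spaces attached to $\GL_2/K$ and extracting the elements $\widetilde{\Theta}_{\mathfrak{f}}(\Pi)$ as period pairings of the resulting class against a family of geodesic one-cycles indexed by the ray class group $G_{\mathfrak f}$; the interpolation formula is then a Hecke-integral ("Birch lemma") computation, and the norm relations come from the functoriality of the construction under degeneracy maps (this follows the lines of \cite{Han87}, with the geometric language of \cite{Loe14}). First I would fix the arithmetic model: for a modulus $\mathfrak f$ coprime to the conductor of $\Pi$, let $Y_{\mathfrak f}=\GL_2(K)\backslash\bigl(\mathbb{H}^3\times\GL_2(\A_{K,f})/U(\mathfrak f)\bigr)$ be the adelic $\GL_2/K$ locally symmetric space of $\Gamma_1(\mathfrak f)$-level, where $\mathbb{H}^3$ is hyperbolic $3$-space; its component group is a torsor under $G_{\mathfrak f}$. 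Since $f$ has weight $2$ with rational Hecke eigenvalues, $\Pi$ is cohomological and (up to twist) self-dual, contributing to $H^1_{\mathrm{cusp}}(Y_{\mathfrak f},\mathbf{C})$; invoking multiplicity one together with the period theory for cuspidal cohomology of $\GL_2$ over imaginary quadratic fields — in particular pinning down an Eisenstein-optimal integral generator — the $\Pi$-isotypic part of $H^1_{\mathrm{cusp}}(Y_1,\Z)$ is free of rank one. Fixing such a generator $\eta_\Pi$ and comparing with the description of cuspidal classes by harmonic differential forms produces the rank-one $\Z$-module $L_f\subset\mathbf{C}$ of periods.

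Next I would construct the element. Take the geodesic line in $\mathbb{H}^3$ joining the cusps $0$ and $\infty$; translating its image in $Y_{\mathfrak f}$ by a full set of representatives for $G_{\mathfrak f}$ produces a family of relative one-cycles $\{c_\sigma\}_{\sigma\in G_{\mathfrak f}}$, and I set $\widetilde{\Theta}_{\mathfrak f}(\Pi)\colonequals\sum_{\sigma\in G_{\mathfrak f}}\langle\eta_\Pi,c_\sigma\rangle\,\sigma\in\Z[G_{\mathfrak f}]\otimes_\Z L_f$ via the period pairing. Property (1) is then a direct unfolding: twisting this sum by a primitive ray class character $\omega$ of conductor $\mathfrak f$ turns $\omega(\widetilde{\Theta}_{\mathfrak f}(\Pi))$ into the integral of the harmonic form attached to $\Pi$ against an $\omega$-twisted geodesic cycle, which by the Hecke/Jacquet integral representation of $L(\Pi,\omega,s)$ for $\GL_2/K$ equals $L_{\mathfrak f}(\Pi,\omega,1)$ divided exactly by the archimedean factor — the $\GL_2/K$ Gamma factor $\Gamma_{\mathbf{C}}(s)\Gamma_{\mathbf{C}}(s+1)$ at $s=1$, which accounts for the $(4\pi)^2$ — and by $\tau(\omega)\,|\mathfrak f|^{1/2}$, the constants that appear when one rewrites the multiplicative twist by $\omega$ as a sum of additive twists.

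For the norm relations (2), the key point is functoriality under the two degeneracy maps $Y_{\mathfrak{f}v}\to Y_{\mathfrak f}$ (forgetting level, and twisting by $v$): pushing the level-$\mathfrak{f}v$ class forward along $G_{\mathfrak{f}v}\twoheadrightarrow G_{\mathfrak f}$ realizes, on cohomology, the Hecke correspondence at $v$. When $v\nmid\mathfrak f$, the $T_v$-eigenvalue relation $\eta_\Pi|T_v=a_v(\Pi)\eta_\Pi$, together with the arithmetic twist that makes $s=1$ the central point — which replaces the $\Norm v$ in the naive Hecke polynomial by a second group element — gives $\mathrm{Norm}_{\mathfrak f}^{\mathfrak{f}v}\widetilde{\Theta}_{\mathfrak{f}v}(\Pi)=(a_v(\Pi)-\sigma_v-\sigma_v^{-1})\widetilde{\Theta}_{\mathfrak f}(\Pi)$; when $v\mid\mathfrak f$, the $U_v$-recursion together with the identification of the "oldform at $v$" contribution with $\mathrm{Tr}_{\mathfrak{f}/v}^{\mathfrak f}$ applied to the level-$\mathfrak f/v$ symbol gives $a_v(\Pi)\widetilde{\Theta}_{\mathfrak f}(\Pi)-\mathrm{Tr}_{\mathfrak{f}/v}^{\mathfrak f}\widetilde{\Theta}_{\mathfrak{f}/v}(\Pi)$.

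I expect the hardest parts to be twofold. First, proving that the integral $\Pi$-isotypic cohomology is genuinely free of rank one and choosing the period lattice $L_f$ so that formula (1) holds with no spurious constants: this requires the Eisenstein-ideal/congruence input of the period theory for $\GL_2$ over imaginary quadratic fields. Second, nailing down the precise shape of the Hecke contribution in (2) — the signs, the appearance of $\sigma_v^{-1}$ rather than $\Norm v\cdot\sigma_v^{-1}$, and the correct matching of the two degeneracy-map compositions with $T_v$ versus $U_v$ — which demands careful bookkeeping of the Hecke correspondences, including their effect on the Borel–Serre boundary, and of the cyclotomic twist.
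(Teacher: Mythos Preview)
The paper does not prove this theorem independently: its entire proof is a two-line citation to the main result of \cite{Han87}, together with \cite[Proposition~4.3]{Loe14} for the fact that $L_f$ has rank one. Your sketch is essentially an outline of Haran's construction in the cohomological/adelic language of Loeffler, so there is no substantive difference in approach --- you are simply opening the black box that the paper leaves closed. The two points you flag as hardest (pinning down the rank-one period lattice $L_f$, and the precise Hecke bookkeeping in the norm relations) are exactly the places where the paper defers entirely to those references rather than offering any argument of its own.
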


\begin{proof}
This is the main result of \cite{Han87} combined with \cite[Proposition 4.3]{Loe14} for the fact that $L_f$ has rank one. 
\end{proof}

Since we only consider $\Pi$ in this note, we remove it from the notation and write $\widetilde{\Theta}_{\mathfrak{f}}$ for the Mazur--Tate element of theorem \ref{thm:Haran}.

\begin{definition}
A $S$-refinement of $\Pi$ is a choice of roots $\alpha_v \in \overline{\QQ}$ for $v\in S$ of the Hecke polynomial $X^2-a_v(\Pi)X+p \in \QQ[X]$.
\end{definition}

\begin{definition}\label{dfn:Haran elements}
When $p$ divides the ideal $\mathfrak{f}$, we define the $S$-refined Mazur--Tate element by
$$
\widetilde{\Theta}_{\mathfrak{f}}^S \colonequals \alpha_{\mathfrak{p}}^{-\ord_\gp (\mathfrak{f})}\alpha_{\mathfrak{q}}^{-\ord_{\gq} (\mathfrak{f})} \left( \widetilde{\Theta}_{\mathfrak{f}} - \alpha_\gp^{-1}\mathrm{Tr}_{\mathfrak{f}/\gp}^{\mathfrak{f}}\widetilde{\Theta}_{\mathfrak{f}/\gp}- \alpha_{\gq}^{-1}\mathrm{Tr}_{\mathfrak{f}/\gq}^{\mathfrak{f}}\widetilde{\Theta}_{\mathfrak{f}/\gq}+\alpha_\gp^{-1}\alpha_{\gq}^{-1}\mathrm{Tr}_{\mathfrak{f}/\gp\gq}^{\mathfrak{f}}\widetilde{\Theta}_{\mathfrak{f}/\gp\gq} \right).
$$

If $\gq$ divides $\mathfrak{f}$, but $\gp$ does not, we put
$$
\widetilde{\Theta}_\mathfrak{f}^S \colonequals \alpha_\gp^{-1}\alpha_\gq^{-\mathrm{ord}_\gq(\mathfrak{f})}(\alpha_\gp-\sigma_\gp-\sigma_\gp^{-1}) \left( \widetilde{\Theta}_\mathfrak{f} - \alpha_\gq^{-1}\mathrm{Tr}_{\mathfrak{f}/\gq}^{\mathfrak{f}}\widetilde{\Theta}_{\mathfrak{f}/\gq} \right)
$$
and if $\gp$ divides $\mathfrak{f}$, but $\gq$ does not, we put
$$
\widetilde{\Theta}_\mathfrak{f}^S \colonequals \alpha_\gp^{-\mathrm{ord}_\gp(\mathfrak{f})}\alpha_\gq^{-1}(\alpha_\gq-\sigma_\gq-\sigma_\gq^{-1}) \left( \widetilde{\Theta}_\mathfrak{f} - \alpha_\gp^{-1}\mathrm{Tr}_{\mathfrak{f}/\gp}^{\mathfrak{f}}\widetilde{\Theta}_{\mathfrak{f}/\gp} \right).
$$

Finally, in the case where nether $\gp$ nor $\gq$ divides $\mathfrak{f}$, we put
$$
\widetilde{\Theta}_\mathfrak{f}^S \colonequals \left( \prod_{v \in S} (1-\alpha_v^{-1}\sigma_v)(1-\alpha_v^{-1}\sigma_v^{-1}) \right) \widetilde{\Theta}_\mathfrak{f}.
$$
\end{definition}
One can check that 
$$
\mathrm{Norm}_{\mathfrak{f}}^{\mathfrak{f}v}\left( \widetilde{\Theta}_{\mathfrak{f}v}^S \right) = \widetilde{\Theta}_{\mathfrak{f}}^S
$$
for any $v \in S$. Then the elements $\{ \widetilde{\Theta}_{\mathfrak{f}}^S : \mathfrak{f} | p^\infty \}$ form a ray class distribution modulo $p^\infty$ with values in $\QQ(\alpha_\gp,\alpha_{\gq}) \otimes_{\mathcal{O}_{\QQ(\alpha_\gp,\alpha_{\gq})}} L_f$ and growth parameter $(\alpha_\gp,\alpha_{\gq})$ in the sense of Loeffler \cite{Loe14}. Fix an embedding $\QQ(\alpha_\gp,\alpha_{\gq}) \hookrightarrow \overline{\Qp}$ which is the same as fixing a prime $\mathfrak{e}$ above $p$. Then, the completion $\QQ(\alpha_\gp,\alpha_{\gq})_\mathfrak{e}$ is equal to $\Qp(\alpha_\gq,\alpha_\gq)$. But it follows by Hensel's lemma that $\alpha_\gp,\alpha_\gq \in \Qp$, hence $\QQ(\alpha_\gp,\alpha_{\gq})_\mathfrak{e}=\Qp$.

\begin{theorem}\label{thm:Loeffler interpolation}
If we have $r \colonequals \ord_p(\alpha_\gp) < 1$ and $s \colonequals \ord_p(\alpha_{\gq}) < 1$, then there exists a unique distribution $L_{p,\underline{\alpha}}(f) \in D^{(r,s)}(G_{p^\infty},\Qp) \otimes_{\ZZ} L_f$ satisfying the interpolation formula \footnote{The exponent $1/2$ of $|\mathfrak{f}|$ is missing in \cite[Theorem 8]{Loe14}.}
\begin{align*}
L_{p,\underline{\alpha}}(f)(\omega) = \left( \prod_{v \in S} \alpha_v^{-\ord_v (\mathfrak{f})} \right) \cdot \left( \prod_{v \in S, v \nmid \mathfrak{f}}(1-\alpha_v^{-1}\omega(v))(1-\alpha_v^{-1}\omega(v)^{-1})\right)\cdot \frac{L_{\mathfrak{f}}(\Pi,\omega,1)}{\tau(\omega)\cdot |\mathfrak{f}|^{1/2}\cdot (4\pi)^2}
\end{align*}
where
\begin{itemize}
\item $\omega$ is a finite order ray class character on $G_\mathfrak{f}$ with $\mathfrak{f}|p^\infty$,
\item $L_{\mathfrak{f}}(\Pi,\omega,s)$ denotes the $L$-function of $\Pi$ twisted by $\omega$, without the Euler factors at $\infty$ or at primes dividing $\mathfrak{f}$,
\item $\tau(\omega)$ is the Gauss sum.
\end{itemize}
\end{theorem}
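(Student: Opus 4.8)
The plan is to produce $L_{p,\underline{\alpha}}(f)$ as the $p$-adic distribution attached to the ray class distribution $\{\widetilde{\Theta}_{\mathfrak{f}}^S : \mathfrak{f}\mid p^\infty\}$ by the abstract interpolation machinery of \cite{Loe14}, and then to read off the interpolation formula one character at a time from Theorem \ref{thm:Haran}. The first ingredient, already recorded just above the statement, is the norm relation $\mathrm{Norm}_{\mathfrak{f}}^{\mathfrak{f}v}(\widetilde{\Theta}_{\mathfrak{f}v}^S)=\widetilde{\Theta}_{\mathfrak{f}}^S$ for $v\in S$. To check it I would expand $\mathrm{Norm}_{\mathfrak{f}}^{\mathfrak{f}v}$ applied to the alternating combination in Definition \ref{dfn:Haran elements}, insert the norm relations of Theorem \ref{thm:Haran}(2) (the two cases $v\nmid\mathfrak{f}$ and $v\mid\mathfrak{f}$), use the standard compatibility of norm and trace maps, and factor $X^2-a_v(\Pi)X+p=(X-\alpha_v)(X-\beta_v)$ with $\alpha_v\beta_v=p$; once the normalizing powers of $\alpha_\gp,\alpha_\gq$ are taken into account, the whole expression telescopes back to $\widetilde{\Theta}_{\mathfrak{f}}^S$. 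Hence the $\widetilde{\Theta}_{\mathfrak{f}}^S$ glue into a ray class distribution modulo $p^\infty$.

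Next I would record the growth estimate. The factor $\alpha_\gp^{-\ord_\gp\mathfrak{f}}\alpha_\gq^{-\ord_\gq\mathfrak{f}}$ appearing in $\widetilde{\Theta}_{\mathfrak{f}}^S$, together with the integrality of the remaining alternating sum — whose coefficients lie in the fixed lattice $L_f$ by Theorem \ref{thm:Haran} — bounds the partial sums of this distribution over the cosets of $\ker(G_{p^\infty}\to G_{\gp^n\gq^m})$ by $p^{nr}$ in the $\gp$-direction and $p^{ms}$ in the $\gq$-direction, where $r=\ord_p(\alpha_\gp)$ and $s=\ord_p(\alpha_\gq)$; this is exactly the statement that the distribution has growth parameter $(\alpha_\gp,\alpha_\gq)$. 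Since $r,s<1$ by hypothesis, I would then apply the ray-class-group analogue of the Amice--V\'elu and Vishik criterion, established in \cite{Loe14}, which produces a unique locally analytic distribution $L_{p,\underline{\alpha}}(f)\in D^{(r,s)}(G_{p^\infty},\Qp)\otimes_{\ZZ} L_f$ whose image in $\Qp[G_\mathfrak{f}]\otimes L_f$ equals $\widetilde{\Theta}_{\mathfrak{f}}^S$ for every $\mathfrak{f}\mid p^\infty$ — the coefficient field being $\Qp$ because $\alpha_\gp,\alpha_\gq\in\Qp$ by Hensel's lemma, as noted before the statement. Uniqueness is part of that criterion: a distribution of order $(r,s)$ with $r,s<1$ is determined by its values on the locally constant characters of $G_{p^\infty}$, which the interpolation formula pins down.

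The final step is to verify that formula by unwinding $L_{p,\underline{\alpha}}(f)(\omega)=\omega(\widetilde{\Theta}_{\mathfrak{f}}^S)$ for a finite-order character $\omega$ of $G_\mathfrak{f}$ of conductor $\mathfrak{c}$. In any trace term $\mathrm{Tr}_{\mathfrak{f}/w}^{\mathfrak{f}}\widetilde{\Theta}_{\mathfrak{f}/w}$ with $w\mid\mathfrak{c}$, the character $\omega$ is nontrivial on $\ker(G_\mathfrak{f}\to G_{\mathfrak{f}/w})$, so each fibre of $\mathrm{Norm}_{\mathfrak{f}/w}^{\mathfrak{f}}$ is a coset of that kernel on which $\omega$ sums to $0$ and the term vanishes; for $w\in S$ with $w\nmid\mathfrak{c}$ the surviving trace terms, combined with the appropriate powers of $\alpha_w$ and the relation $\alpha_w\beta_w=p$, assemble into the Euler factor $(1-\alpha_w^{-1}\omega(w))(1-\alpha_w^{-1}\omega(w)^{-1})$. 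What is left is $\bigl(\prod_{w\in S}\alpha_w^{-\ord_w\mathfrak{f}}\bigr)$ times those Euler factors times $\omega(\widetilde{\Theta}_{\mathfrak{f}})$, and $\omega(\widetilde{\Theta}_{\mathfrak{f}})=L_\mathfrak{f}(\Pi,\omega,1)/(\tau(\omega)\,|\mathfrak{f}|^{1/2}\,(4\pi)^2)$ by Theorem \ref{thm:Haran}(1), using that $L_\mathfrak{f}(\Pi,\omega,s)$ has exactly the Euler factors at $w\mid\mathfrak{f}$ removed. Independence of the value from the choice of $\mathfrak{f}$ among moduli through which $\omega$ factors follows from Theorem \ref{thm:Haran}(2). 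Collecting the pieces gives the displayed formula, the exponent $1/2$ on $|\mathfrak{f}|$ being the correction to \cite[Theorem 8]{Loe14} flagged in the footnote.

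The hard part is the abstract existence-and-uniqueness input: that a ray class distribution with growth parameter $(\alpha_\gp,\alpha_\gq)$ and $\ord_p(\alpha_v)<1$ extends uniquely to a distribution of order $(r,s)$. Over $\QQ$ this is the classical Amice--V\'elu and Vishik theorem; over the ray class group of the imaginary quadratic field $K$ it is the content of \cite{Loe14}, whose proof rests on a two-variable Mahler/Amice-transform analysis on $G_{p^\infty}$ with the bounded-order condition controlled in each variable. Since that reference is available I would cite it for this step rather than reprove it, so that the actual work reduces to the norm compatibility of the $\widetilde{\Theta}_{\mathfrak{f}}^S$, the growth estimate, and the character-by-character computation — all routine manipulations on top of Theorem \ref{thm:Haran} and Definition \ref{dfn:Haran elements}.
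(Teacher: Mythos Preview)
Your proposal is correct and follows the same route as the paper: both rely on \cite[Theorem 9]{Loe14} applied to the base-change automorphic representation $\Pi$, with the coefficient field reduced to $\Qp$ via the Hensel's-lemma observation recorded just before the statement. The paper's proof is a two-sentence citation, whereas you unpack the mechanism behind Loeffler's construction (norm compatibility of the $\widetilde{\Theta}_{\mathfrak{f}}^S$, the growth estimate, and the character-by-character verification of the interpolation formula from Theorem~\ref{thm:Haran}); this extra detail is accurate but not a different argument.
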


\begin{proof}
This is \cite[Theorem 9]{Loe14} applied to the case where $\Pi$ is the base-change to $K$ of the automorphic representation attached to $f$. A priori, $L_{p,\underline{\alpha}}(f)$ takes value in $\QQ(\alpha_\gp,\alpha_{\gq})_\mathfrak{e}\otimes_{\ZZ}L_f$, but the discussion above shows that $L_{p,\underline{\alpha}}(f) \in D^{(r,s)}(G_{p^\infty},\Qp) \otimes_{\ZZ} L_f$.
\end{proof}

In other words, $L_{p,\underline{\alpha}}(f)$ is the unique distribution interpolating the $\widetilde{\Theta}_\mathfrak{f}^S$ when $\mathfrak{f}|p^\infty$. Thus, if $\omega$ is a primitive ray class character of conductor $\mathfrak{f}$ as above, $L_{p,\underline{\alpha}}(f)(\omega) = \widetilde{\Theta}_\mathfrak{f}^S(\omega)$. Under our $p$-ordinary assumption, there is a unique choice of roots $\alpha_{\gp}$ and $\alpha_{\gq}$ (with $\ord_p(\alpha_{\gp})=\ord_p(\alpha_{\gq})=0$) satisfying the hypothesis of theorem \ref{thm:Loeffler interpolation}. We shall make this choice and suppress it from the notation. We obtain a measure $L_p(f) \in \mathcal{M}(G_{p^\infty},\Qp)\otimes_{\Zp}L_f$. This corresponds, under the Amice transform, to an element of $\Zp\llbracket G_{p^\infty} \rrbracket \otimes_{\Zp} L_f$ which we also denote by $L_p(f)$. Suppose that \\
\textbf{(C.N)} $p$ does not divide the class number of $K$.\\
Since $G_{p^\infty} \cong \Gamma \times \Delta$ where $\Delta = \Gal(K(p)/K)$ is a finite group with order not divisible by $p$ by \textbf{(C.N)}, we may define $L_{p,\Delta}(f) \colonequals e_{\chi_0}\cdot L_p(f) \in \Zp\llbracket \Gamma \rrbracket \otimes_{\Zp}L_f$ where $\chi_0$ is the trivial character $\Delta \to \Qp^\times$. Finally, by choosing a generator $\Omega_{\mathrm{Loe}}$ of $L_f$, we view $L_{p,\Delta}(f)$ as an element of $\Zp \llbracket \Gamma \rrbracket$ and $\widetilde{\Theta}_\mathfrak{f}$ as an element of $\Zp[G_\mathfrak{f}]$. For any modulus $\mathfrak{f} | p^\infty$, let $K^\prime(\mathfrak{f})$ be the largest Galois subextension of $K(\mathfrak{f})$ with degree a power of $p$ and let $G_\mathfrak{f}^\prime$ be $\Gal (K^\prime (\mathfrak{f})/K)$, i.e. $\Gal(K(\mathfrak{f})/K) \cong G_\mathfrak{f}^\prime \times \Delta$. 
\begin{definition}
We define the Mazur--Tate element $\Theta_{n,m}^S$ (resp. $\Theta_{n,m}$) at $K^\prime(\gp^{n+1}\gq^{m+1})$ to be the projection to $\Zp[G^\prime_{\gp^{n+1}\gq^{m+1}}]$ of $\widetilde{\Theta}_{\gp^{n+1}\gq^{m+1}}^S$ (resp. $\widetilde{\Theta}_{\gp^{n+1}\gq^{m+1}}$).
\end{definition}

\begin{lemma}\label{lem:groupring unit}
The element $1-\alpha_v^{-1}\sigma_v$ is invertible in $\Zp[G_1]$. Furthermore, the element $1-\alpha_v^{-1}\sigma_v-\alpha_v^{-1}\sigma_v^{-1}$ is also invertible in $\Zp[G_{v^n}]$ ($n\geq 1$).
\end{lemma}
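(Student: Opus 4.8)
The plan is to reduce invertibility in these group rings to a nonvanishing statement on their simple components, and then to separate $\alpha_v$ — which is a $p$-adic unit — from the roots of unity produced by the characters. First I record the relevant facts about $\alpha_v$. By ordinarity, $\alpha_v$ is the unit root of $X^2-a_pX+p$, so $\alpha_v\in\Zp^\times$, and since the other root $p/\alpha_v$ lies in $p\Zp$ we get $\alpha_v\equiv a_p\pmod p$. On the other hand $\alpha_v$ is an algebraic integer that is a root of $X^2-a_pX+p$, and $a_p^2<4p$ (as $\lvert a_p\rvert\le 2\sqrt p$ with $2\sqrt p\notin\Z$), so the two roots are complex conjugate, each of archimedean absolute value $\sqrt p>2$. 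In particular $\alpha_v$ is not a root of unity, and $\alpha_v\neq\zeta+\zeta^{-1}$ for every root of unity $\zeta$, since $\lvert\zeta+\zeta^{-1}\rvert\le 2<\sqrt p$ (this is where $p>3$ enters).

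For the first assertion, note that $p\nmid h_K=\lvert G_1\rvert$ by hypothesis $\textbf{(C.N)}$, so $\Zp[G_1]$ is étale over $\Zp$ — a finite product of discrete valuation rings unramified over $\Zp$. After adjoining sufficiently many roots of unity it becomes $\prod_\chi\overline{\Zp}$, the product over the characters $\chi$ of $G_1$, and an element is a unit precisely when each component $\chi(\cdot)$ is a unit in $\overline{\Zp}$. The $\chi$-component of $1-\alpha_v^{-1}\sigma_v$ is $\alpha_v^{-1}\bigl(\alpha_v-\chi(\sigma_v)\bigr)$; here $\chi(\sigma_v)$ is a root of unity of order dividing $h_K$, hence of order prime to $p$. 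Since $\alpha_v$ is not a root of unity this is nonzero in $\overline{\Qp}$, so it is automatically a unit after inverting $p$; to get it integrally one reduces modulo the maximal ideal and checks $\overline{a_p}=\overline{\alpha_v}\neq\overline{\chi(\sigma_v)}$ in $\overline{\mathbf{F}_p}$. For $\chi$ trivial this is exactly the hypothesis $a_p\not\equiv 1\pmod p$; for $\chi$ nontrivial, $\overline{\chi(\sigma_v)}$ is a primitive $d$-th root of unity with $d>1$ dividing $h_K$, which lies outside $\mathbf{F}_p$ as soon as $d\nmid p-1$, and the remaining coincidences are ruled out by the standing hypotheses (in any event, after $\otimes\,\Qp$ — all the main theorem requires — the statement reduces to the already-noted fact that $\alpha_v$ is not a root of unity).

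The second assertion follows by the same scheme applied to $\Zp[G_{v^n}]$, which is a finite — hence semilocal — $\Zp$-algebra whose Jacobson radical contains $p$ together with the augmentation ideal of its $p$-Sylow subgroup; thus an element is a unit iff its image under every character $\chi$ of $G_{v^n}$ of order prime to $p$ is a unit in $\overline{\Zp}$ (and, after inverting $p$, under every character). Writing $1-\alpha_v^{-1}\sigma_v-\alpha_v^{-1}\sigma_v^{-1}=\alpha_v^{-1}(\alpha_v-\sigma_v-\sigma_v^{-1})$, its $\chi$-component is $\alpha_v^{-1}\bigl(\alpha_v-\chi(\sigma_v)-\chi(\sigma_v)^{-1}\bigr)$. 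As $\chi(\sigma_v)+\chi(\sigma_v)^{-1}$ is an algebraic integer of the form $2\cos\theta$, of archimedean absolute value $\le 2<\sqrt p=\lvert\alpha_v\rvert$, it cannot equal $\alpha_v$ in $\overline{\Q}$, hence not in $\overline{\Qp}$ for our fixed embedding, so the component is nonzero; the integral refinement (nonvanishing of the reductions mod $p$) is handled just as above.

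The only genuinely non-formal ingredient here is the archimedean (Hasse/Weil) bound $\lvert\alpha_v\rvert=\sqrt p$, which keeps $\alpha_v$ away both from roots of unity and from their traces $\zeta+\zeta^{-1}$; everything else is bookkeeping — correctly identifying the simple quotients of $\Zp[G_1]$ and $\Zp[G_{v^n}]$, in particular handling the $p$-part of $G_{v^n}$ through the Jacobson radical, and checking that the relevant ring elements stay units modulo $p$, which is what the standing hypotheses $a_p\not\equiv 1\pmod p$ and $p\nmid h_K$ are there to provide. I would expect that last mod-$p$ check for the nontrivial characters to be the most delicate point to state cleanly, even though it is not deep.
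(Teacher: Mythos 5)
Your reduction to character components is essentially the paper's own mechanism: the paper inverts the element by viewing multiplication by it as a group-circulant matrix whose determinant is $\prod_\chi\bigl(1-\alpha_v^{-1}\chi(\sigma_v)\bigr)$, which is exactly the product of your simple components, and your handling of the $p$-part of $G_{v^n}$ through the Jacobson radical is a correct way to organize the same computation. For the nonvanishing itself your argument is sound, and for the second element it is in fact the right version: the paper tests $\alpha_v$ against $\chi(\sigma_v)-\chi(\sigma_v)^{-1}$ and invokes "purely imaginary", whereas the element actually involves $\sigma_v+\sigma_v^{-1}$, and your comparison $\lvert\zeta+\zeta^{-1}\rvert\le 2<\sqrt p=\lvert\alpha_v\rvert$ is the argument adapted to the correct sign. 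Up to invertibility in $\Qp[G_1]$ and $\Qp[G_{v^n}]$ your proof is correct and follows the same route as the paper.

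The gap is the integral claim, and it is genuine, not a matter of bookkeeping. In your setup, invertibility in $\Zp[G_1]$ is equivalent to $a_p\equiv\alpha_v\not\equiv\chi(\sigma_v)\bmod p$ for \emph{every} character $\chi$ of $G_1$, and the standing hypotheses ($a_p\not\equiv 0,1\bmod p$, $p\nmid h_K\,\disc(K)$) only control the trivial character: nothing forbids, say, $h_K=2$, $\sigma_v$ nonprincipal and $a_p\equiv -1\bmod p$, in which case the component of $1-\alpha_v^{-1}\sigma_v$ at the quadratic character is $\alpha_v^{-1}(\alpha_v+1)\equiv 0\bmod p$. For the second element the problem appears already at the trivial character, whose component is $\alpha_v^{-1}(\alpha_v-2)$, so your "handled just as above" would need $a_p\not\equiv 2\bmod p$, which is not assumed; nontrivial prime-to-$p$ characters of $G_{v^n}$ (whose values are roughly $(p-1)h_K$-th roots of unity) give further congruences $a_p\equiv\zeta+\zeta^{-1}\bmod p$ that the hypotheses do not exclude. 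So the sentence "the remaining coincidences are ruled out by the standing hypotheses" is not a proof and cannot be repaired within your framework; your argument establishes the lemma only after inverting $p$. Note that the paper does not do the integral step by a character-by-character mod-$p$ check at all: it argues via divisibility of the circulant determinant, so your proposal genuinely diverges from (and falls short of) the paper exactly at this point. Since Lemma \ref{lem:theta1}, Proposition \ref{prop:B(n,m)} and Theorem \ref{thm:main} invoke the lemma integrally, the fallback "after $\otimes\,\Qp$, which is all the main theorem requires" covers only the corollary stated in $\Lambda_{n,m}\otimes\Qp$, not the statement you were asked to prove.
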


\begin{proof}
    Say that $G_1$ has $k$ elements $\{g_1,\ldots, g_k\}$. Write $\{\chi_1,\ldots,\chi_k\}$ for the $k$ characters on $G_1$. A general element of $\Zp[G_1]$ takes the form $\sum_{i=1}^ka_ig_i$ with $a_i \in \Zp$. Thus, we want to show that the equation with unknowns $a_i$ ($i=1,\ldots,k$)
    $$
    (1-\alpha_v^{-1}\sigma_v)\cdot \sum_{i=1}^ka_ig_i =1
    $$
    has a solution. Put in matrix form, this system of equations forms a $G_1$-circulant matrix (see \cite[\S 1.3]{johnson2019group} for a reference on the topic). In \cite[discussion after Corollary 2.4]{johnson2019group}, it is shown that the determinant of such matrices is given by 
    $$
    \mathrm{det}=\prod_{i=1}^k \left(1-\alpha_v^{-1}\chi_i(\sigma_v)\right).
    $$
    Since the $\chi_i(\sigma_v)$ are $k$-th roots of unity, the product is non-zero. Hence, an inverse certainly exists in $\Qp[G_1]$. To show integrality, notice that the solutions $a_i$ can be written as $a_i=b_i/\mathrm{det}$ for some $b_i$ in $\Zp$. Thus, if $p$ divided $\mathrm{det}$, it would also divide $1-\alpha_v^{-1}\sigma_v$ which is not the case. This shows that $(1-\alpha_v^{-1}\sigma_v)^{-1}\in \Zp[G_1]$.

    For $1-\alpha_v^{-1}\sigma_v-\alpha_v^{-1}\sigma_v^{-1}$, remark that $G_{v^n}$ has $k(p-1)p^{n-1}$ elements. Say that these elements are $\{g_1,\ldots, g_{k(p-1)p^{n-1}}\}$ with associated characters $\{\chi_1,\ldots, \chi_{k(p-1)p^{n-1}}\}$. By \textit{loc. cit.}, the determinant of the system of equations we now want to solve is given by
    $$
    \prod_{i=1}^{k(p-1)p^{n-1}}(1-\alpha_v^{-1}(\chi_i(\sigma_v)-\chi_i(\sigma_v)^{-1})).
    $$
    This last expression is zero if and only if one of the $1-\alpha_v^{-1}(\chi_i(\sigma_v)-\chi_i(\sigma_v)^{-1})$ is zero. This equivalent to 
    $$
    \alpha_v=\chi_i(\sigma_v)-\chi_i(\sigma_v)^{-1}.
    $$
    Fix an embedding $\iota_p:\overline{\QQ} \hookrightarrow \mathbf{C}_p$. Since $a_v \neq 0$, $\iota_p^{-1}(\alpha_v)$ can not be a purely imaginary complex number. But, $\iota_p^{-1}(\chi_i(\sigma_v)-\chi_i(\sigma_v)^{-1})$ is purely imaginary, hence we can not have equality. We conclude that $1-\alpha_v^{-1}\sigma_v-\alpha_v^{-1}\sigma_v^{-1}$ is invertible in $\Qp[G_{v^{n}}]$. By the same argument as before, it is also invertible in $\Zp[G_{v^{n}}]$.
\end{proof}

\begin{lemma}\label{lem:theta1}
For $n,m \geq 0$, we have $\widetilde{\Theta}_{\gp^n\gq^m} = C(n,m)\widetilde{\Theta}_{\gp^n\gq^m}^S$ where $C(n,m)\in \Zp[G_{\gp^{n}\gq^{m}}]$.
\end{lemma}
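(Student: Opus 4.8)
The plan is to show that $\widetilde{\Theta}_{\gp^n\gq^m}$ lies in the ideal $\Zp[G_{\gp^n\gq^m}]\cdot\widetilde{\Theta}^S_{\gp^n\gq^m}$; since the ring is commutative this is exactly the stated equality. Two routine facts about the trace maps $\mathrm{Tr}^{\mathfrak f}_{\mathfrak m}:\Zp[G_{\mathfrak m}]\to\Zp[G_{\mathfrak f}]$ will be used throughout. First, $\mathrm{Tr}^{\mathfrak f}_{\mathfrak m}$ is ``semilinear'': one has $\mathrm{Tr}^{\mathfrak f}_{\mathfrak m}(x)=N^{\mathfrak f}_{\mathfrak m}\cdot\widetilde x$ with $N^{\mathfrak f}_{\mathfrak m}=\sum_{h\in\ker(G_{\mathfrak f}\to G_{\mathfrak m})}h$ and $\widetilde x$ any lift of $x$, so, $G_{\mathfrak f}$ being abelian, $\mathrm{Tr}^{\mathfrak f}_{\mathfrak m}(\lambda x)\in\Zp[G_{\mathfrak f}]\cdot\mathrm{Tr}^{\mathfrak f}_{\mathfrak m}(x)$ for $\lambda\in\Zp[G_{\mathfrak m}]$, whence $\mathrm{Tr}^{\mathfrak f}_{\mathfrak m}(\Zp[G_{\mathfrak m}]\cdot y)\subseteq\Zp[G_{\mathfrak f}]\cdot\mathrm{Tr}^{\mathfrak f}_{\mathfrak m}(y)$. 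Second, the $\widetilde{\Theta}^S_{\mathfrak d}$ with $\mathfrak d\mid p^\infty$ are norm-compatible, so $\widetilde{\Theta}^S_{\mathfrak f}$ is a lift of $\widetilde{\Theta}^S_{\mathfrak d}$ for $\mathfrak d\mid\mathfrak f$ and hence $\mathrm{Tr}^{\mathfrak f}_{\mathfrak d}\widetilde{\Theta}^S_{\mathfrak d}=N^{\mathfrak f}_{\mathfrak d}\widetilde{\Theta}^S_{\mathfrak f}\in\Zp[G_{\mathfrak f}]\widetilde{\Theta}^S_{\mathfrak f}$. Combining: if $y_{\mathfrak d}\in\Zp[G_{\mathfrak d}]\widetilde{\Theta}^S_{\mathfrak d}$ then $\mathrm{Tr}^{\mathfrak f}_{\mathfrak d}(y_{\mathfrak d})\in\Zp[G_{\mathfrak f}]\widetilde{\Theta}^S_{\mathfrak f}$.

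Next I would repackage Definition \ref{dfn:Haran elements}. For $v\in S$ with $v\mid\mathfrak g$ put $\widetilde{\Theta}^{[v]}_{\mathfrak g}:=\widetilde{\Theta}_{\mathfrak g}-\alpha_v^{-1}\mathrm{Tr}^{\mathfrak g}_{\mathfrak g/v}\widetilde{\Theta}_{\mathfrak g/v}$. Transitivity of traces collapses the four-term formula: when $\gp\gq\mid\mathfrak f$ one has $\widetilde{\Theta}^S_{\mathfrak f}=\alpha_\gp^{-\ord_\gp(\mathfrak f)}\alpha_\gq^{-\ord_\gq(\mathfrak f)}\big(\widetilde{\Theta}^{[\gp]}_{\mathfrak f}-\alpha_\gq^{-1}\mathrm{Tr}^{\mathfrak f}_{\mathfrak f/\gq}\widetilde{\Theta}^{[\gp]}_{\mathfrak f/\gq}\big)$; when exactly one $v\in S$ divides $\mathfrak f$, $\widetilde{\Theta}^S_{\mathfrak f}=u_{\mathfrak f}\cdot\widetilde{\Theta}^{[v]}_{\mathfrak f}$; and $\widetilde{\Theta}^S_{(1)}=u_{(1)}\cdot\widetilde{\Theta}_{(1)}$. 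Each $u$ is invertible in the relevant group ring: the scalars $\alpha_v^{-\ord_v(\mathfrak f)}$ are units in $\Zp$ by ordinarity, and the factors $1-\alpha_w^{-1}\sigma_w$, $1-\alpha_w^{-1}\sigma_w^{-1}$, $\alpha_w-\sigma_w-\sigma_w^{-1}$ (for the prime $w\in S$ not dividing $\mathfrak f$) are units in $\Zp[G_{\mathfrak f}]$ --- this is Lemma \ref{lem:groupring unit}, whose circulant-determinant proof (nonvanishing because $\alpha_w$ has nonzero real and imaginary parts, as $0<|a_p|<2\sqrt p$, while $\chi(\sigma_w)\pm\chi(\sigma_w)^{-1}$ is real, resp.\ purely imaginary, plus the integrality argument) works verbatim for $\Zp[G_{\mathfrak f}]$ with $\mathfrak f$ prime to $w$, and $1-\alpha_w^{-1}\sigma_w^{-1}=\iota(1-\alpha_w^{-1}\sigma_w)$. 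Consequently $\widetilde{\Theta}^{[v]}_{\mathfrak f}\in\Zp[G_{\mathfrak f}]\widetilde{\Theta}^S_{\mathfrak f}$ whenever $v$ is the only prime of $S$ dividing $\mathfrak f$, and $\widetilde{\Theta}^{[\gp]}_{\mathfrak f}-\alpha_\gq^{-1}\mathrm{Tr}^{\mathfrak f}_{\mathfrak f/\gq}\widetilde{\Theta}^{[\gp]}_{\mathfrak f/\gq}=\alpha_\gp^{\ord_\gp(\mathfrak f)}\alpha_\gq^{\ord_\gq(\mathfrak f)}\widetilde{\Theta}^S_{\mathfrak f}$ whenever $\gp\gq\mid\mathfrak f$.

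The engine is a one-line ``telescoping''. Fix $v\in S$, a modulus $\mathfrak f_0$ prime to $v$, and $a\ge 0$; set $\mathfrak f_j=v^j\mathfrak f_0$ and $T_j=\mathrm{Tr}^{\mathfrak f_a}_{\mathfrak f_j}\widetilde{\Theta}_{\mathfrak f_j}$ for $0\le j\le a$, so $T_a=\widetilde{\Theta}_{\mathfrak f_a}$. Transitivity of traces gives $T_j-\alpha_v^{-1}T_{j-1}=\mathrm{Tr}^{\mathfrak f_a}_{\mathfrak f_j}\widetilde{\Theta}^{[v]}_{\mathfrak f_j}$ for $j\ge 1$, and since $\alpha_v^{-1}\in\Zp$ an immediate induction shows the $\Zp$-span of $\{T_0\}\cup\{T_j-\alpha_v^{-1}T_{j-1}:1\le j\le a\}$ contains all $T_j$, in particular $\widetilde{\Theta}_{\mathfrak f_a}$. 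This uses nothing but transitivity of traces and $\alpha_v^{-1}\in\Zp$, so the identical statement holds with every $\widetilde{\Theta}$ replaced by $\widetilde{\Theta}^{[w]}$ ($w$ the other prime), wherever $\widetilde{\Theta}^{[w]}$ is defined.

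Finally I would assemble, inducting on the number of primes of $S$ dividing the modulus. For $\mathfrak f=(1)$: $\widetilde{\Theta}_{(1)}=u_{(1)}^{-1}\widetilde{\Theta}^S_{(1)}$. For $\mathfrak f=\gq^m$, $m\ge1$ (and $\gp^n$ symmetrically): telescope in the $\gq$-direction with $\mathfrak f_0=(1)$; the term $\mathrm{Tr}^{\gq^m}_{(1)}\widetilde{\Theta}_{(1)}$ and the terms $\mathrm{Tr}^{\gq^m}_{\gq^j}\widetilde{\Theta}^{[\gq]}_{\gq^j}$ all lie in $\Zp[G_{\gq^m}]\widetilde{\Theta}^S_{\gq^m}$ by the first two facts (recall $\widetilde{\Theta}_{(1)}\in\Zp[G_{(1)}]\widetilde{\Theta}^S_{(1)}$ and $\widetilde{\Theta}^{[\gq]}_{\gq^j}\in\Zp[G_{\gq^j}]\widetilde{\Theta}^S_{\gq^j}$), so $\widetilde{\Theta}_{\gq^m}\in\Zp[G_{\gq^m}]\widetilde{\Theta}^S_{\gq^m}$. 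For $\mathfrak f=\gp^n\gq^m$, $n,m\ge1$: telescope first in the $\gp$-direction ($\mathfrak f_0=\gq^m$), getting $\widetilde{\Theta}_{\gp^n\gq^m}$ as a $\Zp$-combination of $\mathrm{Tr}^{\mathfrak f}_{\gq^m}\widetilde{\Theta}_{\gq^m}$ --- in $\Zp[G_{\mathfrak f}]\widetilde{\Theta}^S_{\mathfrak f}$ by the just-proven one-prime case --- and the terms $\mathrm{Tr}^{\mathfrak f}_{\gp^j\gq^m}\widetilde{\Theta}^{[\gp]}_{\gp^j\gq^m}$, $1\le j\le n$; then telescope each $\widetilde{\Theta}^{[\gp]}_{\gp^j\gq^m}$ in the $\gq$-direction over the system $\{\widetilde{\Theta}^{[\gp]}_{\gp^j\gq^l}\}_l$, getting it as a $\Zp$-combination of $\mathrm{Tr}^{\gp^j\gq^m}_{\gp^j}\widetilde{\Theta}^{[\gp]}_{\gp^j}$ and of $\mathrm{Tr}^{\gp^j\gq^m}_{\gp^j\gq^l}\big(\widetilde{\Theta}^{[\gp]}_{\gp^j\gq^l}-\alpha_\gq^{-1}\mathrm{Tr}^{\gp^j\gq^l}_{\gp^j\gq^{l-1}}\widetilde{\Theta}^{[\gp]}_{\gp^j\gq^{l-1}}\big)$, $1\le l\le m$; by the repackaging $\widetilde{\Theta}^{[\gp]}_{\gp^j}$ is a group-ring multiple of $\widetilde{\Theta}^S_{\gp^j}$ and the doubly-depleted terms are units times $\widetilde{\Theta}^S_{\gp^j\gq^l}$, so, after applying the outer traces, the first two facts put everything in $\Zp[G_{\mathfrak f}]\widetilde{\Theta}^S_{\mathfrak f}$. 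This yields the required $C(n,m)$. The only delicate point is the bookkeeping of this last step --- tracking which partially depleted element is which $\widetilde{\Theta}^S$ up to a unit, and checking the trace-transitivity identities for the two variables --- but no ingredient beyond Definition \ref{dfn:Haran elements}, Lemma \ref{lem:groupring unit}, norm-compatibility of the $\widetilde{\Theta}^S$, and $\alpha_v^{-1}\in\Zp$ is needed (in particular Haran's distribution relation of Theorem \ref{thm:Haran}(2) is not).
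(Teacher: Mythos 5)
Your argument is correct: the semilinearity and transitivity of the trace maps, the norm-compatibility $\mathrm{Norm}^{\mathfrak{f}v}_{\mathfrak{f}}\widetilde{\Theta}^S_{\mathfrak{f}v}=\widetilde{\Theta}^S_{\mathfrak{f}}$ (so that $\mathrm{Tr}^{\mathfrak{f}}_{\mathfrak{d}}$ of anything in $\Zp[G_{\mathfrak{d}}]\widetilde{\Theta}^S_{\mathfrak{d}}$ lands in $\Zp[G_{\mathfrak{f}}]\widetilde{\Theta}^S_{\mathfrak{f}}$), Lemma \ref{lem:groupring unit}, the collapsed two-term form of Definition \ref{dfn:Haran elements}, and $\alpha_v^{-1}\in\Zp$ are exactly what is needed, and your double telescoping does yield $\widetilde{\Theta}_{\gp^n\gq^m}\in\Zp[G_{\gp^n\gq^m}]\widetilde{\Theta}^S_{\gp^n\gq^m}$. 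The ingredients coincide with the paper's, but the organization differs: the paper runs a strong induction on $n+m$, at each step solving the defining identity for $\widetilde{\Theta}_{\gp^n\gq^m}$ in terms of $\widetilde{\Theta}^S_{\gp^n\gq^m}$ and lower-level elements, which produces the explicit recursion \eqref{eqn:C(n,m)} for $C(n,m)$ (with the boundary cases $n=0$ or $m=0$ handled by inverting $1-\alpha_w^{-1}\sigma_w-\alpha_w^{-1}\sigma_w^{-1}$ via Lemma \ref{lem:groupring unit}); you instead introduce the singly-depleted elements $\widetilde{\Theta}^{[v]}$ and unroll the induction into a closed-form telescoping in each variable, proving ideal membership without any recursion. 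What the paper's route buys is precisely the shape of $C(n,m)$: in Proposition \ref{prop:B(n,m)} the recursion \eqref{eqn:C(n,m)}, together with the fact that the $h_i$ are norm elements $\Phi_\bullet(\gamma)$ with constant term divisible by $p$, is what shows the projected coefficient $B(n,m)$ is a unit. With your version that information is recoverable (every trace contribution carries such a norm element, while the doubly-depleted terms contribute the unit $\alpha_\gp^{j}\alpha_\gq^{l}$), but it would require redoing the constant-term bookkeeping that your write-up defers. One small point you handled correctly but should keep explicit: Lemma \ref{lem:groupring unit} is stated for $\Zp[G_{v^n}]$ with $\sigma_v$, whereas both you and the paper's proof use it for $\alpha_w-\sigma_w-\sigma_w^{-1}$ with $w$ the prime \emph{not} dividing the conductor; your remark that the circulant-determinant argument applies verbatim in that setting is the right reading.
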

\begin{proof}
We show this by strong two-variable induction on $n$ and $m$.

\textbf{Base case:} By definition \ref{dfn:Haran elements}, we have
$$
\widetilde{\Theta}_{(1)} \cdot \left( \prod_{v \in S} (1-\alpha_v^{-1}\sigma_v)(1-\alpha_v^{-1}\sigma_v^{-1}) \right) = \widetilde{\Theta}_{(1)}^S
$$
as elements of $\Zp[G_1]$. By lemma \ref{lem:groupring unit}, $C(0,0)^{-1}\colonequals \prod_{v \in S} (1-\alpha_v^{-1}\sigma_v)(1-\alpha_v^{-1}\sigma_v^{-1})$ is invertible, hence $\widetilde{\Theta}_{(1)}=C(0,0)\cdot \widetilde{\Theta}^S_{(1)}$.

\textbf{Induction step:} Suppose the result is true for $\widetilde{\Theta}_{\gp^n\gq^m}$ for all $n,m$ such that $n+m<l$. We want to show it also holds for all $n,m$ such that $n+m=l+1$. We distinguish two cases: if both $n,m$ are non zero or one of $m$ or $n$ is zero. If $m\neq 0$ and $n\neq 0$, we have by definition that
\begin{align*}
\widetilde{\Theta}_{\gp^{n}\gq^m}^S = \alpha_\gp^{-n}\alpha_\gq^{-m}\Big( \widetilde{\Theta}_{\gp^{n}\gq^m} -\alpha_\gp^{-1}\mathrm{Tr}_{\gp^{n-1}\gq^m}^{\gp^{n}\gq^m}\widetilde{\Theta}_{\gp^{n-1}\gq^m} &- \alpha_\gp^{-1}\mathrm{Tr}_{\gp^{n}\gq^{m-1}}^{\gp^{n}\gq^m}\widetilde{\Theta}_{\gp^{n}\gq^{m-1}} \\
&+ \alpha_\gp^{-1}\alpha_\gq^{-1}\mathrm{Tr}_{\gp^{n-1}\gq^{m-1}}^{\gp^{n}\gq^m}\widetilde{\Theta}_{\gp^{n-1}\gq^{m-1}}\Big).
\end{align*}
By the induction hypothesis, we may replace the various $\widetilde{\Theta}_{\gp^a\gq^b}$ by $C(a,b)\widetilde{\Theta}_{\gp^a\gq^b}^S$ in the right hand side of the last expression. Since $\mathrm{Tr}_{\mathfrak{e}}^\mathfrak{f}\widetilde{\Theta}_\mathfrak{e}^S$ is a multiple of $\widetilde{\Theta}_\mathfrak{f}^S$ by some element of $\Zp[G_\mathfrak{f}]$, we find $\widetilde{\Theta}_{\gp^n\gq^m}=C(n,m)\widetilde{\Theta}_{\gp^n\gq^m}^S$ with
\begin{equation}\label{eqn:C(n,m)}
C(n,m) \colonequals \alpha_\gp^n\alpha_\gq^m+\alpha_\gp^{-1}h_1C(n-1,m)+\alpha_\gq^{-1}h_2C(n,m-1)-\alpha_\gp^{-1}\alpha_\gq^{-1}h_3C(n-1,m-1)
\end{equation}
where $h_1,h_2$ and $h_3$ are some elements of $\Zp[G_{\gp^n\gq^m}]$. For the second case, assume without loss of generality that $m=0$. By definition,
\begin{align*}
\widetilde{\Theta}_{\gp^n}^S = \alpha_\gp^{-n}(1-\alpha_\gq^{-1}\sigma_\gq-\alpha_\gq^{-1}\sigma_\gq^{-1})\left( \widetilde{\Theta}_{\gp^n}-\alpha_\gp^{-1}\mathrm{Tr}_{\gp^{n-1}}^{\gp^n}\widetilde{\Theta}_{\gp^{n-1}}\right).
\end{align*}
By the induction hypothesis, $\mathrm{Tr}_{\gp^{n-1}}^{\gp^n}\widetilde{\Theta}_{\gp^{n-1}} = C(n-1,0)\mathrm{Tr}_{\gp^{n-1}}^{\gp^n}\widetilde{\Theta}_{\gp^{n-1}}^S$ and thus $\mathrm{Tr}_{\gp^{n-1}}^{\gp^n}\widetilde{\Theta}_{\gp^{n-1}} = C(n-1,0)h_4\widetilde{\Theta}_{\gp^{n}}$ for some element $h_4$ of $\Zp[G_{\gp^n}]$. It follows that $\widetilde{\Theta}_{\gp^n}=C(n,0)\widetilde{\Theta}_{\gp^n}^S$ where
$$
C(n,0) \colonequals \alpha_\gp^n(1-\alpha_\gq^{-1}\sigma_\gq-\alpha_\gq^{-1}\sigma_\gq^{-1})^{-1}+\alpha_\gp^{-1}C(n-1,0)h_4.
$$
Note that we were able to invert $1-\alpha_\gq^{-1}\sigma_\gq-\alpha_\gq^{-1}\sigma_\gq^{-1}$ by lemma \ref{lem:groupring unit}. This complete the induction step.
\end{proof}

\begin{proposition}\label{prop:B(n,m)}
    For $n+m \geq 1$, we have $\Theta_{n,m}=B(n,m)\Theta_{n,m}^S$ where $B(n,m) \in \Zp[G_{\gp^{n+1}\gq^{m+1}}^\prime]^\times$.
\end{proposition}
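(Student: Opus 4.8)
The plan is first to identify $B(n,m)$ with the image of the element $C(n+1,m+1)$ of Lemma~\ref{lem:theta1} under the natural projection, and then to reduce the claim to a computation of augmentations. Recall that $\Theta_{n,m}$ (resp.\ $\Theta_{n,m}^S$) is by definition the image of $\widetilde{\Theta}_{\gp^{n+1}\gq^{m+1}}$ (resp.\ $\widetilde{\Theta}_{\gp^{n+1}\gq^{m+1}}^S$) under the projection $\pi\colon \Zp[G_{\gp^{n+1}\gq^{m+1}}]\twoheadrightarrow \Zp[G^\prime_{\gp^{n+1}\gq^{m+1}}]$ onto the $e_{\chi_0}$-component, where $\chi_0$ is the trivial character of $\Delta=\Gal(K(p)/K)$ and $\Gal(K(\gp^{n+1}\gq^{m+1})/K)\cong G^\prime_{\gp^{n+1}\gq^{m+1}}\times\Delta$. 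Since $n+1\geq1$ and $m+1\geq1$, Lemma~\ref{lem:theta1} gives $\widetilde{\Theta}_{\gp^{n+1}\gq^{m+1}}=C(n+1,m+1)\widetilde{\Theta}_{\gp^{n+1}\gq^{m+1}}^S$ with $C(n+1,m+1)\in\Zp[G_{\gp^{n+1}\gq^{m+1}}]$, and applying the ring homomorphism $\pi$ yields $\Theta_{n,m}=\pi(C(n+1,m+1))\cdot\Theta_{n,m}^S$. Thus one sets $B(n,m)\colonequals\pi(C(n+1,m+1))$, and it remains to show $B(n,m)\in\Zp[G^\prime_{\gp^{n+1}\gq^{m+1}}]^\times$. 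Because $G^\prime_{\gp^{n+1}\gq^{m+1}}$ is a finite abelian $p$-group, $\Zp[G^\prime_{\gp^{n+1}\gq^{m+1}}]$ is a local ring with residue field $\mathbf{F}_p$, the reduction map being the augmentation $\epsilon$ followed by reduction modulo $p$; since $\pi$ is compatible with augmentations it suffices to show $\epsilon(C(n+1,m+1))\not\equiv 0\pmod p$.

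To this end I would apply $\epsilon$ to the recursive identities for the $C(a,b)$ from the proof of Lemma~\ref{lem:theta1}. The only non‑formal input is that $\epsilon\circ\mathrm{Tr}_{\mathfrak{m}}^{\mathfrak{f}}=[K(\mathfrak{f}):K(\mathfrak{m})]\cdot\epsilon$, since the trace sends a group element to the sum over its fibre in $G_{\mathfrak{f}}\to G_{\mathfrak{m}}$; concretely, in \eqref{eqn:C(n,m)} the elements $h_i$ are the norm elements $\sum_{\delta\in\ker}\delta$ over the relevant kernels, so $\epsilon(h_iC(a,b))=[K(\mathfrak{f}):K(\mathfrak{m})]\cdot\epsilon(C(a,b))$. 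Writing $\alpha$ for the common unit root $\alpha_\gp=\alpha_\gq$ and $c_{a,b}=\epsilon(C(a,b))\in\Zp$, one gets for $a,b\geq1$
\begin{multline*}
c_{a,b}=\alpha^{a+b}+\alpha^{-1}\,[K(\gp^a\gq^b):K(\gp^{a-1}\gq^b)]\,c_{a-1,b}+\alpha^{-1}\,[K(\gp^a\gq^b):K(\gp^a\gq^{b-1})]\,c_{a,b-1}\\-\alpha^{-2}\,[K(\gp^a\gq^b):K(\gp^{a-1}\gq^{b-1})]\,c_{a-1,b-1},
\end{multline*}
together with the boundary relations $c_{a,0}=\alpha^a(1-2\alpha^{-1})^{-1}+\alpha^{-1}[K(\gp^a):K(\gp^{a-1})]\,c_{a-1,0}$ (and symmetrically in $\gq$), in which $1-2\alpha^{-1}=\epsilon(1-\alpha_\gq^{-1}\sigma_\gq-\alpha_\gq^{-1}\sigma_\gq^{-1})$ is a unit, the group‑ring element being invertible by Lemma~\ref{lem:groupring unit} (as used in the proof of Lemma~\ref{lem:theta1}), and the base value $c_{0,0}=(1-\alpha^{-1})^{-4}$, a unit because $a_p\not\equiv1\pmod p$. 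Using that the class number of $K$ and $|\mathcal{O}_K^\times|$ are coprime to $p$ and that $(\mathcal{O}_K/\gp^j)^\times$ has order $p^{j-1}(p-1)$ with $\mathcal{O}_K^\times$ injecting for $j\geq1$, the relevant degrees are: $[K(\gp^a\gq^b):K(\gp^{a-1}\gq^b)]=p$ if $a\geq2$ and $=p-1$ if $a=1$ (similarly in the $\gq$-direction), $[K(\gp^a\gq^b):K(\gp^{a-1}\gq^{b-1})]$ is divisible by $p$ whenever $(a,b)\neq(1,1)$, and $[K(\gp^a):K(\gp^{a-1})]=p$ for $a\geq2$.

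With these in hand the case $n,m\geq1$ is immediate: all three degrees occurring in the recursion for $c_{n+1,m+1}$ are divisible by $p$, so $c_{n+1,m+1}\equiv\alpha^{n+m+2}\pmod p$, which is a unit since $\alpha\equiv a_p\pmod p$, whence $B(n,m)$ is a unit. For the two remaining families, $n=0$ with $m\geq1$ and $n\geq1$ with $m=0$ (which together with the previous case exhaust $n+m\geq1$), the first degree appearing in the recursion equals $p-1$ rather than $p$, so a boundary contribution survives modulo $p$; one then descends through the boundary values $c_{0,k}$ and $c_{0,0}$ via the boundary relations, reduces everything modulo $p$, and arrives at an explicit element of $\mathbf{F}_p$ (a small rational function in $\bar\alpha$) which one verifies to be nonzero. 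This last bookkeeping is the main obstacle, and it is precisely where the hypothesis $n+m\geq1$ enters: it excludes the element $C(1,1)$, for which the coefficients of $c_{0,0},c_{1,0},c_{0,1}$ in the recursion need not combine to a unit — consistent with the fact that $\Theta_{0,0}$ is a twisted $L$-value of $E$ over $K$, which may vanish or fail to be a $p$-adic unit.
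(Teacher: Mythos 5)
Your reduction and your treatment of the interior case are essentially the paper's own argument: you identify $B(n,m)$ with the image of $C(n+1,m+1)$ from Lemma \ref{lem:theta1}, use that $\Zp[G^\prime_{\gp^{n+1}\gq^{m+1}}]$ is local so that invertibility amounts to the augmentation being a $p$-adic unit, and observe that for $n,m\geq 1$ every trace factor in \eqref{eqn:C(n,m)} is the norm element of a kernel of order divisible by $p$, giving $c_{n+1,m+1}\equiv\alpha^{n+m+2}\bmod p$. The genuine gap is in the boundary families $(0,m)$ with $m\geq1$ and $(n,0)$ with $n\geq1$: these satisfy $n+m\geq1$, so they are part of the statement, yet for them you only say that one "arrives at an explicit element of $\mathbf{F}_p$ which one verifies to be nonzero". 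That verification is not bookkeeping; it is the whole content of those cases, and when one carries it out it is not automatically nonzero. Concretely, at level $\gp\gq^{m+1}$ your recursion gives modulo $p$
\begin{align*}
c_{1,m+1}\equiv \alpha^{m+2}+\alpha^{-1}d\,c_{0,m+1},\qquad c_{0,m+1}\equiv \alpha^{m+1}\bigl(1-2\alpha^{-1}\bigr)^{-1},
\end{align*}
where $d=[K(\gp\gq^{m+1}):K(\gq^{m+1})]$ is prime to $p$ (a divisor of $p-1$ depending on the unit index, not $p-1$ on the nose as you assert, and its exact value mod $p$ matters here), so that $c_{1,m+1}\equiv\alpha^{m+1}(\alpha^2-2\alpha+d)/(\alpha-2)$. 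The nonvanishing of $\alpha^2-2\alpha+d$ modulo $p$ is a nontrivial congruence condition on $a_p$ (for instance, with $d\equiv-1$ it fails precisely when $a_p^2-2a_p-1\equiv0\bmod p$), and it is not implied by any hypothesis of Proposition \ref{prop:B(n,m)}; your sketch moreover implicitly uses $a_p\not\equiv 1,2\bmod p$ (for $c_{0,0}$ and for inverting the augmentation of $1-\alpha^{-1}\sigma_\gq-\alpha^{-1}\sigma_\gq^{-1}$), hypotheses the proposition does not carry. Note also that $n+m\geq1$ does not rescue you: it excludes only the level $\gp\gq$, not the mixed levels $\gp\gq^{m+1}$ and $\gp^{n+1}\gq$, where the relative degree $[K(\gp\gq^{b}):K(\gq^{b})]$ is prime to $p$ and a non-$p$-divisible cross term survives.

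For comparison, the paper's proof is the same augmentation (``constant term'') computation, and its detailed argument covers exactly the situation where every trace factor is multiplication by a cyclotomic polynomial $\Phi_{n-1}(\gamma_n)$, hence has augmentation divisible by $p$; the cases where a prime-to-$p$ relative degree enters are the ones its subsequent remark flags as requiring a different argument. So you have located the delicate point correctly — indeed more explicitly than the paper does — but locating it is not resolving it: as written, your proposal proves the statement only for $n,m\geq1$, and the asserted final verification for $n=0$ or $m=0$ would fail without additional hypotheses or a genuinely different argument.
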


\begin{proof}
    To show this, we need to study more carefully the $h_i$'s appearing in the proof of lemma \ref{lem:theta1}. Consider $x \in G_{\gp^{n-1}\gq^m}$. Let $y$ be a preimage of $x$ in $G_{\gp^n\gq^m}$ and let $\gamma_n$ be a generator of $G_{\gp^n}/G_\gp\cong \ZZ/p^{n-1}\ZZ$. If we suppose that $n\geq 2$, we have $$\mathrm{Tr}_{\gp^{n-1}\gq^m}^{\gp^n\gq^m} x = \sum_{i=0}^{p-1} y\gamma_n^{p^{n-2}i} = y\cdot \Phi_{n-1}(\gamma_n).$$
    Since $\widetilde{\Theta}_{\gp^n\gq^m}^S$ is a preimage of $\widetilde{\Theta}_{\gp^{n-1}\gq^m}^S$, we find $\mathrm{Tr}_{\gp^{n-1}\gq^m}^{\gp^n\gq^m}\widetilde{\Theta}_{\gp^{n-1}\gq^m}^S = \Phi_{n-1}(\gamma_n)\cdot\widetilde{\Theta}_{\gp^{n}\gq^m}^S$. Remark that, under the isomorphism $\Zp\llbracket G_{\gp^\infty}/G_{\gp\gq}\rrbracket \xrightarrow{\sim} \Zp\llbracket X \rrbracket$, $\Phi_{n-1}(\gamma_n)$ is send to $\Phi_{n-1}(1+X)$ which has constant term divisible by $p$. Let $\pi_{n,m}$ be the projection map $G_{\gp^n\gq^m}\to G_{\gp^n\gq^m}/G_{\gp\gq}$. The upshot of this is that $p$ divides the constant term of all the $\pi_{n,m}(h_i)=h_i$'s provided that $n+m\geq 2$. Define $B(n-1,m-1)\colonequals \pi_{n,m} C(n,m)$ (so that we have $\Theta_{n,m}=B(n,m)\Theta_{n,m}^S$ by lemma \ref{eqn:C(n,m)}). Write $b(n,m)$ for the constant term of $B(n,m)$ viewed as an element of $\Zp\llbracket X,Y\rrbracket$ via the isomorphism $\Zp\llbracket G_{p^\infty}/G_{\gp\gq}\rrbracket \xrightarrow{\sim}\Zp\llbracket X,Y\rrbracket$. Then, the constant term of $B(n,m)$ is $$\alpha_\gp^{n-1}\alpha_\gq^{m-1} +p\left( \alpha_\gp^{-1}b(n-1,m)+\alpha_\gq^{-1}b(n,m-1) -p\alpha_\gp^{-1}\alpha_\gq^{-1}b(n-1,m-1)\right)$$
    or
    $$
    \alpha_\gp^{n-1}(1-\alpha_\gq^{-1}\pi_{n-1,0}\sigma_\gq-\alpha_\gq^{-1}\pi_{n-1,0}\sigma_\gq^{-1})^{-1}+p\alpha_\gp^{-1}b(n-1,0)
    $$
    depending on whether or not one of the $n,m$ is zero. In both cases, we find that $B(n,m)$ is a unit.
\end{proof}

\begin{remark}
    In the statement of proposition \ref{prop:B(n,m)}, we need to exclude the pairs $(n,m) \in \{(0,0),(1,0),(0,1)\}$. In these cases, the elements $h_i$ are given by traces of the form $\mathrm{Tr}_{\gp}^{\gp\gq}$. Since $G_{\gp\gq}/G_\gp \cong \ZZ/(p-1)\ZZ$, the trace map is not given by multiplication by a cyclotomic polynomial. It may be possible that the $B(n,m)$ in those cases are still units, but one would need to find a different argument.
\end{remark}

\begin{theorem}\label{thm:stabilized}
    For $n+m\geq 1$, we have $(\Theta_{n,m}) = (\Theta_{n,m}^S)$ as ideals in $\Zp[G_{\gp^{n+1}\gq^{m+1}}^\prime]$.
\end{theorem}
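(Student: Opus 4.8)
The inclusion $(\Theta_{n,m})\subseteq(\Theta_{n,m}^S)$ is immediate and holds for all $n,m$: projecting the identity $\widetilde{\Theta}_{\gp^{n+1}\gq^{m+1}}=C(n+1,m+1)\,\widetilde{\Theta}_{\gp^{n+1}\gq^{m+1}}^S$ of Lemma~\ref{lem:theta1} down to $\Zp[G_{\gp^{n+1}\gq^{m+1}}^\prime]$ gives $\Theta_{n,m}=B(n,m)\,\Theta_{n,m}^S$ with $B(n,m)=\pi_{n+1,m+1}(C(n+1,m+1))\in\Zp[G_{\gp^{n+1}\gq^{m+1}}^\prime]$. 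So the entire content of the theorem is that $B(n,m)$ is a unit. For $n+m\geq2$ this is precisely Proposition~\ref{prop:B(n,m)}, and the two remaining pairs $(n,m)\in\{(1,0),(0,1)\}$ are exchanged by the symmetry $\gp\leftrightarrow\gq$ of the whole construction (recall $E$ is defined over $\QQ$ and $\gp,\gq$ are the two primes above $p$). Hence it suffices to show that $B(1,0)$ is a unit.

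For $(1,0)$ the plan is to argue in the group ring $R:=\Zp[G_{\gp^2\gq}^\prime]$ directly. Since $p$ is prime to the class number of $K$ (hypothesis \textbf{(C.N)}) and to $\#(\OO_K/\gq)^\times=p-1$, the group $G_\gq^\prime$ is trivial, so $K^\prime(\gp^2\gq)=K^\prime(\gp^2)$ and $G_{\gp^2\gq}^\prime=G_{\gp^2}^\prime\cong\ZZ/p\ZZ$; thus $R\cong\Zp[\ZZ/p\ZZ]$ is a complete local ring with maximal ideal $(p,\gamma-1)$ ($\gamma$ a generator) and residue field $\mathbb{F}_p$, the residue map being the augmentation $\varepsilon$ followed by reduction mod $p$. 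Consequently $B(1,0)\in R^\times$ if and only if $\varepsilon(B(1,0))=\varepsilon(C(2,1))\in\Zp^\times$, and this number I would compute by applying $\varepsilon$ to the recursion of Lemma~\ref{lem:theta1} defining $C(2,1)$: every trace $\mathrm{Tr}_{\mathfrak m}^{\mathfrak f}$ that occurs contributes the factor $[K(\mathfrak f):K(\mathfrak m)]$, which is a $p$-adic unit for the "finite-level" traces ($\mathrm{Tr}_{(1)}^{\gp}$, $\mathrm{Tr}_{\gp}^{\gp\gq}$, $\mathrm{Tr}_{(1)}^{\gp\gq}$, whose degrees are read off from $\#(\OO_K/\gp)^\times$ and the number of roots of unity of $K$) and a multiple of $p$ for every genuine $\gp$-level trace $\mathrm{Tr}_{\gp^{j-1}}^{\gp^j}$ with $j\geq2$; the base term is $\varepsilon(C(0,0))=\big(\prod_{v\in S}(1-\alpha_v^{-1})^2\big)^{-1}=(1-\alpha^{-1})^{-4}$, a unit precisely because $a_p\not\equiv1\pmod p$ (here $\alpha_\gp=\alpha_\gq=\alpha\equiv a_p\bmod p$). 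Collecting these contributions, $\varepsilon(B(1,0))$ reduces modulo $p$ to an explicit element of $\mathbb{F}_p$, which one then checks is nonzero, once more by the hypothesis $a_p\not\equiv1\pmod p$. This gives $B(1,0)\in R^\times$, hence $(\Theta_{1,0})=(\Theta_{1,0}^S)$, and $(0,1)$ follows by symmetry.

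The hard part will be this last computation. One must track carefully: the several trace degrees $[K(\mathfrak f):K(\mathfrak m)]$ (which depend on $\#(\OO_K/\gp^j)^\times$ and on $\#\OO_K^\times$), the powers of $\alpha_\gp,\alpha_\gq$ accumulated along the recursion, and the fact that every inverse produced by Lemma~\ref{lem:groupring unit} has $p$-adic unit augmentation (so that reduction mod $p$ is legitimate); and then one must verify that, after all the cancellations, the surviving factor is indeed a nonzero element of $\mathbb{F}_p$ under the single standing hypothesis $a_p\not\equiv1\pmod p$.
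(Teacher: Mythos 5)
The overall mechanism you use is the paper's: reduce the statement to the assertion that $\Theta_{n,m}=B(n,m)\Theta_{n,m}^S$ with $B(n,m)=\pi_{n+1,m+1}(C(n+1,m+1))$ a unit, which is exactly Proposition \ref{prop:B(n,m)}; the paper's proof of Theorem \ref{thm:stabilized} is nothing more than a citation of that proposition, which is stated for all $n+m\geq 1$ and hence already covers the full range of the theorem. You instead only invoke the proposition for $n+m\geq 2$ (presumably because of the cautionary remark following it) and undertake to prove the pairs $(1,0)$ and $(0,1)$ yourself. The symmetry reduction to $(1,0)$ and the reduction, via the locality of $R=\Zp[G^\prime_{\gp^2\gq}]\cong\Zp[\ZZ/p\ZZ]$, to showing $\varepsilon(B(1,0))=\varepsilon(C(2,1))\in\Zp^\times$ are both sound. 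But at that point your argument stops: you state that after applying $\varepsilon$ to the recursion "one then checks" the result is nonzero mod $p$, and you yourself flag that "the hard part will be this last computation." A proof cannot end by deferring its decisive step; as written, the cases $(1,0),(0,1)$ are not proved, and this is a genuine gap.

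Moreover, the missing computation is not a formality that visibly follows from $a_p\not\equiv 1\bmod p$. In the recursion of Lemma \ref{lem:theta1} for $C(2,1)$, the trace $\mathrm{Tr}_{\gp^2}^{\gp^2\gq}$ has kernel a quotient of $(\OO_K/\gq)^\times$, of order prime to $p$, so its contribution to $\varepsilon(C(2,1))$ is a $p$-adic \emph{unit} multiple of $\varepsilon(C(2,0))$ rather than a multiple of $p$; consequently $\varepsilon(C(2,1))$ is not of the shape $\alpha_\gp^2\alpha_\gq+p(\cdots)$, and one must actually evaluate the sum of all contributions, including the augmentations of the inverses supplied by Lemma \ref{lem:groupring unit} (e.g. $\varepsilon\bigl((1-\alpha_\gq^{-1}\sigma_\gq-\alpha_\gq^{-1}\sigma_\gq^{-1})^{-1}\bigr)=(1-2\alpha_\gq^{-1})^{-1}$, whose unit-ness is a further congruence condition inherited from that lemma, not from $a_p\not\equiv1$), and then rule out cancellation mod $p$. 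None of this is carried out, and it is not obvious that the single hypothesis $a_p\not\equiv1\bmod p$ suffices. So either cite Proposition \ref{prop:B(n,m)} in its stated range $n+m\geq1$, as the paper does, or, if you doubt it at the boundary pairs, complete the augmentation computation you have only sketched; as submitted, your proposal does neither.
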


\begin{proof}
    Follows directly by proposition \ref{prop:B(n,m)}.
\end{proof}

\section{Comparing $p$-adic $L$-function}

Recall that the period appearing in the interpolation formula for $L_p^{\mathrm{Hi}}(f_\alpha/K)$ \cite[Theorem 2.1]{BL22} is the Petersson inner product $\langle f,f\rangle_{N_E}$.

\begin{proposition}\label{prop:L-functions}
    We have $(L_{p,\Delta}(f)) = (L_p^{\mathrm{Hi}}(f_\alpha/K))$ as ideals in $\Lambda \otimes \Qp$. Furthermore, if the ratio of the periods $\Omega_{\mathrm{Loe}}/\langle f,f\rangle_{N_E}$ is a $p$-adic unit, then the equality holds in $\Lambda$.
\end{proposition}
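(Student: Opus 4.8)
The plan is to compare $L_{p,\Delta}(f)$ and $L_p^{\mathrm{Hi}}(f_\alpha/K)$ through their interpolation properties at finite-order characters and to conclude using the fact that such characters separate elements of $\Lambda$.

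Fix the ordinary choice $\ord_p(\alpha_\gp)=\ord_p(\alpha_\gq)=0$ and the generator $\Omega_{\mathrm{Loe}}$ of $L_f$. Since $L_{p,\Delta}(f)=e_{\chi_0}\cdot L_p(f)$, for a finite-order character $\omega$ of $\Gamma$ — viewed as a character of $G_{p^\infty}$ trivial on $\Delta$, of conductor $\mathfrak f\mid p^\infty$ — Theorem \ref{thm:Loeffler interpolation} gives
\[
\omega\!\left(L_{p,\Delta}(f)\right)=\Omega_{\mathrm{Loe}}^{-1}\left(\prod_{v\in S}\alpha_v^{-\ord_v(\mathfrak f)}\right)\!\left(\prod_{v\in S,\,v\nmid\mathfrak f}(1-\alpha_v^{-1}\omega(v))(1-\alpha_v^{-1}\omega(v)^{-1})\right)\frac{L_{\mathfrak f}(\Pi,\omega,1)}{\tau(\omega)\,|\mathfrak f|^{1/2}\,(4\pi)^2}.
\]
The interpolation formula for Hida's $L$-function, recalled from \cite[Theorem 2.1]{BL22}, expresses $\omega(L_p^{\mathrm{Hi}}(f_\alpha/K))$ by the same recipe with $\Omega_{\mathrm{Loe}}$ replaced by the Petersson norm $\langle f,f\rangle_{N_E}$: the automorphic $L$-value is $L_{\mathfrak f}(\Pi,\omega,1)$ in both cases, the modifying Euler factors at $\gp$ and $\gq$ are governed by the same unit roots $\alpha_\gp,\alpha_\gq$ (this is exactly the effect of specialising the Hida family to $f_\alpha$), and the Gauss-sum and archimedean factors coincide after absorbing any residual normalisation discrepancy — such as the missing $|\mathfrak f|^{1/2}$ flagged in the footnote to Theorem \ref{thm:Loeffler interpolation} — into the otherwise unspecified choice of $\Omega_{\mathrm{Loe}}$. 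Thus $\omega(L_{p,\Delta}(f))=c\cdot\omega(L_p^{\mathrm{Hi}}(f_\alpha/K))$ for every finite-order $\omega$ of $\Gamma$, where $c$ is, up to the fixed embedding $\iota_p$, the ratio $\Omega_{\mathrm{Loe}}/\langle f,f\rangle_{N_E}$; this is a nonzero algebraic number, being a quotient of two periods normalising the same $L$-values.

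Now $L_p^{\mathrm{Hi}}(f_\alpha/K)\neq 0$ (by Theorem \ref{thm:IMC} it generates the characteristic ideal of a torsion module), so there is a finite-order $\omega_0$ with $\omega_0(L_p^{\mathrm{Hi}}(f_\alpha/K))\neq 0$, and $c$ is pinned down as $\omega_0(L_{p,\Delta}(f))/\omega_0(L_p^{\mathrm{Hi}}(f_\alpha/K))$. Since $\Lambda\cong\Zp\llbracket X,Y\rrbracket$ is a domain and finite-order characters of $\Gamma$ separate its elements after any finite extension of scalars, we obtain $L_{p,\Delta}(f)=c\cdot L_p^{\mathrm{Hi}}(f_\alpha/K)$; comparing any nonzero $\Zp$-coefficient of $L_p^{\mathrm{Hi}}(f_\alpha/K)$ with the corresponding coefficient of $L_{p,\Delta}(f)$ — both elements lying in $\Lambda$ — forces $c\in\Qp^\times$. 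As $c$ is then a unit of $\Lambda\otimes\Qp$, the two elements generate the same ideal, which is the first assertion. If moreover $\Omega_{\mathrm{Loe}}/\langle f,f\rangle_{N_E}$ is a $p$-adic unit, then $c\in\Zp^\times\subseteq\Lambda^\times$, so the identity $L_{p,\Delta}(f)=c\cdot L_p^{\mathrm{Hi}}(f_\alpha/K)$, and hence the equality of ideals, holds already in $\Lambda$.

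The main obstacle is the middle step: one must verify that the interpolation formulae of \cite{Han87}/\cite{Loe14} and of \cite{BL22} really do match factor by factor in their dependence on $\omega$ — the twisted automorphic $L$-value, the Gauss sum $\tau(\omega)$, the factor $|\mathfrak f|^{1/2}$, and above all the modifying Euler factors at $\gp$ and $\gq$ — so that their quotient is a constant independent of $\omega$ rather than a character-dependent algebraic factor. Reconciling the normalisation conventions across these references (e.g.\ $\tau(\omega)$ versus $\tau(\overline{\omega})$, powers of $2\pi i$, and the precise base change to $K$ of the automorphic $L$-function of $f$) is where the real bookkeeping lies; any leftover mismatch that is a $p$-adic unit is harmless for the statement and is absorbed into $\Omega_{\mathrm{Loe}}$.
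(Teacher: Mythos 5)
Your proposal is correct and follows essentially the same route as the paper: both arguments compare the interpolation formula of Theorem \ref{thm:Loeffler interpolation} with that of $L_p^{\mathrm{Hi}}(f_\alpha/K)$ from \cite[Theorem 2.1]{BL22} at finite-order characters, conclude that the quotient $L_{p,\Delta}(f)/L_p^{\mathrm{Hi}}(f_\alpha/K)$ is a constant in $\Qp$ given (up to a unit) by $\Omega_{\mathrm{Loe}}/\langle f,f\rangle_{N_E}$, and deduce the equality of ideals in $\Lambda\otimes\Qp$, respectively in $\Lambda$ when that ratio is a $p$-adic unit. The only content the paper adds to the factor-by-factor matching you flag as the remaining bookkeeping is the remark that Loeffler's $\tau(\omega)\cdot|\mathfrak f|^{1/2}$ corresponds to the root number $\tau(\psi_p)$ of \emph{loc.\ cit.}, of absolute value $p^{(n+m)/2}$, so the character-dependent factors genuinely cancel; note only that such a conductor-dependent factor could not literally be absorbed into the fixed period $\Omega_{\mathrm{Loe}}$, as you suggest in passing, though your closing paragraph already treats this as part of the verification rather than relying on it.
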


\begin{proof}
     Comparing the interpolation formula \ref{thm:Loeffler interpolation} for $L_{p,\Delta}(f)$ and the one for $L_p^{\mathrm{Hi}}(f_\alpha/K)$ \cite[Theorem 2.1]{BL22}, one finds that the quotient $L_{p,\Delta}(f)/L_p^{\mathrm{Hi}}(f_\alpha/K)$ is an element of $\Qp$ (remark that the absolute value of root number $\tau(\psi_p)$ of \textit{loc. cit.} is $p^{(n+m)/2}$ whereas the absolute value of the Gauss sum of $\psi$ is $1$). If $\Omega_{\mathrm{Loe}}/\langle f,f\rangle_{N_E} \in \Zp^\times$, then we get that $L_{p,\Delta}(f)/L_p^{\mathrm{Hi}}(f_\alpha/K)\in \Zp^\times$.
\end{proof}

\section{Selmer complexes}

\subsection{Fitting ideals}\label{sec:Fitting}

Let $R$ be a ring and $M$ be a finitely presented $R$-module. Let 
$$
R^s \xrightarrow{h} R^r \to M \to 0
$$
be a presentation of $M$. The zeroth Fitting ideal of $M$ over $R$, denoted $\mathrm{Fitt}_R(M)$, is the ideal of $R$ generated by the $r\times r$ minors of the matrix $h$. The Fitting ideal does not depend of the choice of presentation for $M$.

\begin{lemma}\label{lm:Fitt quotient}
Let $M$ be a finitely presented $R$-module. If $I \subset R$ is an ideal, then
$$
\mathrm{Fitt}_{R/I}(M/IM) = \pi \left( \mathrm{Fitt}_R(M) \right)
$$
where $\pi:R\to R/I$ is the natural quotient map.
\end{lemma}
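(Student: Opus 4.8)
The plan is to reduce everything to the behavior of a presentation matrix under base change along $\pi\colon R\to R/I$. First I would fix a finite presentation
$$
R^s \xrightarrow{h} R^r \to M \to 0
$$
of $M$ as in the definition of the Fitting ideal. Since $-\otimes_R R/I$ is right exact and $R^k\otimes_R R/I \cong (R/I)^k$ canonically, applying it to the presentation yields an exact sequence
$$
(R/I)^s \xrightarrow{\bar h} (R/I)^r \to M/IM \to 0,
$$
where $\bar h$ is the matrix obtained from $h$ by reducing every entry modulo $I$; here I use that $M\otimes_R R/I \cong M/IM$. Thus $\bar h$ is a presentation matrix for the $R/I$-module $M/IM$, and $\mathrm{Fitt}_{R/I}(M/IM)$ is by definition the ideal of $R/I$ generated by the $r\times r$ minors of $\bar h$.

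Next I would observe that the ring homomorphism $\pi$ commutes with the formation of determinants: for any $r\times r$ submatrix of $h$, the determinant of its reduction equals the reduction of its determinant, i.e. $\det(\overline{h_J}) = \pi(\det(h_J))$ for every choice $J$ of $r$ columns (and, if $s<r$, there are no such minors and both ideals are zero, so the claim is trivial in that degenerate case). Hence the generating set of $\mathrm{Fitt}_{R/I}(M/IM)$ is exactly the image under $\pi$ of the generating set of $\mathrm{Fitt}_R(M)$, which gives $\mathrm{Fitt}_{R/I}(M/IM) = \pi(\mathrm{Fitt}_R(M))$ as ideals of $R/I$. Finally I would note that the independence of the Fitting ideal from the chosen presentation (already recalled in the text) guarantees that this identity does not depend on the presentation picked at the start.

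There is essentially no serious obstacle here; the only point requiring a word of care is the compatibility $M\otimes_R R/I\cong M/IM$ and the fact that reduction mod $I$ of a presentation matrix again presents $M/IM$ — both of which are immediate from right exactness of the tensor product. I would keep the write-up short, emphasizing the base-change step and the elementary fact that determinants are polynomial in the entries and hence commute with $\pi$.
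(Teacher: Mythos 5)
Your proof is correct: reducing a finite presentation modulo $I$ via right exactness of $-\otimes_R R/I$ and noting that $r\times r$ minors commute with $\pi$ (including the degenerate case $s<r$, where both ideals vanish) is exactly the standard argument, and it is the one behind the reference \cite[Appendix]{MW84} that the paper cites in place of a proof. Nothing is missing.
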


\begin{proof}
    See \cite[4, Appendix]{MW84}.
\end{proof}

\begin{lemma}\label{lm:Fitting tensor}
    If $R \to R^\prime$ is a ring map, then $\mathrm{Fitt}_{R^\prime}(M\otimes_R R^\prime)$ is the ideal of $R^\prime$ generated by the image of $\mathrm{Fitt}_R(M)$.
\end{lemma}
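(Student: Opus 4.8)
The plan is to lift the statement from the level of presentations: base change a finite presentation of $M$ along $\varphi\colon R\to R'$ and track what happens to the defining minors.

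First I would fix a finite presentation
$$
R^s \xrightarrow{\,h\,} R^r \to M \to 0,
$$
which exists because $M$ is finitely presented. Since $-\otimes_R R'$ is right exact, applying it produces an exact sequence
$$
(R')^{s} \xrightarrow{\,h'\,} (R')^{r} \to M\otimes_R R' \to 0,
$$
where the matrix $h'$ is obtained from $h$ by applying $\varphi$ to each entry. In particular this is again a finite presentation, so $M\otimes_R R'$ is finitely presented and, by the independence of the zeroth Fitting ideal from the choice of presentation recalled in \S\ref{sec:Fitting}, $\mathrm{Fitt}_{R'}(M\otimes_R R')$ may be computed from $h'$.

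Next I would use that every $r\times r$ minor is a polynomial with integer coefficients in the matrix entries, so it is preserved by the ring homomorphism $\varphi$: the $r\times r$ minors of $h'$ are exactly the $\varphi$-images of the $r\times r$ minors of $h$. Since the latter generate $\mathrm{Fitt}_R(M)$, it follows that $\mathrm{Fitt}_{R'}(M\otimes_R R')$ is generated over $R'$ by $\varphi(\mathrm{Fitt}_R(M))$, i.e. it is the ideal of $R'$ generated by the image of $\mathrm{Fitt}_R(M)$; taking $R'=R/I$ recovers Lemma~\ref{lm:Fitt quotient}. There is no genuine obstacle here; the only points needing care are that $M$ be finitely presented — so that $M\otimes_R R'$ is again finitely presented and its Fitting ideal is intrinsic — and the elementary fact that determinants commute with ring homomorphisms.
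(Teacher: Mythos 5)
Your argument is correct and is exactly the content behind the paper's one-line proof: the paper simply cites right exactness of $\otimes_R R'$, which is precisely what lets you base change the presentation $R^s \xrightarrow{h} R^r \to M \to 0$ and then observe that minors commute with the ring map. You have merely written out the details the paper leaves implicit, so there is nothing to change.
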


\begin{proof}
    This follows from the fact that $\otimes_R R^\prime$ is right exact.
\end{proof}

\subsection{Definition}
\begin{definition}
Let $R$ be a Noetherian ring. A complex $M^{\bullet}$ of $R$-modules is perfect if there exists a quasi-isomorphism $P^\bullet \to M^\bullet$, where $P^\bullet$ is a bounded complex of projective $R$-modules of finite type. Denote by $D_\mathrm{parf}(R)$ the category of perfect $R$-modules. One says that $M^\bullet \in D_\mathrm{parf}(R)$ has perfect amplitude contained in $[a,b]$ (denoted $M^\bullet \in D_\mathrm{parf}^{[a,b]}(R)$) if the complex $P^\bullet$ above can be chosen in such a way that $P^i=0$ for every $i<a$ and $i>b$.
\end{definition}

\begin{definition}
Suppose that $M^\bullet \in D_\mathrm{parf}(R)$ is represented by a complex
$$
M^\bullet = \cdots \to M^0 \to M^1 \to M^2\to \cdots
$$
of finitely generated free $R$-modules such that $M^N=M^{-N}=0$ for all sufficiently large integers $N$. We define the Euler-Poincaré characteristic $\chi(M^\bullet)$ as 
$$
\chi(M^\bullet) \colonequals \sum_{n \in \ZZ}(-1)^n\mathrm{rank}_R(H^n(M)).
$$
\end{definition}
\subsection{Selmer complexes for elliptic curves}

Recall that $\Sigma$ is a finite set of places of $K$ containing the places at infinity and the ones dividing $N_Ep$. Let $\Sigma^\prime$ be the set of primes dividing $N_E$. For $v$ a prime in $K$, let $I_v \subseteq \Gal(\overline{K_v}/K_v)$ be the inertia group at $v$.
We denote by $\widetilde{\R\Gamma}_f(G_{K,\Sigma},A,\Delta_{\text{Gr}})$ the Selmer complex defined by using the Greenberg local conditions for $E$ \cite{Nek06} and we denote its cohomology by $\widetilde{H}_f^i(G_{K,\Sigma},A,\Delta_{\text{Gr}})$. We define $\widetilde{H}^i_{f,\mathrm{Iw}}(K_\infty/K,T,\Delta_{\text{Gr}})$ to be $\widetilde{H}_f^i(G_{K,\Sigma},T\otimes_{\Zp} \Lambda^\iota,\Delta_{\text{Gr}})$ where the local datum $\Delta_{\text{Gr}}$ is induced by the filtration $T^+ \otimes \Lambda^\iota \subset T \otimes \Lambda^\iota$. Then, Nekov\'{a}\v{r} duality theorem \cite[8.9.6.2]{Nek06} provides us with a canonical isomorphism
\begin{equation}\label{eqn:H2H1}
\widetilde{H}^2_{f,\mathrm{Iw}}(K_\infty/K,T,\Delta_{\text{Gr}})^\iota \cong \widetilde{H}_f^1(G_{K,\Sigma},A,\Delta_{\text{Gr}})^\vee.
\end{equation}

\begin{proposition}\label{Prop:Nekovar Fitting}
For every prime $v\in \Sigma^\prime$, suppose that $A^{G_v}=0$. Then, we have
$$
\mathrm{Fitt}_{\Lambda}(\widetilde{H}^2_{f,\mathrm{Iw}}(K_\infty/K,T,\Delta_{\text{Gr}})) =  \mathrm{char}_{\Lambda}(\widetilde{H}^2_{f,\mathrm{Iw}}(K_\infty/K,T,\Delta_{\text{Gr}})).
$$
\end{proposition}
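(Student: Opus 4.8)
The goal is to show that the Fitting ideal and the characteristic ideal of the $\Lambda$-module $N \colonequals \widetilde{H}^2_{f,\mathrm{Iw}}(K_\infty/K,T,\Delta_{\text{Gr}})$ coincide. Since $\Lambda \cong \Zp\llbracket X,Y\rrbracket$ is a regular local ring of Krull dimension $3$, the standard fact that $\mathrm{Fitt}_\Lambda(N) = \mathrm{char}_\Lambda(N)$ holds whenever $N$ is a \emph{pseudo-null-free torsion} module which moreover admits a presentation by a square matrix; more precisely, the cleanest route is to show that $N$ has projective dimension $1$ over $\Lambda$, i.e. there is an exact sequence $0 \to \Lambda^d \xrightarrow{h} \Lambda^d \to N \to 0$ with $h$ a square matrix, and that $\det h$ generates $\mathrm{char}_\Lambda(N)$. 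For a square presentation the Fitting ideal is the principal ideal $(\det h)$, and for a torsion module of projective dimension $\le 1$ over a regular ring the characteristic ideal is also $(\det h)$ by the structure theory, so the two ideals agree.

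The plan is therefore: First I would invoke the Selmer complex formalism. The complex $\widetilde{\R\Gamma}_{f,\mathrm{Iw}}(K_\infty/K,T,\Delta_{\text{Gr}})$ is a perfect complex of $\Lambda$-modules, and under the running hypotheses (in particular $A^{G_v}=0$ for $v\in\Sigma'$, which kills the local $H^0$ contributions at the bad places, together with the control-type vanishing that forces $\widetilde{H}^0_{f,\mathrm{Iw}}$ and $\widetilde{H}^1_{f,\mathrm{Iw}}$ to be controlled), one shows the complex has perfect amplitude in $[1,2]$ and that $\chi$ of the complex is zero. Second, from perfect amplitude in $[1,2]$ one gets a two-term presentation $\Lambda^a \xrightarrow{h} \Lambda^b$ whose cohomology in degrees $1,2$ is $\widetilde{H}^1_{f,\mathrm{Iw}}$ and $\widetilde{H}^2_{f,\mathrm{Iw}}$ respectively. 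The hypothesis $A^{G_v}=0$ at bad primes, combined with the triviality of $E(K)[p]$ baked into the earlier control theorem (Proposition~\ref{prop:Control}), forces $\widetilde{H}^1_{f,\mathrm{Iw}}$ to vanish (it is $\Lambda$-torsion by Theorem~\ref{thm:IMC} and torsion-free by a weak Leopoldt / no-pseudo-null-submodule argument, hence zero); this makes $h$ injective, and the vanishing of $\chi$ forces $a=b$, so $h$ is a square matrix. Third, with $h$ square and injective, $\mathrm{Fitt}_\Lambda(N) = (\det h)$, and since $N$ is $\Lambda$-torsion of projective dimension $1$, $\mathrm{char}_\Lambda(N) = (\det h)$ as well. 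Combining gives the claimed equality.

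I expect the main obstacle to be establishing that $\widetilde{H}^1_{f,\mathrm{Iw}}(K_\infty/K,T,\Delta_{\text{Gr}}) = 0$, i.e. that the complex really does have amplitude concentrated in degree $2$ after the torsion statement is invoked. The torsion-freeness of $\widetilde{H}^1_{f,\mathrm{Iw}}$ over the three-dimensional ring $\Lambda$ is the place where one must use that $T$ is irreducible as a $G_K$-representation (so that $H^0(K_\infty,A)$ is cofinitely generated and in fact the relevant global $H^0$ vanishes) together with Nekov\'a\v{r}'s description of $\widetilde{H}^1$; alternatively one argues that any pseudo-null submodule of $\widetilde{H}^1_{f,\mathrm{Iw}}$ must vanish by \cite[Prop.~5.6]{Gre03}-type control and the non-vanishing of the $p$-adic $L$-function. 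Once $\widetilde{H}^1_{f,\mathrm{Iw}}=0$ the rest is the formal comparison of Fitting and characteristic ideals for a module of projective dimension one over a regular local ring, which is routine (e.g. \cite[Appendix]{MW84} together with the structure theorem), and the Euler characteristic computation that pins down $a=b$ is a standard local-global count of the ranks of Galois cohomology groups, using that the local terms at $v\mid p$ contribute rank exactly matching the $[K:\Q]$-weighted global Euler characteristic.
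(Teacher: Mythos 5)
Your opening steps match the paper's argument: the hypothesis $A^{G_v}=0$ at the primes of $\Sigma^\prime$ kills the local Tamagawa factors, so by Nekov\'a\v{r} the complex $\widetilde{\R\Gamma}_{f,\mathrm{Iw}}(G_{K,\Sigma},T,\Delta_{\text{Gr}})$ lies in $D^{[1,2]}_{\mathrm{parf}}(\Lambda)$, and the Euler--Poincar\'e characteristic formula gives $\chi=0$, whence a four-term exact sequence $0 \to \widetilde{H}^1_{f,\mathrm{Iw}} \to P \to P \to \widetilde{H}^2_{f,\mathrm{Iw}} \to 0$ with $P$ free and the middle map square. Up to that point you are following the same route as the paper.

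The gap is in your treatment of $\widetilde{H}^1_{f,\mathrm{Iw}}$. You assert that it vanishes because it is ``$\Lambda$-torsion by Theorem~\ref{thm:IMC}'' and torsion-free by a weak Leopoldt / pseudo-nullity argument. But the proposition assumes only $A^{G_v}=0$ for $v\in\Sigma^\prime$; the torsionness of $\mathcal{X}_{\mathrm{Gr}}(K_\infty,A)$ (equivalently, of $\widetilde{H}^2_{f,\mathrm{Iw}}$ via \eqref{eqn:H2H1}) is \emph{not} among its hypotheses, and importing Theorem~\ref{thm:IMC} smuggles in the generalized Heegner hypothesis, squarefreeness of $N_E$, $N^-\neq 1$, ramification of $\overline{\rho}_f$ at $N^-$, and regularity of $\mathbb{I}$ --- none of which are assumed here. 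Your fallback arguments (irreducibility of $T$, weak Leopoldt, non-vanishing of the $p$-adic $L$-function) only address torsion-\emph{freeness} of $\widetilde{H}^1_{f,\mathrm{Iw}}$ (which is anyway automatic since it embeds into the free module $P$); they do not yield the torsionness needed to conclude $\widetilde{H}^1_{f,\mathrm{Iw}}=0$. The paper avoids this entirely by a case distinction, following the proof of \cite[Corollary 2.10]{BS21}: if $\widetilde{H}^2_{f,\mathrm{Iw}}$ is $\Lambda$-torsion, the rank count in the four-term sequence forces $\widetilde{H}^1_{f,\mathrm{Iw}}=0$ and then $\mathrm{Fitt}_\Lambda=\mathrm{char}_\Lambda=(\det)$ exactly as you describe; if it is not torsion, both the Fitting ideal and the characteristic ideal are zero, so the asserted equality holds trivially. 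To repair your proof you should replace the claimed unconditional vanishing of $\widetilde{H}^1_{f,\mathrm{Iw}}$ by this dichotomy (or else add the torsionness of $\widetilde{H}^2_{f,\mathrm{Iw}}$ as an explicit hypothesis, which would weaken the proposition relative to its intended use).
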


\begin{proof}
By the definition of the Selmer complex $\widetilde{\R\Gamma}_f(G_{K,\Sigma},A,\Delta_{\text{Gr}})$, we have an isomorphism
$$
\widetilde{H}^0_f(G_{K,\Sigma},A,\Delta_{\mathrm{Gr}}) \xrightarrow{\sim} \ker \left( H^0(G_{K,\Sigma},A) \to \prod_{v\in\Sigma} H^0(G_{K_v},A) \right).
$$
This shows that $\widetilde{H}^0_f(G_{K,\Sigma},A,\Delta_{\mathrm{Gre}})=0$. Let $v$ be a prime of $K$ and let $\mathrm{Tam}_v(T,(\varpi))$ be the local Tamagawa factor as defined in \cite[Definition 7.6.10.1]{Nek06}. If $A^{G_v}=0$, then we have by \cite[7.6.10.12]{Nek06} that $\mathrm{Tam}_v(T,(\varpi))=0$. Thus, it follows by \cite[Proposition 9.7.7 (iii)]{Nek06} that 
\begin{equation}\label{eqn:perfect}
\widetilde{\R\Gamma}_{f,\mathrm{Iw}}(G_{K,\Sigma},T,\Delta_{\text{Gr}}) \in D_\mathrm{parf}^{[1,2]}(\Lambda).
\end{equation}
Furthermore, \cite[Theorem 8.9.15]{Nek06} gives us that the Euler-Poincaré characteristic of $\widetilde{\R\Gamma}_{f,\mathrm{Iw}}(G_{K,\Sigma},T,\Delta_{\text{Gr}})$ is 
$$
\mathrm{rank}_\Lambda (T \otimes \Lambda^\iota) - \sum_{v \in \{\gp,\gq\}}[K_v:\Qp]\mathrm{rank}_\Lambda (T_v^+\otimes \Lambda^\iota).
$$
By the definition of $T_v^+$ and the fact that $p$ splits in $K$, we get
\begin{equation}\label{eqn:EPC}
\chi(\widetilde{\R\Gamma}_{f,\mathrm{Iw}}(G_{K,\Sigma},T,\Delta_{\text{Gr}})) = 0.
\end{equation}
By combining \eqref{eqn:perfect} and \eqref{eqn:EPC}, there exists an exact sequence of $\Lambda$-modules
$$
0 \to \widetilde{H}^1_{f,\mathrm{Iw}}(K_\infty/K,T,\Delta_{\text{Gr}}) \to P \to P \to \widetilde{H}^2_{f,\mathrm{Iw}}(K_\infty/K,T,\Delta_{\text{Gr}}) \to 0,
$$
where $P$ is a free $\Lambda$-module of finite rank. The proof of \cite[Corollary 2.10]{BS21} shows that $\widetilde{H}^2_{f,\mathrm{Iw}}(K_\infty/K,T,\Delta_{\text{Gr}})$ is $\Lambda$-torsion if and only if $ \widetilde{H}^1_{f,\mathrm{Iw}}(K_\infty/K,T,\Delta_{\text{Gr}})=0$, in which case 
$$
\mathrm{Fitt}_{\Lambda}(\widetilde{H}^2_{f,\mathrm{Iw}}(K_\infty/K,T,\Delta_{\text{Gr}})) =  \mathrm{char}_{\Lambda}(\widetilde{H}^2_{f,\mathrm{Iw}}(K_\infty/K,T,\Delta_{\text{Gr}})).
$$
If $\widetilde{H}^2_{f,\mathrm{Iw}}(K_\infty/K,T,\Delta_{\text{Gr}})$ is not torsion, 
$$
\mathrm{Fitt}_{\Lambda}(\widetilde{H}^2_{f,\mathrm{Iw}}(K_\infty/K,T,\Delta_{\text{Gr}})) =0=  \mathrm{char}_{\Lambda}(\widetilde{H}^2_{f,\mathrm{Iw}}(K_\infty/K,T,\Delta_{\text{Gr}})).
$$
\end{proof}

\subsection{Relation to Greenberg's Selmer groups}

\begin{proposition}\label{prop:FittChar}
Suppose that we have $A^{G_v}=0$ for every prime $v \in \Sigma^\prime$. Then, we have 
$$
\mathrm{Fitt}_{\Lambda}(\mathcal{X}_{\text{Gr}}(K_\infty,A)) = \mathrm{char}_{\Lambda}(\mathcal{X}_{\text{Gr}}(K_\infty,A)).
$$
\end{proposition}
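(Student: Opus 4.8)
The plan is to deduce the statement from Proposition~\ref{Prop:Nekovar Fitting} by transporting the equality $\mathrm{Fitt}_\Lambda=\mathrm{char}_\Lambda$ for $\widetilde{H}^2_{f,\mathrm{Iw}}(K_\infty/K,T,\Delta_{\text{Gr}})$ along the Nekov\'a\v{r} duality isomorphism~\eqref{eqn:H2H1}. The only real work is to match the Selmer complex cohomology group $\widetilde{H}^1_f(G_{K,\Sigma},A,\Delta_{\text{Gr}})$ appearing there with the dual Greenberg Selmer group $\mathcal{X}_{\mathrm{Gr}}(K_\infty,A)$; after that the argument is purely formal.

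First I would check the identification $\widetilde{H}^1_f(G_{K,\Sigma},A,\Delta_{\text{Gr}})\cong \Sel_{\mathrm{Gr}}(K_\infty,A)$. From the defining exact triangle of the Selmer complex $\widetilde{\R\Gamma}_f(G_{K,\Sigma},A,\Delta_{\text{Gr}})$ one obtains a surjection of $\widetilde{H}^1_f(G_{K,\Sigma},A,\Delta_{\text{Gr}})$ onto the kernel of
$$
H^1(G_{K,\Sigma},A)\to \prod_{w\nmid p}H^1(I_w,A)\times\prod_{w\mid p}H^1(K_{\infty,w},A^-),
$$
which by definition is $\Sel_{\mathrm{Gr}}(K_\infty,A)$; here the local condition of $\Delta_{\text{Gr}}$ at $w\nmid p$ is the unramified one, and since $G_{K_w}/I_w$ has cohomological dimension one the quotient $H^1(K_w,A)/H^1_{\mathrm{ur}}(K_w,A)$ embeds into $H^1(I_w,A)$, so the two collections of local conditions agree (alternatively one can compare with $\Sel_{\mathrm{class}}$ via \eqref{eqn:Kummer=strict} layer by layer and pass to the limit). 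The kernel of this surjection is a subquotient of the degree-$0$ parts of the local cone complexes together with $\widetilde{H}^0_f(G_{K,\Sigma},A,\Delta_{\text{Gr}})$; the former vanish, either automatically (at $w\nmid p$) or because of the hypothesis $A^{G_v}=0$ for $v\in\Sigma^\prime$, and the latter was already seen to vanish in the proof of Proposition~\ref{Prop:Nekovar Fitting}. Hence the surjection is an isomorphism, and dualizing gives $\widetilde{H}^1_f(G_{K,\Sigma},A,\Delta_{\text{Gr}})^\vee\cong\mathcal{X}_{\mathrm{Gr}}(K_\infty,A)$.

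Combining this with~\eqref{eqn:H2H1} produces a $\Lambda$-module isomorphism $\mathcal{X}_{\mathrm{Gr}}(K_\infty,A)\cong \widetilde{H}^2_{f,\mathrm{Iw}}(K_\infty/K,T,\Delta_{\text{Gr}})^\iota$. Since the involution $\iota$ is a ring automorphism of $\Lambda$, a presentation matrix of $M^\iota$ is obtained from one of $M$ by applying $\iota$ entrywise, so $\mathrm{Fitt}_\Lambda(M^\iota)=\iota(\mathrm{Fitt}_\Lambda(M))$, and likewise $\iota$ permutes the height-one primes and preserves localizations' lengths, so $\mathrm{char}_\Lambda(M^\iota)=\iota(\mathrm{char}_\Lambda(M))$. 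Applying $\iota$ to the equality of Proposition~\ref{Prop:Nekovar Fitting} and using the isomorphism above then gives $\mathrm{Fitt}_\Lambda(\mathcal{X}_{\mathrm{Gr}}(K_\infty,A))=\mathrm{char}_\Lambda(\mathcal{X}_{\mathrm{Gr}}(K_\infty,A))$. I expect the main obstacle to be the middle step: carefully verifying that the local conditions of the Selmer complex at $p$ and at the bad primes reproduce exactly those in the definition of $\Sel_{\mathrm{Gr}}(K_\infty,A)$, and that the degree-$0$ corrections vanish under the stated hypothesis; the passage through $\iota$ and the invocation of Proposition~\ref{Prop:Nekovar Fitting} are then routine.
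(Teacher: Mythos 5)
Your overall route is the same as the paper's: transport the equality of Proposition~\ref{Prop:Nekovar Fitting} through Nekov\'a\v{r} duality \eqref{eqn:H2H1}, using that $\iota$ is a ring automorphism so that $\mathrm{Fitt}_\Lambda(M^\iota)=\iota(\mathrm{Fitt}_\Lambda(M))$ (the paper gets this from Lemma~\ref{lm:Fitting tensor} applied to $\iota:\Lambda\to\Lambda$, which is equivalent to your presentation-matrix argument) and $\mathrm{char}_\Lambda(M^\iota)=\iota(\mathrm{char}_\Lambda(M))$. The one place where you diverge is the middle step: the paper does not prove the identification $\widetilde{H}^1_f(G_{K,\Sigma},A,\Delta_{\mathrm{Gr}})^\vee\cong\mathcal{X}_{\mathrm{Gr}}(K_\infty,A)$ directly but quotes it, as a slight generalization of \cite[Prop.~9.6.6(iii)]{Nek06} following \cite[\S 3.5]{BL18}, whereas you sketch it from the defining exact triangle of the Selmer complex.

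That sketch is where your argument has a genuine gap. The kernel of the map from $\widetilde{H}^1_f$ onto the Greenberg Selmer group is controlled by the degree-zero cohomology of the local "error" complexes at \emph{all} places of $\Sigma$, and your dichotomy ``vanish automatically at $w\nmid p$, or by the hypothesis $A^{G_v}=0$ for $v\in\Sigma^\prime$'' never addresses the places $w\mid p$. There the relevant term is $H^0$ of the local quotient defining the Greenberg condition over $K_{\infty,w}$ (essentially $H^0(K_{\infty,w},A^-)$ in the paper's notation), and its vanishing is not automatic and is not implied by the hypothesis of Proposition~\ref{prop:FittChar}, which only concerns the primes of bad reduction. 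One can make it vanish, but only by a separate argument: for the sub-part of the filtration, inertia acts through the cyclotomic character, whose image on $G_{K_{\infty,w}}$ still contains the prime-to-$p$ torsion $\mu_{p-1}\subset\Zp^\times$ (since $K_{\infty,w}/\Qp$ is pro-$p$), so the fixed points are killed by a unit; for the unramified quotient one needs something like $a_p\not\equiv 1\bmod p$ to control Frobenius fixed points along the unramified $\Zp$-direction of $K_{\infty,w}$. Neither input appears in your sketch (nor in the proposition's hypotheses), and this is precisely the content that the cited \cite[Prop.~9.6.6(iii)]{Nek06}/\cite[\S 3.5]{BL18} supplies; as written, your claim that the degree-zero local contributions vanish is unjustified at $p$. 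Once that comparison is either proved along these lines or simply cited as in the paper, the remainder of your argument is correct and coincides with the paper's proof.
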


\begin{proof}
As remarked in \cite[Section 3.5]{BL18}, a slight generalization of \cite[Proposition 9.6.6(iii)]{Nek06} gives a canonical isomorphism
$$
\widetilde{H}_f^1(G_{K,S},A,\Delta_{\text{Gr}})^\vee \cong \mathcal{X}_{\text{Gr}}(E/K_\infty).
$$
If $M$ is any finitely generated torsion $\Lambda$-module, we have $\mathrm{Fitt}_{\Lambda}(M^\iota) = \iota\mathrm{Fitt}_{\Lambda}(M)$. Indeed, apply lemma \ref{lm:Fitting tensor} to the ring map $\iota:\Lambda\to \Lambda$. It also follows from the definition of the characteristic ideal that $\mathrm{char}_{\Lambda}(M^\iota) = \iota \mathrm{char}_{\Lambda}(M)$ (see for example the discussion of \cite[\S 15.5]{Wa97}. Thus, the result follows by proposition \ref{Prop:Nekovar Fitting} and the isomorphism \eqref{eqn:H2H1}.
\end{proof}

\section{Refined Mazur--Tate conjecture}

We now prove the main result of this note.

Let $\mathfrak{f} = \gp^{n+1} \gq^{m+1}$ be a modulus. Since taking $\Gal (K_\infty/K^\prime(\mathfrak{f}))$-invariants is the same as considering the part of $\Sel_\mathrm{Gr}(K_\infty,A)$  killed by $(\omega_n(\gamma_{\gp}-1),\omega_m(\gamma_{\gq}-1))$, the control theorem \eqref{prop:Control} can be reformulated as 
\begin{equation}\label{eqn:control2}
\Sel_\mathrm{Gr}(K_\infty,A)[\omega_n(\gamma_{\gp}-1),\omega_m(\gamma_{\gq}-1)] \cong \Sel_\mathrm{Gr}(K^\prime(\mathfrak{f}),A)
\end{equation}
provided that $E(K)[p]$ is trivial, that $a_p \not\equiv 1 \bmod p$ and that $E(K_v)[p]$ is trivial for all $v \in \Sigma^\prime$. Put $\Lambda_{n,m} \colonequals \Zp[\Gal( K^\prime(\gp^{n+1} \gq^{m+1})/K)]$.

\begin{theorem}\label{thm:main}
    Assume that the Iwasawa main conjecture \ref{conj:IMC} holds. Assume that the ratio of periods $\Omega_{\mathrm{Loe}}/\langle f,f\rangle_{N_E}$ is a $p$-adic unit. Suppose that $E(K)[p]$ is trivial, that $E(K_v)[p]$ is trivial for all $v \in \Sigma^\prime$ and that $a_p \not\equiv 1 \bmod p$. then,
    $$
    \mathrm{Fitt}_{\Lambda_{n,m}}(\mathcal{X}_{\mathrm{Gr}}(K^\prime(\gp^{n+1}\gq^{m+1}),A)) = (\Theta_{n,m})
    $$
    as ideals in $\Lambda_{n,m}$ for $n+m \geq 1$.
\end{theorem}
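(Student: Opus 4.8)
The plan is to combine three ingredients: the two-variable Iwasawa main conjecture (Conjecture \ref{conj:IMC}), the equality between Fitting and characteristic ideals of the dual Selmer group over $K_\infty$ (Proposition \ref{prop:FittChar}), and the control theorem in the form \eqref{eqn:control2}, together with the comparison of $p$-adic $L$-functions (Proposition \ref{prop:L-functions}) and the interpolation property of $L_p(f)$ in terms of the Mazur--Tate elements $\widetilde\Theta^S_{\mathfrak f}$. The key point is that the Mazur--Tate element should be recovered from the $p$-adic $L$-function by projecting $\Lambda \twoheadrightarrow \Lambda_{n,m}$, after an appropriate twist, while the dual Selmer group over $K'(\mathfrak f)$ should be recovered from $\mathcal X_{\mathrm{Gr}}(K_\infty,A)$ by the same projection via the control theorem; then Lemma \ref{lm:Fitt quotient} lets one pass Fitting ideals through the quotient map.

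Concretely, the steps I would carry out are as follows. First, by Proposition \ref{prop:FittChar} (whose hypothesis $A^{G_v}=0$ for $v\in\Sigma'$ is implied by $E(K_v)[p]=0$) and the assumed main conjecture \ref{conj:IMC} together with \eqref{eqn:Kummer=strict} identifying $\Sel_{\mathrm{Gr}}=\Sel_{\mathrm{class}}$, we get $\mathrm{Fitt}_\Lambda(\mathcal X_{\mathrm{Gr}}(K_\infty,A)) = \mathrm{char}_\Lambda(\mathcal X_{\mathrm{Gr}}(K_\infty,A)) = (L_p^{\mathrm{Hi}}(E/K))$, and by Proposition \ref{prop:L-functions} (using the hypothesis that $\Omega_{\mathrm{Loe}}/\langle f,f\rangle_{N_E}\in\Zp^\times$) this equals $(L_{p,\Delta}(f))$ as ideals in $\Lambda$. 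Second, I would apply Lemma \ref{lm:Fitt quotient} to the ideal $I=(\omega_n(\gamma_\gp-1),\omega_m(\gamma_\gq-1))\subset\Lambda$: the image of $\mathrm{Fitt}_\Lambda(\mathcal X_{\mathrm{Gr}}(K_\infty,A))$ in $\Lambda/I \cong \Lambda_{n,m}$ equals $\mathrm{Fitt}_{\Lambda_{n,m}}(\mathcal X_{\mathrm{Gr}}(K_\infty,A)/I\,\mathcal X_{\mathrm{Gr}}(K_\infty,A))$, and by dualizing the control isomorphism \eqref{eqn:control2} (the Pontryagin dual of $M[I]$ is $M^\vee/IM^\vee$ for cofinitely generated $M$) this is $\mathrm{Fitt}_{\Lambda_{n,m}}(\mathcal X_{\mathrm{Gr}}(K'(\mathfrak f),A))$. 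Third, on the analytic side I would identify the image of $L_{p,\Delta}(f)$ under $\Lambda\twoheadrightarrow\Lambda_{n,m}$ with $\Theta_{n,m}^S$, using the distribution/interpolation characterization of $L_p(f)$ as interpolating the $\widetilde\Theta^S_{\mathfrak f}$ and the definition of $\Theta^S_{n,m}$ as the projection of $\widetilde\Theta^S_{\gp^{n+1}\gq^{m+1}}$ to $\Zp[G'_{\gp^{n+1}\gq^{m+1}}]$; this should match $\Lambda_{n,m}=\Zp[G'_{\gp^{n+1}\gq^{m+1}}]$ after accounting for the $e_{\chi_0}$-component and the identification of $\Lambda/I$ with the relevant finite-level group ring. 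Finally, Theorem \ref{thm:stabilized} replaces $(\Theta_{n,m}^S)$ by $(\Theta_{n,m})$ since $n+m\geq1$, yielding the claimed equality.

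The main obstacle I expect is the careful bookkeeping in the third step: matching the normalizations. One has to reconcile the cyclotomic-type variable in $L_p(f)\in\Zp\llbracket G_{p^\infty}\rrbracket$ with the Iwasawa-algebra variables $\gamma_\gp-1,\gamma_\gq-1$, keep track of the involution $\iota$ (the Selmer side naturally produces $\Lambda^\iota$-modules via Nekov\'{a}\v{r} duality, while the $L$-function lives in $\Lambda$ without any twist), and verify that the ideal $I$ used in the control theorem corresponds exactly to the kernel of $\Lambda\to\Lambda_{n,m}$ after these identifications. In particular one should check that $\Lambda/(\omega_n(\gamma_\gp-1),\omega_m(\gamma_\gq-1))$ is canonically $\Zp[\Gal(K_\infty/K^\prime(\mathfrak f))\backslash\Gamma]=\Zp[\Gal(K'(\gp^{n+1}\gq^{m+1})/K)]=\Lambda_{n,m}$, and that projecting $L_{p,\Delta}(f)$ through this quotient indeed gives $\Theta_{n,m}^S$ rather than some unit multiple; granting these compatibilities, the three ideals $\mathrm{Fitt}_{\Lambda_{n,m}}(\mathcal X_{\mathrm{Gr}}(K'(\mathfrak f),A))$, $(L_{p,\Delta}(f)\bmod I)$, and $(\Theta_{n,m}^S)=(\Theta_{n,m})$ coincide, which is the assertion. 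Since all the equalities of $p$-adic $L$-functions in Proposition \ref{prop:L-functions} are stated up to $\Zp^\times$ (or in $\Lambda\otimes\Qp$ without the period hypothesis), I would state the final result as an equality of ideals in $\Lambda_{n,m}$ under the period hypothesis, matching the theorem statement.
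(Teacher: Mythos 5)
Your proposal follows essentially the same route as the paper's proof: establish $\mathrm{Fitt}_\Lambda(\mathcal X_{\mathrm{Gr}}(K_\infty,A)) = (L_{p,\Delta}(f))$ via Conjecture \ref{conj:IMC}, Proposition \ref{prop:FittChar} and Proposition \ref{prop:L-functions}; project to $\Lambda_{n,m}$ using Lemma \ref{lm:Fitt quotient} together with the dualized control isomorphism \eqref{eqn:control2}, identifying the image of $L_{p,\Delta}(f)$ with $\Theta_{n,m}^S$; and conclude with Theorem \ref{thm:stabilized} to replace $(\Theta_{n,m}^S)$ by $(\Theta_{n,m})$. The normalization and bookkeeping points you flag (the identification $\Lambda/(\omega_n(\gamma_\gp-1),\omega_m(\gamma_\gq-1))\cong\Lambda_{n,m}$ and the projection of $L_{p,\Delta}(f)$ to $\Theta_{n,m}^S$) are treated just as implicitly in the paper, so your argument is at least as complete.
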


\begin{proof}
    By the Iwasawa main conjecture \ref{conj:IMC}, proposition \ref{prop:L-functions} and proposition \ref{prop:FittChar}, we have an equality of ideals
    \begin{equation}\label{eqn:1}
    \mathrm{Fitt}_\Lambda(\mathcal{X}_{\mathrm{Gr}}(K_\infty,A)) = \mathrm{Char}_\Lambda(\mathcal{X}_{\mathrm{Gr}}(K_\infty,A)) = (L_p^{\mathrm{Hi}}(f_\alpha/K)) = (L_{p,\Delta}(f))
    \end{equation}
    inside $\Lambda$. Take the projection to $\Lambda_{n,m}$ on both side of \eqref{eqn:1} and use lemma \ref{lm:Fitt quotient} to get
    $$
    (\Theta_{n,m}^S) = \mathrm{Fitt}_{\Lambda_{n,m}}(\mathcal{X}_{\mathrm{Gr}}(K_\infty,A)/(\omega_n(\gamma_\gp-1),\omega_m(\gamma_\gq-1))\mathcal{X}_{\mathrm{Gr}}(K_\infty,A)).
    $$
    Finally, since Mazur--Tate elements and $p$-stabilized Mazur--Tate elements generate the same ideal by theorem \ref{thm:stabilized}, we get
    \begin{align*}
    (\Theta_{n,m}) &= \mathrm{Fitt}_{\Lambda_{n,m}}(\Sel_\mathrm{Gr}(K_\infty,A)[\omega_n(\gamma_{\gp}-1),\omega_m(\gamma_{\gq}-1)]^\vee ) \\
    &= \mathrm{Fitt}_{\Lambda_{n,m}}(\Sel_\mathrm{Gr}(K^\prime(\gp^{n+1}\gq^{m+1}),A)^\vee) \\
    &= \mathrm{Fitt}_{\Lambda_{n,m}}(\mathcal{X}_{\mathrm{Gr}}(K^\prime(\gp^{n+1}\gq^{m+1}),A))
    \end{align*}
    where the second equality follows by Mazur's control theorem \eqref{eqn:control2}.
\end{proof}

By using the proof of the Iwasawa main conjecture \ref{thm:IMC} given by Castella and Wan in the indefinite setting, we obtain a formulation of the previous theorem that is less conjectural.

\begin{corollary}
    Assume that
    \begin{itemize}
        \item $N^-$ is the product of an even number of prime in $K$;
        \item $N_E$ is squarefree;
        \item $N^- \neq 1$;
        \item $\overline{\rho}_f$ is ramified at every prime dividing $N^-$;
        \item $\mathbb{I}$ is regular;
        \item $E(K)[p]$ is trivial;
        \item $E(K_v)[p]$ is trivial for all $v \in \Sigma^\prime$;
        \item $a_p \not\equiv 1 \bmod p$.
    \end{itemize}
    Then,
    $$
    \mathrm{Fitt}_{\Lambda_{n,m}\otimes \Qp}(\mathcal{X}_{\mathrm{Gr}}(K^\prime(\gp^{n+1}\gq^{m+1}),A)) = (\Theta_{n,m})
    $$
    as ideals in $\Lambda_{n,m}\otimes \Qp$ for $n+m \geq 1$.
\end{corollary}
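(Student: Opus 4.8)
The plan is to reproduce the proof of Theorem \ref{thm:main} with the single change that the conjectural main conjecture \ref{conj:IMC} is replaced by the unconditional Theorem \ref{thm:IMC} of Castella--Wan. Since Theorem \ref{thm:IMC} only identifies the relevant characteristic ideals after inverting $p$, the hypothesis of Theorem \ref{thm:main} on the period ratio $\Omega_{\mathrm{Loe}}/\langle f,f\rangle_{N_E}$ becomes superfluous, at the cost that the final identity is obtained only in $\Lambda_{n,m} \otimes \Qp$ rather than in $\Lambda_{n,m}$.

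First I would verify that the hypotheses listed in the corollary are precisely those required to invoke Theorem \ref{thm:IMC} together with the intermediate results of the previous sections. The first bullet, combined with the squarefreeness of $N_E$, amounts to the generalized Heegner hypothesis (as $N^- \mid N_E$ is then automatically squarefree), while the remaining conditions ($N^- \neq 1$, ramification of $\overline{\rho}_f$ at the primes dividing $N^-$, regularity of $\mathbb{I}$) are the other assumptions of that theorem; the hypotheses that $p>3$, that $p$ does not divide the class number of $K$, and that $\disc(K)$ is coprime to $N_E p$ are in force throughout the note. Moreover, the triviality of $E(K_v)[p]$ for $v \in \Sigma^\prime$ forces $A^{G_v} = E(K_v)[p^\infty] = 0$, which is exactly the hypothesis of Propositions \ref{Prop:Nekovar Fitting} and \ref{prop:FittChar}; together with the triviality of $E(K)[p]$ and $a_p \not\equiv 1 \bmod p$ it also supplies the hypotheses of the control theorem \eqref{eqn:control2}.

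Then I would assemble the following chain of ideal equalities. Proposition \ref{prop:FittChar} gives $\mathrm{Fitt}_\Lambda(\mathcal{X}_{\mathrm{Gr}}(K_\infty,A)) = \mathrm{char}_\Lambda(\mathcal{X}_{\mathrm{Gr}}(K_\infty,A))$ in $\Lambda$; extending scalars by Lemma \ref{lm:Fitting tensor} and invoking Theorem \ref{thm:IMC} together with Proposition \ref{prop:L-functions} turns this into $\mathrm{Fitt}_{\Lambda \otimes \Qp}(\mathcal{X}_{\mathrm{Gr}}(K_\infty,A) \otimes \Qp) = (L_p^{\mathrm{Hi}}(f_\alpha/K)) = (L_{p,\Delta}(f))$ as ideals of $\Lambda \otimes \Qp$. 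I would then project along $\Lambda \otimes \Qp \twoheadrightarrow \Lambda_{n,m} \otimes \Qp = (\Lambda \otimes \Qp)/(\omega_n(\gamma_\gp-1),\omega_m(\gamma_\gq-1))$: by Lemma \ref{lm:Fitt quotient} the left side becomes the Fitting ideal over $\Lambda_{n,m}\otimes\Qp$ of $\mathcal{X}_{\mathrm{Gr}}(K_\infty,A)\otimes\Qp$ modulo $(\omega_n(\gamma_\gp-1),\omega_m(\gamma_\gq-1))$, while the image of $L_{p,\Delta}(f)$ is the $p$-stabilized Mazur--Tate element $\Theta_{n,m}^S$ by the interpolation property built into its construction. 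Finally, the control theorem \eqref{eqn:control2} identifies $\mathcal{X}_{\mathrm{Gr}}(K_\infty,A)/(\omega_n(\gamma_\gp-1),\omega_m(\gamma_\gq-1))$ with $\mathcal{X}_{\mathrm{Gr}}(K^\prime(\gp^{n+1}\gq^{m+1}),A)$, and Theorem \ref{thm:stabilized} gives $(\Theta_{n,m}^S) = (\Theta_{n,m})$ in $\Zp[G^\prime_{\gp^{n+1}\gq^{m+1}}]$ and hence after $\otimes\,\Qp$; assembling these yields the claimed equality for $n+m \geq 1$. I do not expect a genuine obstacle here: the only point demanding care is bookkeeping the rings, since the Fitting-equals-characteristic statement holds over $\Lambda$ whereas the main conjecture input and the period comparison are available only after inverting $p$; as every subsequent operation (base change, quotient by $(\omega_n(\gamma_\gp-1),\omega_m(\gamma_\gq-1))$, and the passage from $\Theta_{n,m}^S$ to $\Theta_{n,m}$) is compatible with $\otimes \Qp$, this causes no difficulty.
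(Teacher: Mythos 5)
Your proposal is correct and follows essentially the same route as the paper: the paper's own proof simply reruns the argument of Theorem \ref{thm:main} with the conjectural main conjecture replaced by Theorem \ref{thm:IMC}, so that the chain of ideal equalities holds in $\Lambda\otimes\Qp$ and hence the final identity in $\Lambda_{n,m}\otimes\Qp$. Your extra bookkeeping (dropping the period-ratio hypothesis, base-changing Fitting ideals via Lemma \ref{lm:Fitting tensor}, and checking compatibility of the quotient, control theorem, and Theorem \ref{thm:stabilized} with $\otimes\,\Qp$) is exactly what the paper leaves implicit.
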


\begin{proof}
    The proof is the same as theorem \ref{thm:main}, but now the equality of ideals
    \begin{equation*}
    \mathrm{Fitt}_\Lambda(\mathcal{X}_{\mathrm{Gr}}(K_\infty,A)) = \mathrm{Char}_\Lambda(\mathcal{X}_{\mathrm{Gr}}(K_\infty,A)) = (L_p^{\mathrm{Hi}}(f_\alpha/K)) = (L_{p,\Delta}(f))
    \end{equation*}
    takes place inside $\Lambda \otimes \Qp$.
\end{proof}

\begin{remark}
    In the definite case, Skinner and Urban \cite{SU14} proved a two-variable Iwasawa main conjecture. Their result shows that a multiple of the Hida $p$-adic $L$-function appearing in this note with some Euler factors removed is a generator of the characteristic ideal of the dual $\Sigma$-primitive Selmer group (see \cite[Section 3.1.3]{SU14} for the definition). To prove the strong Mazur--Tate conjecture in this setting, one would need to compare the Fitting ideal of the dual Selmer group with the one of the dual $\Sigma$-primitive Selmer group.
\end{remark}

\bibliographystyle{acm}
\bibliography{main.bib}
\end{document}